\documentclass[a4paper, 12pt]{article}

\usepackage[dvipsnames]{xcolor}
\usepackage{amsmath, amsthm, mathtools}
\usepackage{amsfonts, mathrsfs}
\usepackage{amssymb}
\usepackage{paralist}
\usepackage{extarrows}
\usepackage{multirow}
\usepackage[colorlinks,linkcolor=blue, citecolor=Green]{hyperref}
\usepackage[numbers, sort]{natbib}

\usepackage[margin=1in]{geometry}

\usepackage{titlesec}
\titleformat{\subsection}{\normalfont\large\bfseries\itshape}{\thesubsection}{1em}{}

\usepackage{tikz}
\usetikzlibrary{calc}

\usepackage{caption, makecell, array}

\newcommand{\R}{\mathbb{R}}
\renewcommand{\S}{\mathbb{S}}

\numberwithin{equation}{section}
\newtheorem{theorem}{Theorem}[section]
\newtheorem*{theorem*}{Theorem}
\newtheorem{lemma}{Lemma}[section]
\newtheorem{proposition}{Proposition}[section]
\newtheorem{remark}{Remark}[section]
\newtheorem{corollary}{Corollary}[section]
\newtheorem{alphatheorem}{Theorem}[section]

\title{\bf Bivariate Hardy-Sobolev Inequality and Its Sharp Stability}

\author{
    Yingfang Zhang\thanks{Department of  Mathematical Sciences, Tsinghua University; E-mail: zhangyf22@mails.tsinghua.edu.cn},
    \quad  Xuexiu Zhong\thanks{School of Mathematical Sciences, South China Normal University. X. Zhong is supported by the NSFC (No.12271184) and Young Top-notch Talent Project of Guangdong Province (2024TQ08A725). E-mail: zhongxuexiu1989@163.com}, 
    \quad Wenming Zou\thanks{Department of Mathematical Sciences, Tsinghua University.  W. Zou is supported by National Key R\&D Program of China (Grant 2023YFA1010001) and NSFC (No. 12171265 and 12271184). E-mail: zou-wm@mail.tsinghua.edu.cn}
}

\begin{document}
\allowdisplaybreaks
\maketitle

\begin{abstract}
    This paper establishes a bivariate Hardy-Sobolev inequality. Let $\Omega \subset \R^N$ ($N \geq 3$) be an open domain, $s \in (0,2)$, $\alpha > 1$, $\beta > 1$ with $\alpha + \beta = 2^*(s)$, and $\kappa \in \R$. For any functions $u, v \in D_0^{1,2}(\Omega)$, we prove the inequality:
    \begin{multline*}
        \int_{\Omega} |\nabla u|^2 \, \mathrm{d}x + \int_{\Omega} |\nabla v|^2 \, \mathrm{d}x \\
        \ge S_{\alpha,\beta,\lambda,\mu}(\Omega) \left( \int_{\Omega} \Big( \lambda \frac{|u|^{2^*(s)}}{|x|^s} + \mu \frac{|v|^{2^*(s)}}{|x|^s} + 2^*(s) \kappa \frac{|u|^\alpha |v|^\beta}{|x|^s} \Big)\, \mathrm{d}x \right)^{\frac{2}{2^*(s)}}.
    \end{multline*}
    We derive the best constant $S_{\alpha,\beta,\lambda,\mu}(\Omega)$ and characterize the set of minimizers. Moreover, for $\Omega = \R^N$ and $\kappa > 0$, we obtain sharp stability results for nonnegative functions.

    \vskip0.12in

    \noindent{\bf 2010 MSC:}  35B38, 35J10, 35J15, 35J20, 35J60.
    \vskip0.12in

    \noindent{\bf Keywords:} Sharp constant; Hardy-Sobolev exponent;  Bivariate  Hardy-Sobolev inequality; sharp stability

\end{abstract}

\newpage
\section{Introduction}
\subsection{Hardy-Sobolev Inequality and Its Stability}
The celebrated Hardy-Sobolev inequality states that for any function $u \in D_0^{1,2}(\Omega)$,
\[
    \int_{\Omega} |\nabla u|^2\,\mathrm{d}x \ge \mu_s(\Omega) \left(\int_{\Omega} \frac{|u|^{2^*(s)}}{|x|^s}\,\mathrm{d}x\right)^{\frac{2}{2^*(s)}},
\]
where $\Omega$ is an open domain in $\R^N$ ($N \geq 3$), $s \in (0,2)$, and $2^*(s) = \frac{2(N-s)}{N-2}$.  {Here, $\mu_s(\Omega)$ denotes the best constant, defined by}
\[
    \mu_s(\Omega) := \inf\left\{ \frac{\int_{\Omega} |\nabla u|^2\,\mathrm{d}x}{\left(\int_{\Omega} \frac{|u|^{2^*(s)}}{|x|^s}\,\mathrm{d}x\right)^{\frac{2}{2^*(s)}}} : u \in D_0^{1,2}(\Omega) \setminus \{0\} \right\}.
\]

When the origin $0$ lies in the interior of $\Omega$, it is well-known that $\mu_s(\Omega) = \mu_s(\R^N)$, and $\mu_s(\Omega)$ is never attained unless the capacity of $\R^N \setminus \Omega$ is zero.  However, the situation differs significantly when $\partial\Omega$ has a cusp at $0$ or when $\partial\Omega$ is smooth at $0$, compared to the non-singular case. For detailed discussions, we refer to Egnell \cite{Egnell1992} and Ghoussoub and Kang \cite{Ghoussoub2004}.

In the case $\Omega = \R^N$, Lieb \cite{Lieb1983} established that the best constant is  given explicitly by
\[
    \mu_s(\R^N) = (N-2)(N-s) \left( \frac{1}{2-s} \omega_{N-1} \frac{\Gamma^2((N-s)/(2-s))}{\Gamma((2N-2s)/(2-s))} \right)^{\frac{2-s}{N-s}},
\]
where $\omega_{N-1}$ represents the volume of the unit sphere in $\R^N$. The set of minimizers for this inequality is characterized as
\begin{equation}\label{Uk}
    \mathcal{M} = \left\{ U(k,\tau) = k k_0 \left(1 + |\tau x|^{2-s}\right)^{-\frac{N-2}{2-s}} \tau^{\frac{N-2}{2}} : k \in \R \setminus \{0\}, \, \tau > 0 \right\},
\end{equation}
where $k_0$ is a normalization constant ensuring $\int_{\R^N} \frac{1}{|x|^s}|U(1,1)|^{2^*(s)}\,\mathrm{d}x = 1$.  The stability of this inequality was subsequently proved by R\u{a}dulescu, Smets, and Willem in \cite{Radulescu2002}:
\begin{alphatheorem}\label{HS-single}
    There exists a constant $L = L(N,s) > 0$ such that for every $u \in D_{0}^{1,2}(\R^N)$,
    \begin{equation}
        \int_{\R^N} |\nabla u|^2\,\mathrm{d}x - \mu_s(\R^N) \left( \int_{\R^N} \frac{|u|^{2^*(s)}}{|x|^s}\,\mathrm{d}x \right)^{\frac{2}{2^*(s)}} \ge L \inf_{v \in \mathcal{M}} \int_{\R^N} |\nabla u - \nabla v|^2\,\mathrm{d}x.
    \end{equation}
\end{alphatheorem}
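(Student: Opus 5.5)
The plan follows the Bianchi--Egnell strategy: first a local spectral analysis near the minimizing manifold $\mathcal{M}$, then a concentration-compactness argument to pass from local to global.

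\emph{Step 1 (nondegeneracy).} By Lieb's result the functions $U(k,\tau)$ in \eqref{Uk} are exactly the minimizers, so $U=U(1,1)$ solves $-\Delta U=\mu_s(\R^N)\,|x|^{-s}U^{2^*(s)-1}$ after normalization. I study the weighted eigenvalue problem
\[
-\Delta \varphi=\nu\, |x|^{-s}U^{2^*(s)-2}\varphi
\]
in $D^{1,2}_0(\R^N)$. The map $x\mapsto r^{(2-s)/2}$ (the substitution $r=|x|^{(2-s)/2}$ used in the Hardy--Sobolev literature) together with a decomposition into spherical harmonics reduces each angular mode to a radial ODE. In the radial sector this ODE is conjugate to a hypergeometric equation whose spectrum can be computed explicitly. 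The outcome should be:
\begin{itemize}
\item the first eigenvalue is $\nu_1=\mu_s(\R^N)$, with eigenfunction $U$;
\item the second eigenvalue is $\nu_2=(2^*(s)-1)\mu_s(\R^N)$, and its eigenspace is one-dimensional, spanned by $\partial_\tau U(1,\tau)|_{\tau=1}$;
\item all nonradial modes have eigenvalues strictly larger than $\nu_2$, because the angular term $\ell(\ell+N-2)/|x|^2$ raises the spectrum.
\end{itemize}
Since $s>0$ breaks translation invariance, translations produce no extra kernel directions. Hence the tangent space of $\mathcal{M}$ at $U$ is exactly $\mathrm{span}\{U,\partial_\tau U\}$, and there is a spectral gap $\nu_3>(2^*(s)-1)\mu_s(\R^N)$ on its orthogonal complement.

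\emph{Step 2 (local estimate).} Take $u$ with $d(u,\mathcal{M})=\inf_{v\in\mathcal{M}}\|\nabla(u-v)\|_2$ small compared with $\|\nabla u\|_2$. Closedness of $\mathcal{M}\cup\{0\}$ and local compactness of the two-parameter family show the infimum is attained at some $v=U(k,\tau)$. After rescaling we may take $\tau=1$. Write $u=v+w$; at the minimum, $w$ is $D^{1,2}$-orthogonal to $U$ and $\partial_\tau U$. Expanding,
\[
\int\frac{|v+w|^{2^*(s)}}{|x|^s}=\int\frac{|v|^{2^*(s)}}{|x|^s}+2^*(s)\int\frac{|v|^{2^*(s)-2}vw}{|x|^s}+\frac{2^*(s)(2^*(s)-1)}{2}\int\frac{|v|^{2^*(s)-2}w^2}{|x|^s}+o(\|\nabla w\|^2).
\]
\begin{itemize}
\item The first-order term vanishes by orthogonality to $U$.
\item The $o(\cdot)$ remainder is justified by elementary inequalities for $|a+b|^p$ combined with the Hardy--Sobolev inequality.
\item The spectral gap gives $\int |x|^{-s}|v|^{2^*(s)-2}w^2\le \nu_3^{-1}\|\nabla w\|^2$.
\item Applying concavity of $t\mapsto t^{2/2^*(s)}$ then yields
\[
\|\nabla u\|^2-\mu_s\Big(\int |x|^{-s}|u|^{2^*(s)}\Big)^{2/2^*(s)}\ge \Big(1-\tfrac{\nu_2}{\nu_3}\Big)\|\nabla w\|^2+o(\|\nabla w\|^2).
\]
\end{itemize}

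\emph{Step 3 (global).} Suppose the inequality fails. Then there is a sequence $u_n$ with $\|\nabla u_n\|=1$ whose deficit divided by $d(u_n,\mathcal{M})^2$ tends to $0$. Since $d(u_n,\mathcal{M})\le 1$, the deficit itself tends to $0$, so $u_n$ is a minimizing sequence for $\mu_s(\R^N)$. Lions' concentration-compactness, applied modulo the dilations $\tau$ (the only symmetry, because the weight $|x|^{-s}$ pins concentration to the origin or to infinity, both handled by scaling), gives $d(u_n,\mathcal{M})\to0$. This contradicts Step 2. If instead $d(u_n,\mathcal{M})$ stays bounded below, the quotient cannot tend to zero, since the deficit would have to vanish while $u_n$ is not close to $\mathcal{M}$, again contradicting compactness.

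The main obstacle is Step 1. I would try the change of variables $u(x)=|x|^{-(N-2)/2}\,\phi(\cdot)$ in Emden--Fowler coordinates, which turns the linearized operator into a Pöschl--Teller-type Schrödinger operator on the cylinder $\R\times\S^{N-1}$ with explicitly known bound states. Nondegeneracy is then read off from that known spectrum.
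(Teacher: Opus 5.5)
This is correct, and it is essentially the Bianchi--Egnell argument by which R\u{a}dulescu, Smets and Willem prove the statement (the paper only cites it). The three ingredients match: a spectral gap from the explicit linearized spectrum, which the paper quotes as Lemma~\ref{spectrum-HLS}; a second-order expansion near $\mathcal{M}$; and compactness of minimizing sequences modulo dilations.

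One caveat on Step 1: ``the angular term raises the spectrum'' only compares the $\ell\ge1$ modes with $\nu_1$, not with $\nu_2$. The same reasoning at $s=0$ would wrongly exclude the translation modes, which sit exactly at $\nu_2$. The strict gap must instead come from the P\"oschl--Teller computation you sketch at the end. There the mode $(\ell,n)$ has $\nu\propto\Lambda(\Lambda+1)$ with $\Lambda=n+\frac{2\ell+N-2}{2-s}$, so the lowest $\ell=1$ value $\Lambda=\frac{N}{2-s}$ exceeds the dilation value $\Lambda=\frac{N-s}{2-s}$ precisely because $s>0$.
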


The investigation of stability for functional inequalities traces back to the work of Brezis and Lieb \cite{Brezis1985}, with the first explicit result provided by Bianchi and Egnell \cite{Bianchi1991}. In recent years, there has been growing interest in the quantitative stability of various functional and geometric inequalities. Prominent examples include the isoperimetric inequality \cite{Aubin1976a, Figalli2010, Cianchi2011}, the Brunn-Minkowski inequality \cite{Figalli2017, Figalli2024}, Sobolev-type inequalities \cite{Figalli2022, Chen2013, Gazzola2010, Neumayer2020, Talenti1976, Figalli2013, Cianchi2006, Fusco2007, Zhang2025}, the Gagliardo-Nirenberg-Sobolev inequality \cite{Carlen2013, Bonforte2025}, and the Caffarelli-Kohn-Nirenberg inequality \cite{Caffarelli1984, Cazacu2024, Lam2017, Wei2024, Zhou2024}.

\vskip0.3in

\subsection{Bivariate Hardy-Sobolev Inequality and Its Stability}
Although the stability of single-variable inequalities has been extensively studied, the corresponding theory for bivariate cases remains relatively underdeveloped. A foundational result in this direction is the bivariate inequality \eqref{doubleHLS}, which was established along with an analysis of its optimal constant, extremal functions, and regularity properties in the second and the third authors' paper \cite{Zhong+Zou=1} and the second author's doctoral dissertation \cite[Section 7.6]{Zhong2015}.

We begin by establishing the following bivariate Hardy-Sobolev inequality:
\begin{theorem}\label{HS-double}
    Let $\Omega\subset \R^N$ be an open domain, $s\in(0,2),\, \kappa\in \R,\, \alpha>1,\,\beta>1,\,\alpha+\beta=2^*(s)$, then there exists $C=C(\alpha,\beta,\lambda,\mu,\kappa,N,s)>0$ such that for every $(u,v)\in \mathscr{D}(\Omega) $, there holds
    \begin{align}\label{doubleHLS}
       & \int_{\Omega} |\nabla u|^2\,\mathrm{d}x + \int_{\Omega} |\nabla v|^2\,\mathrm{d}x  \nonumber \\
       & \ge
        C \left( \int_{\Omega} \big( \lambda \frac{|u|^{2^*(s)}}{|x|^s} + \mu \frac{|v|^{2^*(s)}}{|x|^s} + 2^*(s)\kappa \frac{|u|^\alpha|v|^\beta}{|x|^s}\big)\,\mathrm{d}x\right)^{\frac{2}{2^*(s)}}.
    \end{align}
    Here
    \[
        \mathscr{D}(\Omega) := D_0^{1,2}(\Omega)\times D_0^{1,2}(\Omega).
    \]
\end{theorem}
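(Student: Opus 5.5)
The idea is to reduce \eqref{doubleHLS} to the scalar Hardy--Sobolev inequality with constant $\mu_s(\Omega)$, using Hölder's inequality on the coupling term and then the concavity of $t\mapsto t^{2/2^*(s)}$. We may assume the right-hand integral is positive; otherwise there is nothing to prove, since the left side is nonnegative. Set $p=2^*(s)$ and
\[
A=\int_\Omega \frac{|u|^{p}}{|x|^s}\,\mathrm{d}x,\qquad B=\int_\Omega \frac{|v|^{p}}{|x|^s}\,\mathrm{d}x .
\]

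\textbf{Step 1: the coupling term.} Apply Hölder's inequality with exponents $p/\alpha$ and $p/\beta$ to the measure $|x|^{-s}\mathrm{d}x$:
\[
\int_\Omega \frac{|u|^\alpha|v|^\beta}{|x|^s}\,\mathrm{d}x\le A^{\alpha/p}B^{\beta/p}.
\]
Then Young's inequality with the same exponents gives $A^{\alpha/p}B^{\beta/p}\le \frac{\alpha}{p}A+\frac{\beta}{p}B$. Using $|\lambda|,|\mu|,|\kappa|$ to cover arbitrary signs, the whole integrand satisfies
\[
\int_\Omega\Big(\lambda \frac{|u|^{p}}{|x|^s}+\mu \frac{|v|^{p}}{|x|^s}+p\kappa\frac{|u|^\alpha|v|^\beta}{|x|^s}\Big)\mathrm{d}x\le (|\lambda|+\alpha|\kappa|)A+(|\mu|+\beta|\kappa|)B\le K(A+B),
\]
where $K=\max\{|\lambda|+\alpha|\kappa|,\,|\mu|+\beta|\kappa|\}$.

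\textbf{Step 2: splitting the power.} Since $2/p<1$, the map $t\mapsto t^{2/p}$ is subadditive on $[0,\infty)$. Hence
\[
(K(A+B))^{2/p}\le K^{2/p}\big(A^{2/p}+B^{2/p}\big).
\]

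\textbf{Step 3: the scalar inequality.} By the Hardy--Sobolev inequality on $\Omega$, stated at the start of the introduction,
\[
A^{2/p}\le \mu_s(\Omega)^{-1}\int_\Omega|\nabla u|^2\,\mathrm{d}x,\qquad B^{2/p}\le \mu_s(\Omega)^{-1}\int_\Omega|\nabla v|^2\,\mathrm{d}x.
\]
We need $\mu_s(\Omega)\ge\mu_s(\R^N)>0$. This holds because extension by zero embeds $D_0^{1,2}(\Omega)$ into $D^{1,2}(\R^N)$, and Lieb's formula gives $\mu_s(\R^N)>0$. Combining the three steps yields \eqref{doubleHLS} with
\[
C=\mu_s(\R^N)\,K^{-2/p},
\]
provided $K>0$. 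If $K=0$, the right-hand side of \eqref{doubleHLS} vanishes and the inequality holds trivially.

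\textbf{Main obstacle.} There is essentially none. The only delicate point is sign bookkeeping when $\lambda,\mu,\kappa$ may be negative: the integral on the right may then be negative, and its $2/p$ power must be read as zero or excluded. Sharpening $C$ to the optimal constant $S_{\alpha,\beta,\lambda,\mu}(\Omega)$ is a separate task. For that one would restrict to $(u,v)=(\sigma w,\tau w)$ and optimize over the ratio $\sigma/\tau$. That reduction is not needed for existence of some positive $C$.
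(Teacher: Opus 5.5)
Your argument is correct, but it is not the paper's route. The paper gives no separate proof of Theorem~\ref{HS-double}; it follows from Theorem~\ref{mth1.2}, which computes the sharp constant.

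For $\kappa\le 0$ the paper does essentially what you do. The only differences are that it drops the nonpositive coupling term and uses $\max\{\lambda,\mu\}$ in place of your $K$. There the subadditivity step is sharp, since the extremals are semi-trivial.

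For $\kappa>0$ the paper avoids both of your lossy steps. For $u,v\neq 0$ it chooses $t>0$ with $\int_\Omega|v|^p|x|^{-s}\,\mathrm{d}x=t^p\int_\Omega|u|^p|x|^{-s}\,\mathrm{d}x$ and sets $w=v/t$. It then applies Young pointwise to $|u|^\alpha|w|^\beta$. Since $u$ and $w$ now have equal weighted $L^p$ norms, the whole denominator is bounded by $(\lambda+\mu t^p+\kappa p t^\beta)\int_\Omega |u|^p|x|^{-s}\,\mathrm{d}x$. Applying the scalar inequality to $u$ and $w$ over this common denominator gives $G(u,v)\ge g(t)\mu_s(\Omega)$ directly, with no subadditivity. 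This yields the optimal constant $\inf_t g(t)\,\mu_s(\Omega)$.

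In your version, the Hölder--Young bound with unequal norms and the splitting $(A+B)^{2/p}\le A^{2/p}+B^{2/p}$ each lose a constant, so you only get $\mu_s(\R^N)K^{-2/p}$. In return, your proof gains three things:
\begin{itemize}
\item it is shorter;
\item it does not depend on the signs of $\lambda,\mu,\kappa$, whereas the paper's formulas such as $g(0)=\lambda^{-2/p}$ presuppose $\lambda,\mu>0$;
\item it makes explicit, via $\mu_s(\Omega)\ge\mu_s(\R^N)$, that $C$ does not depend on $\Omega$, which the paper leaves implicit.
\end{itemize}

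One caution on your closing remark. Testing with $(\sigma w,\tau w)$ only gives the upper bound $S_{\alpha,\beta,\lambda,\mu}(\Omega)\le\inf_t g(t)\,\mu_s(\Omega)$. The matching lower bound is exactly where the equal-norm rescaling is needed.
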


We first  determine the best constant of \eqref{doubleHLS} and characterize its minimizers. Define
\begin{align}\label{best constant}
    & S_{\alpha,\beta,\lambda,\mu}(\Omega) \nonumber\\
    & :=
    \inf_{(u,v)\in \mathscr{D}(\Omega)\backslash\{{(0,0)}\}}
    \frac{\int_{\Omega}(|\nabla u|^2+|\nabla v|^2)\,\mathrm{d}x}{\left( \int_{\Omega} \big( \lambda \frac{|u|^{2^*(s)}}{|x|^s} + \mu \frac{|v|^{2^*(s)}}{|x|^s} + 2^*(s)\kappa \frac{|u|^\alpha|v|^\beta}{|x|^s}\big)\,\mathrm{d}x \right)^{\frac{2}{2^*(s)}}}.
\end{align}
\begin{theorem}\label{mth1.2}
    Let $\Omega$ be an open domain in $\R^N$, $0<s<2,\alpha>1,\beta>1,\alpha+\beta=2^*(s)$.
    \begin{itemize}
        \item If $\kappa \le 0$, then $S_{\alpha,\beta,\lambda,\mu}(\Omega) = (\max\{\lambda,\mu\})^{-2/2^*(s)}\mu_s(\Omega)$, and it can only be attained by the semi-trivial pairs:
        \[ \begin{cases}
            (\phi,0), & \text{if $\lambda>\mu$;}\\
            (0,\phi), & \text{if $\lambda<\mu$;}\\
            (0,\phi) \, \text{or}\, (\phi,0), & \text{if $\lambda=\mu$};
        \end{cases}\]
        where $\phi$ is an extremal of $\mu_s(\Omega)$.
        \item If $\kappa >0$, then $S_{\alpha,\beta,\lambda,\mu}(\Omega) = \inf_{t\in (0,\infty)}g(t)\mu_s(\Omega)$, where
        \[
            g(t) = \frac{1+t^2}{\big(\lambda + \mu t^{2^*(s)} + 2^*(s)\kappa t^\beta\big)^{\frac{2}{2^*(s)}}}.
        \]
        If $(\phi,\psi)\subset \mathscr{D}(\Omega)\backslash  {\{(0,0)\}}$ is an extremal of $S_{\lambda,\mu,\alpha,\beta}(\Omega)$, then $|\psi|=|t_0\phi|$, where $t_0\in[0,\infty]$ satisfies $g(t_0)=\inf_{t\in(0,+\infty)} g(t)$ and $\phi$ is an extremal of $\mu_s(\Omega)$ (If $t_0=\infty$, then $(\phi,\psi)=(0,\psi)$ and $\psi$ is an extremal of $\mu_s(\Omega)$).

        Define the set of minimizers of $g$ by
        \begin{equation}\label{eq:20251111-1518}
            \mathcal{S} := \left\{t_0 \in [0,\infty] : g(t_0) = \inf\limits_{t \in (0,\infty)} g(t)\right\}
        \end{equation}

        Then for $\Omega=\R^N$ with $\kappa>0$, the set of minimizers of \eqref{doubleHLS} can be characterized as
        \begin{equation}
            \begin{aligned}
                \mathcal{M}' ={}& \{(U(k,\tau),\pm t_0U(k,\tau)): k\in\R\backslash\{0\},\,\tau>0,\,  {t_0 \in \mathcal{S}}\} \\
            \end{aligned}
        \end{equation}
        where $U(k,\tau)$ is defined as in \eqref{Uk}, and when $t_0=\infty$, the expression means $(0,U(k,\tau))$.
    \end{itemize}
 \end{theorem}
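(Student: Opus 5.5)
The plan is to reduce the bivariate quotient to the scalar Hardy--Sobolev quotient through a pointwise comparison. The key tool is the vector-valued Hardy--Sobolev inequality in the spirit of the classical argument for coupled Schrödinger systems.

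Fix $(u,v)\in\mathscr{D}(\Omega)\setminus\{(0,0)\}$. Replacing $(u,v)$ by $(|u|,|v|)$ leaves the numerator unchanged and does not decrease the denominator, so we may assume $u,v\ge 0$ whenever the sign structure matters. Set $w=\sqrt{u^2+v^2}\in D_0^{1,2}(\Omega)$. The pointwise bound $|\nabla w|^2\le |\nabla u|^2+|\nabla v|^2$ holds a.e., by Cauchy--Schwarz applied to $\nabla w=(u\nabla u+v\nabla v)/w$.

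\textbf{The case $\kappa\le 0$.} Here the denominator is at most $\int \max\{\lambda,\mu\}(u^{2^*(s)}+v^{2^*(s)})|x|^{-s}$. Applying $\mu_s(\Omega)$ to $u$ and $v$ separately and using the concavity of $a\mapsto a^{2/2^*(s)}$, namely $(a+b)^{2/2^*(s)}\le a^{2/2^*(s)}+b^{2/2^*(s)}$ since $2/2^*(s)<1$, we get
\[
\|\nabla u\|_2^2+\|\nabla v\|_2^2\ge \mu_s(\Omega)\Big(\int\frac{u^{2^*(s)}+v^{2^*(s)}}{|x|^s}\Big)^{2/2^*(s)}\ge (\max\{\lambda,\mu\})^{-2/2^*(s)}\mu_s(\Omega)\,(\text{den})^{2/2^*(s)}.
\]
Semi-trivial pairs show this constant is sharp. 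For equality, the concavity step is strict unless one of $u,v$ vanishes. If $\kappa<0$, a nontrivial pair would also lose strictly in the coupling term. If $\kappa=0$, equality again forces one component to vanish, by strict subadditivity. The surviving component must then be an extremal of $\mu_s(\Omega)$ carrying the larger coefficient, which gives exactly the listed cases.

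\textbf{The case $\kappa>0$.} Write $F(a,b)=\lambda a^{2^*(s)}+\mu b^{2^*(s)}+2^*(s)\kappa a^\alpha b^\beta$. It is homogeneous of degree $2^*(s)$, so on the circle $a^2+b^2=1$ it has a maximum $M$. Substituting $b=ta$ shows $M^{-2/2^*(s)}=\inf_{t\in[0,\infty]} g(t)$, where the endpoints $t=0,\infty$ are included as limits. The infimum over $(0,\infty)$ is the same, since $g$ is continuous up to the endpoints. Then $F(u,v)\le M w^{2^*(s)}$, and so
\[
\int(|\nabla u|^2+|\nabla v|^2)\ge\int|\nabla w|^2\ge\mu_s(\Omega)\Big(\int\frac{w^{2^*(s)}}{|x|^s}\Big)^{2/2^*(s)}\ge \mu_s(\Omega)M^{-2/2^*(s)}(\text{den})^{2/2^*(s)}.
\]
For the reverse inequality, test with $(\phi,t\phi)$, where $\phi$ is a minimizing sequence for $\mu_s(\Omega)$, and optimize over $t$. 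This also proves Theorem~\ref{HS-double}.

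\textbf{Characterizing the extremals.} This is the main obstacle. Let $(\phi,\psi)$ be an extremal, and note that $(|\phi|,|\psi|)$ is then also an extremal. Equality forces three things. First, $w$ is an extremal of $\mu_s(\Omega)$. Second, $F(|\phi|,|\psi|)=Mw^{2^*(s)}$ a.e., so the ratio $|\psi|/|\phi|$ lies in $\mathcal S$ a.e. on $\{w>0\}$. Third, $|\nabla w|^2=|\nabla\phi|^2+|\nabla\psi|^2$ a.e., so $(|\phi|,|\psi|)$ has gradients parallel to $(|\phi|,|\psi|)$ itself; that is, the angle $\theta=\arctan(|\psi|/|\phi|)$ has $\nabla\theta=0$ a.e.

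To show the ratio is a single constant rather than merely taking values in the finite set $\mathcal S$, I would argue as follows. Since $\theta\in W^{1,1}_{loc}$ on $\{w>0\}$ with $\nabla\theta=0$, it is locally constant there. An extremal of $\mu_s$ can be taken positive on a connected domain (by the strong maximum principle for the Euler--Lagrange equation), so $\{w>0\}$ is connected and $\theta$ is constant. Hence $|\psi|=t_0|\phi|$ with $t_0\in\mathcal S$. Then $|\phi|=(1+t_0^2)^{-1/2}w$ is an extremal of $\mu_s(\Omega)$, and $t_0=\infty$ gives the semi-trivial case.

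For $\Omega=\R^N$, Lieb's classification~\eqref{Uk} gives $|\phi|=U(k,\tau)$. Since $\phi$ itself is an extremal of $\mu_s$ and is nonvanishing, it has constant sign, and the same holds for $\psi$. This yields $\mathcal M'$ with the $\pm$ sign. The converse inclusion, that every element of $\mathcal M'$ is a minimizer, follows by direct substitution.
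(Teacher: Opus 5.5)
Your proof is correct, and it takes a genuinely different route from the paper for $\kappa>0$. Your $\kappa\le 0$ argument and your use of Lieb's classification on $\R^N$ match the paper.

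\textbf{The best constant.} The paper works with a normalized minimizing sequence and splits into three cases, according to whether $\|u_n\|_{(s)}$ or $\|v_n\|_{(s)}$ degenerates. In the nondegenerate case it rescales $w_n=v_n/t_n$ so that $\|w_n\|_{(s)}=\|u_n\|_{(s)}$. It then bounds the coupling term by Young's inequality $|u|^\alpha|w|^\beta\le\frac{\alpha}{p}|u|^p+\frac{\beta}{p}|w|^p$ and applies $\mu_s(\Omega)$ to each component separately, which gives $G(u_n,v_n)\ge g(t_n)\mu_s(\Omega)$. Your vector-valued reduction via $w=\sqrt{u^2+v^2}$ and the pointwise bound $F(|u|,|v|)\le M w^{p}$ gives the lower bound for every pair at once. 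It needs no sequences and no case split, and it yields Theorem~\ref{HS-double} immediately.

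\textbf{The equality case.} Here your route costs more. Pointwise maximization of $F$ only tells you that $|\psi|/|\phi|\in\mathcal{S}$ a.e. So when $\mathcal{S}$ has several points, you need the extra step (equality in Cauchy--Schwarz, $\nabla\theta=0$, connectedness) to rule out the ratio jumping between different minimizers of $g$. In the paper this constancy comes for free. Equality in the pointwise Young inequality forces $|\psi|=t_0|\phi|$ a.e., with the single constant $t_0=\|\psi\|_{(s)}/\|\phi\|_{(s)}$, and no maximum principle is needed.

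\textbf{Making your extra step precise.} Your step is sound, but you should state exactly what it uses. A nontrivial nonnegative extremal $w$ is a weak supersolution, $-\Delta w\ge 0$, in the connected set $\Omega$. By the weak Harnack inequality, $\operatorname{ess\,inf}_K w>0$ on every compact $K\subset\Omega$. The singular weight $|x|^{-s}$ is harmless here, since only the sign of the right-hand side matters. It is this local positive lower bound, not merely $w>0$ a.e., that makes $\theta=\arctan(|\psi|/|\phi|)$ a $W^{1,2}_{\rm loc}(\Omega)$ function by the chain rule. Then $\nabla\theta=0$ a.e. forces $\theta$ to be constant.
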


 Next, we focus on the case $\Omega=\R^N$, $\kappa>0$ to establish sharp stability results for this bivariate Hardy-Sobolev inequality. Define the deficit functions $\delta(u)$ and $\delta(u,v)$ as follows:
\begin{equation}
    \delta(u) := \int_{\R^N} |\nabla u|^2\,\mathrm{d}x - \mu_s(\R^N)\left( \int_{\R^N} \frac{|u|^{2^*(s)}}{|x|^s}\,\mathrm{d}x\right)^{\frac 2{2^*(s)}}
\end{equation}
for $u\in D_{0}^{1,2}(\R^N)$, and
\begin{equation}
    \begin{aligned}
        &\delta(u,v) := \int_{\R^N} |\nabla u|^2\,\mathrm{d}x + \int_{\R^N} |\nabla v|^2\,\mathrm{d}x \\
        & \quad - S_{\alpha,\beta,\lambda,\mu}(\R^N) \left( \int_{\R^N} \big( \lambda \frac{|u|^{2^*(s)}}{|x|^s} + \mu \frac{|v|^{2^*(s)}}{|x|^s} + 2^*(s)\kappa \frac{|u|^\alpha|v|^\beta}{|x|^s}\big)\,\mathrm{d}x\right)^{\frac 2{2^*(s)}}
    \end{aligned}
\end{equation}
for $(u,v)\in\mathscr{D}(\R^N)$. Let $\|\cdot\|_r$ denote the $L^r(\R^N)$-norm for $1\leq r<\infty$, and define the set of degenerate minimizers of $g$ by
\[
    \tilde{\mathcal{S}} := \mathcal{S} \cap \big( \{t\in [0,\infty): g''(t) = 0\} \cup \{\infty: \tilde{g}''(0) = 0\}\big),
\]
where $\tilde{g}$ characterizes the property of $g$ at $\infty$:
\begin{equation}\label{symmetry}
    \tilde{g}(t) := g(1/t) = \frac{1+t^2}{(\mu + \lambda t^p + \kappa p t^\alpha)^{2/p}}.
\end{equation}

Then we have the following stability theorem:

\begin{theorem}\label{main thm}
    Assume $s\in(0,2)$, $\kappa>0$, $\alpha>1$, $\beta>1$, and $\alpha+\beta = 2^*(s)$. If $(\alpha,\beta,\lambda,\mu)\neq (2,2,2\kappa,2\kappa)$, then there exists a constant $C=C(\lambda,\mu,\kappa,\alpha,\beta,N,s)>0$ such that for all nonnegative functions $(u,v)\in \mathscr{D}(\R^N)\setminus \{(0,0)\}$,
    \[
        \inf_{(U_0,V_0)\in \mathcal{M}'} \frac{\|\nabla(u-U_0)\|_2^2 + \|\nabla(v-V_0)\|_2^2}{\|\nabla u\|_2^2 + \|\nabla v\|_2^2}
        \leq C \left(\frac{\delta(u,v)}{\|\nabla u\|_2^2 + \|\nabla v\|_2^2}\right)^{\iota},
    \]
    where the exponent $\iota$ is determined as follows:

    \begin{description}
    \item[\textbf{(I)}] $\iota = 1$ if $\tilde{\mathcal{S}} = \emptyset$, i.e. all the minimizers of $g$ are nondegenerate;
    \item[\textbf{(II)}] $\iota = 1/2$ if $\tilde{\mathcal{S}} \neq \emptyset$, i.e. $g$ has a degenerate minimizer.
    \end{description}

    More precisely, \emph{\textbf{(II)}} includes the following four scenarios: [Table 1 here]
    \begin{table}[h]
        \centering
        \caption{Summary of cases in Theorem \ref{main thm} leading to $\iota = 1/2$}
        \label{tab:theorem1-3-cases}
        \begin{tabular}{|c|c|c|c|}
            \hline

            \textbf{Case}
            & \textbf{Parameter conditions}
            & $\mathcal{S}$
            & $\tilde{\mathcal{S}}$ \\

            \hline

            \textbf{(II.1)}
            & \thead{ $\alpha, \beta \in (1,2)$, \\
                $\lambda = \frac{2\kappa\alpha}{p-2} \left( \frac{2-\beta}{2-\alpha} \right)^{\frac{\beta}{2}-1}$, \\
                $\mu = \frac{2\kappa\beta}{p-2} \left( \frac{2-\beta}{2-\alpha} \right)^{1-\frac{\alpha}{2}}$ }
            & $ \left\{ \sqrt{\frac{2-\beta}{2-\alpha}} \right\} $
            & $\left\{ \sqrt{\frac{2-\beta}{2-\alpha}} \right\} $ \\

            \hline
            
            \textbf{(II.2)}
            & $\beta = 2 \leq \alpha, \, \mu < \lambda = 2\kappa$
            & $\{0\}$
            & $\{0\}$ \\

            \hline
            
            \textbf{(II.3)}
            & $\alpha = 2 \leq \beta,\, \lambda < \mu = 2\kappa$
            & $\{\infty\}$
            & $\{\infty\}$ \\
            
            \hline
            
            \textbf{(II.4)}
            & \thead{ $\min\{\alpha,\beta\} = 2< \max\{\alpha,\beta\}$, \\ $\lambda = \mu = 2\kappa$ }
            & $\{0, \infty\}$
            & \thead{ $\{0\}$, if $\beta=2$, \\ $\{\infty\}$, if $\alpha=2$ } \\

            \hline
        \end{tabular}
    \end{table}

    Moreover, the exponent $\iota$ is sharp in all cases (see Remark \ref{rmk11}).
\end{theorem}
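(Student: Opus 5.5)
We reduce the bivariate deficit to the single-variable deficit of Theorem \ref{HS-single} plus a one-dimensional deficit in the ``ratio'' variable, governed by $g$. Write $p=2^*(s)$ and normalize $\|\nabla u\|_2^2+\|\nabla v\|_2^2=1$; both sides are homogeneous of degree zero. Set $a=\|\nabla u\|_2$, $b=\|\nabla v\|_2$ and $t=b/a\in[0,\infty]$.

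\textbf{Step 1: deficit splitting.} For nonnegative $u,v$, H\"older with weight $|x|^{-s}\,\mathrm{d}x$ gives
\[
\int \frac{u^\alpha v^\beta}{|x|^s} \le \Big(\int\frac{u^p}{|x|^s}\Big)^{\alpha/p}\Big(\int\frac{v^p}{|x|^s}\Big)^{\beta/p}.
\]
Write $A=(\int u^p|x|^{-s})^{1/p}$ and $B=(\int v^p|x|^{-s})^{1/p}$, so that $\mu_s A^2=a^2-\delta(u)$ and similarly for $B$. Then
\[
S_{\alpha,\beta,\lambda,\mu}\big(\lambda A^p+\mu B^p+p\kappa A^\alpha B^\beta\big)^{2/p}=S\,A^2\big(\lambda+\mu\tau^p+p\kappa\tau^\beta\big)^{2/p},\qquad \tau=B/A .
\]
Using $S=\mu_s\inf g$, we obtain
\[
\delta(u,v)\ \ge\ \big[\,(a^2+b^2)-\mu_s(A^2+B^2)\,\big]\;+\;\mu_s A^2(1+\tau^2)\Big(1-\frac{\inf g}{g(\tau)}\Big)\;+\;\text{(H\"older defect)}.
\]
The first bracket equals $\delta(u)+\delta(v)$. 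Hence
\[
\delta(u,v)\gtrsim \delta(u)+\delta(v)+\big(g(\tau)-\inf g\big)+H(u,v),
\]
where $H\ge 0$ is the H\"older defect, and everything is bounded on the normalized set.

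\textbf{Step 2: single-function closeness.} Theorem \ref{HS-single} gives $\tau_u,\tau_v>0$ and $k_u,k_v$ with
\[
\|\nabla(u-U(k_u,\tau_u))\|_2^2\le L^{-1}\delta(u),
\]
and likewise for $v$. The two bubbles must then be aligned, $\tau_u\approx\tau_v$. For this we use that the H\"older defect quantitatively controls the proportionality of $u$ and $v$. In the perturbative regime one can replace $H$ by a local expansion. Write $v=tu+w$ with $w$ orthogonal, in $D^{1,2}$, to the tangent space at the bubble. The second variation of the full bivariate functional at a minimizer $(U,t_0U)$ is then coercive on such $w$, giving a bound by $\delta(u,v)$ linearly. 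To justify this, we prove a nondegeneracy statement for the linearized system at $(U,t_0U)$. Its kernel is spanned by the dilation and scaling directions together with the ratio direction $(0,U)$, the latter being degenerate exactly when $g''(t_0)=0$. The nondegeneracy of $U$ for the single Hardy--Sobolev equation (Lemma-type input behind Theorem \ref{HS-single}) is used for the first two. The excluded case $(2,2,2\kappa,2\kappa)$ is exactly where $g$ is constant, so $\mathcal S$ is a continuum and no such control holds.

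\textbf{Step 3: the ratio variable.} This is where $\iota$ is decided. Near $t_0\in\mathcal S$ we expand $g(\tau)-g(t_0)$. If $g''(t_0)>0$, it is $\gtrsim|\tau-t_0|^2$, and the distance to $\mathcal M'$ contributes $|\tau-t_0|^2$, so $\iota=1$. In the degenerate cases (II.1)--(II.4) we must compute the next nonzero term of $g$, respectively of $\tilde g$ at $0$ in case (II.3). In case (II.2), for example, $g(t)-g(0)$ behaves like $c\,t^{\min(4,\alpha')}$ with a positive leading coefficient. In case (II.1) one checks $g'''(t_0)=0$ and $g''''(t_0)>0$. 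In each case this yields $g(\tau)-\inf g\gtrsim|\tau-t_0|^4$, hence $|\tau-t_0|^2\lesssim\delta^{1/2}$ and $\iota=1/2$.

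\textbf{Step 4: compactness outside the perturbative regime.} Far from $\mathcal M'$ we argue by contradiction. Take a normalized sequence with $\delta(u_n,v_n)\to0$. By concentration-compactness for $\mu_s$, modulo dilations, it converges to a minimizer, which reduces everything to the local regime. The cases $t_0\in\{0,\infty\}$ need separate care, since one component is small rather than proportional. There we use $\beta\ge2$ (resp.\ $\alpha\ge2$) so that the coupling term is of order at least $b^2$.

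\textbf{Sharpness.} We take $(U,\tau U)$ with $\tau\to t_0$. Its deficit is $\asymp g(\tau)-g(t_0)\sim|\tau-t_0|^4$, while its distance is $\sim|\tau-t_0|^2$, so $\iota=1/2$ cannot be improved.

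\textbf{Main obstacle.} The main difficulty is Steps 2--3 in the degenerate cases. There the quadratic form is only semidefinite along the ratio direction, so we must couple the higher-order (quartic) expansion of the functional with the coercive estimate on the orthogonal complement. Cross terms between $w$ and $\tau-t_0$ of order $|w|\,|\tau-t_0|^2$ must be absorbed, using Young's inequality and the quartic lower bound.
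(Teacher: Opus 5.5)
Your outline has the same architecture as the paper's proof. It has a compactness step (stability of each component separately, then alignment of the two bubbles, as in Theorem \ref{lem 222}) and a coercivity estimate on the complement of a common bubble. It ends with a second- or fourth-order Taylor expansion of $g$ at the relevant minimizer (Propositions \ref{prop g(t)} and \ref{prop g(0)}), and uses the same sharpness examples.

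What is genuinely different is your Step 1. The splitting $\delta(u,v)\gtrsim\delta(u)+\delta(v)+(g(\tau)-\inf g)+H$ is sharper than the bound $\delta(u,v)\ge\delta(u)+\delta(v)$ used in the proof of Theorem \ref{lem 222}. It already yields $g(\tau)-\inf g\lesssim\delta$ on the normalized set, i.e.\ the ratio estimate of your Step 3, without expanding the functional at all. Pushed through, it would even make the bivariate linearization unnecessary. Write $u=U(k_u,\tau_u)+\phi$ and $v=U(k_v,\tau_v)+\psi$ as given by Theorem \ref{HS-single}, with each remainder orthogonal to its bubble and to its $\partial_\tau$-derivative, and set $\theta=\log(\tau_v/\tau_u)$. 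Then the remainders change $H$ only by $O(\theta^2\delta^{1/2}+\delta)$. For $t_0\in(0,\infty)$, the pure-bubble part of $H$ is $\gtrsim\theta^2$, because the coupling integral of two normalized bubbles has a nondegenerate maximum at $\theta=0$. Hence $\theta^2\lesssim\delta$, and the distance bound follows from scalar computations.

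You do not use this; instead you fall back on the linearization, whose coercivity you only assert. In the paper that coercivity is the real work (proof of \eqref{0 est of 2nd term}). Both components are expanded in eigenfunctions of the same operator $L$, and the constraint $r+t_0q=0$ from \eqref{perp} removes the dilation direction. The form is then checked mode by mode using \eqref{g'=0}, \eqref{g''>=0}, $\Lambda_2=(p-1)\mu_s$ and $\Lambda_3>(p-1)\mu_s$.

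The obstacle you single out does not arise in the paper's setup, and your remedy would not handle it if it did. The paper expands around $(aw,bw)$, where $a,b$ are the projections of $u,v$ on one bubble $w$. Since $F(u,v)=\lambda u^p+\mu v^p+\kappa p\,u^\alpha v^\beta$ is $p$-homogeneous, $\nabla F(aw,bw)=w^{p-1}\nabla F(a,b)$. So every term linear in $(u-aw,v-bw)$ vanishes for \emph{every} ratio $b/a$. The ratio deficit $\mu_s\big(g(b/a)-g(t_0)\big)(\lambda a^p+\mu b^p+\kappa p\,a^\alpha b^\beta)^{2/p}$ then separates exactly in \eqref{est*}. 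Replacing $(a/\sigma,b/\sigma)$ by $(1,t_0)$ in the quadratic coefficients costs only $O(\epsilon_1^{1/2})$ times the form, which the spectral gap absorbs. So cross terms of order $X|\tau-t_0|^2$, with $X$ the size of the orthogonal part, never occur.

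Had one occurred with an $O(1)$ coefficient $K$, Young's inequality would not dispose of it. With $Y=|\tau-t_0|^2$, the form $cX^2-KXY+c'Y^2$ dominates $X^2+Y^2$ only if $K^2<4cc'$, and nothing in your argument guarantees this. What does need care, and is missing from your sketch, is the Taylor remainder when $\min\{\alpha,\beta\}<2$, where $F$ is not $C^3$. The paper handles it with Lemmas \ref{lem 1} and \ref{Lem2}, getting remainders whose powers of $\gamma/\sigma^2$ all exceed $1$.
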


Finally, through a transformation, we derive sharp stability for a class of bivariate Hardy-Sobolev inequalities:
\begin{corollary}\label{main_cor}
    Assume $ \ell\in(0,1),\,s\in (0,2)$, $\kappa >0,\, \alpha>1,\,\beta>1,\, \alpha+\beta=2^*(s)$. Then for any $u,v\in \mathscr{U}$, where
    \[ \mathscr{U}:= \{u \in L^{2^*(s)}(\R^N, |x|^{-(N-s)(1- \ell)-s}\,\mathrm{d}x):\nabla u \in L^2(\R^N,|x|^{-(N-2)(1- \ell)}\,\mathrm{d}x)\},\]
    there holds
    \begin{equation}\label{class1}
        \begin{aligned}
            & \int_{\R^N}|x|^{-(N-2)(1- {\ell})}(|\nabla u|^2+|\nabla v|^2)\,\mathrm{d}x    \\
            &\ge{} S_{\alpha,\beta,\lambda,\mu}(\R^N) \cdot  {\ell}^{\frac 2{2^*(s)}+1} \\
            & \quad \cdot\left(\int_{\R^N} |x|^{-(N-s)(1- {\ell})-s}(\lambda |u|^{2^*(s)}+\mu|v|^{2^*(s)}+2^*(s)\kappa |u|^\alpha|v|^\beta)\,\mathrm{d}x \right)^{\frac{2}{2^*(s)}},
        \end{aligned}
    \end{equation}
    and equality holds if and only if $(u,v)=(w,t_0w)$, where $g(t_0) = \inf_{t\in(0,\infty)}g(t)$ and $w\in \mathcal{M}_{ {\ell}}$ ($(u,v)=(0,w)$ if $t_0=\infty$):
    \[
        \mathcal{M}_{ {\ell}} := \{c(1+|\lambda x|^{(2-s) {\ell}})^{-\frac{N-2}{2-s}}\lambda^{\frac{(N-2) {\ell}}{2}}:\lambda>0,\, c\in\R\}
    .\]
    Denote by $\mathcal{M}'_{ {\ell}}$ the set of minimizers of \eqref{class1}, $\delta_{ {\ell}}(u,v)$ the deficit of inequality \eqref{class1}, then there exists $C=C(\alpha,\beta,\kappa,\lambda,\mu,s,N, {\ell})>0$ such that for any nonnegative $u,v\in \mathscr{U}$, there holds
    \begin{multline*}
        \inf_{(U_0,V_0)\in \mathcal{M}'_{ {\ell}}} \frac{\int_{\R^N} |x|^{-(N-2)(1- {\ell})}(|\nabla(u-U_0)|^2 + |\nabla(v-V_0)|^2)\,\mathrm{d}x}{\int_{\R^N} |x|^{-(N-2)( {\ell}-1)}(|\nabla u|^2+|\nabla v|^2)\,\mathrm{d}x}\\
        \le C \left(\frac{\delta_{ {\ell}}(u,v)}{\int_{\R^N} |x|^{-(N-2)( {\ell}-1)}(|\nabla u|^2+|\nabla v|^2)\,\mathrm{d}x}\right)^\iota,
    \end{multline*}
    where $\iota=1$ if \emph{\textbf{(I)}} occurs, and $\iota=1/2$ if \emph{\textbf{(II)}} occurs.
\end{corollary}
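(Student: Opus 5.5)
The plan is to reduce Corollary \ref{main_cor} to Theorem \ref{mth1.2} and Theorem \ref{main thm} through a radial change of variables that rescales the modulus $|x|$. Write $x = r\sigma$ with $r>0$, $\sigma\in\S^{N-1}$. For $u\in\mathscr{U}$ define $w$ by
\[
u(x) = w\big(|x|^{\ell-1}x\big), \qquad\text{equivalently}\quad w(y)=u\big(|y|^{1/\ell-1}y\big),
\]
so that $\rho=r^{\ell}$ and the angular variable is unchanged. The proof then has three steps.

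\textbf{Step 1: transform the three integrals.} With $\mathrm{d}x = r^{N-1}\mathrm{d}r\,\mathrm{d}\sigma$ and $r=\rho^{1/\ell}$, the weighted potential term becomes
\[
\int |x|^{-(N-s)(1-\ell)-s}F(u)\,\mathrm{d}x = \ell^{-1}\int |y|^{-s}F(w)\,\mathrm{d}y,
\]
for $F(u,v)=\lambda|u|^{p}+\mu|v|^p+p\kappa|u|^\alpha|v|^\beta$, $p=2^*(s)$. For the gradient, $\partial_r u=\ell r^{\ell-1}\partial_\rho w$, while the tangential part is $r^{-1}\nabla_\sigma u=r^{\ell-1}\rho^{-1}\nabla_\sigma w$. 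This gives
\[
|\nabla u|^2 = r^{2\ell-2}\big(\ell^2|\partial_\rho w|^2+\rho^{-2}|\nabla_\sigma w|^2\big),
\]
so
\[
\int|x|^{-(N-2)(1-\ell)}|\nabla u|^2\,\mathrm{d}x = \ell^{-1}\int\big(\ell^2|\partial_\rho w|^2+\rho^{-2}|\nabla_\sigma w|^2\big)\,\mathrm{d}y .
\]
Here I must check that the exponents in $r$ cancel, which is what the chosen weights are designed for.

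\textbf{Step 2: the inequality and its extremals.} Because $\ell<1$, the gradient quantity above is at least $\ell\int|\nabla w|^2\,\mathrm{d}y$, with equality iff $\nabla_\sigma w\equiv 0$, i.e.\ $w$ is radial. Applying Theorem \ref{mth1.2} to $(w_1,w_2)$ on $\R^N$ gives \eqref{class1} with constant $S\,\ell\cdot\ell^{2/p}$, matching the stated factor $\ell^{2/p+1}$. Equality forces two things: $(w_1,w_2)\in\mathcal{M}'$, and both components radial. The extremals $U(k,\tau)$ are radial, so this is consistent. Pulling back $U$ via $\rho=r^\ell$ produces exactly $\mathcal{M}_\ell$.

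\textbf{Step 3: stability.} For nonnegative $(u,v)$, I split the deficit as
\[
\delta_\ell(u,v) = \ell^{-1}\Big[(1-\ell^2)\|\nabla_\sigma\text{-part}\|^2 \Big] + \ell\,\delta(w_1,w_2),
\]
where the bracket is the angular energy $\int\rho^{-2}(|\nabla_\sigma w_1|^2+|\nabla_\sigma w_2|^2)$. Thus $\delta_\ell$ controls both the angular energy $A$ and $\delta(w_1,w_2)$. The distance to $\mathcal{M}'_\ell$ in the weighted norm equals, up to the factor $\ell^{-1}$, the quantity
\[
\int\big(\ell^2|\partial_\rho(w-W)|^2+\rho^{-2}|\nabla_\sigma w|^2\big)
\]
for radial $W\in\mathcal{M}'$. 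This is bounded by $\|\nabla(w-W)\|_2^2$ up to constants depending on $\ell$, since the two quadratic forms are equivalent with constants $\ell^2$ and $1$. Theorem \ref{main thm} then bounds $\inf_W\|\nabla(w-W)\|_2^2/\|\nabla w\|_2^2$ by $C(\delta/\|\nabla w\|^2)^\iota$, and $\delta\le\ell^{-1}\delta_\ell$. The norms $\|\nabla w\|^2$ and the weighted norm of $u$ are comparable. Finally, since the relative deficit is bounded by $1$ and $\iota\le1$, there is no extra loss.

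\textbf{Main obstacle.} The delicate point is the rigorous justification of the change of variables on $\mathscr{U}$: showing that $u\mapsto w$ is a bijection onto $D^{1,2}(\R^N)$, together with a density argument near the origin and infinity. I would handle this by first working with smooth compactly supported functions away from $0$ and then closing by density. Nonnegativity is preserved by the transformation, so the hypothesis of Theorem \ref{main thm} is met.
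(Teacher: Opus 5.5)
Your proposal is correct and follows the paper's route. The paper uses the same radial change of variables $r\mapsto r^{1/\ell}$, normalized as $\tilde u(r\theta)=\ell^{1/2}u(r^{1/\ell}\theta)$ (i.e.\ $\ell^{1/2}$ times your $w$), together with the comparison $\ell^2\int|x|^{-(N-2)(1-\ell)}|\nabla u|^2\,\mathrm{d}x\le\int|\nabla\tilde u|^2\,\mathrm{d}x\le\int|x|^{-(N-2)(1-\ell)}|\nabla u|^2\,\mathrm{d}x$ (equality on the right iff $u$ is radial) and the bound $\delta(\tilde u,\tilde v)\le\delta_\ell(u,v)$, before invoking Theorem \ref{main thm}. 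Your exact splitting of $\delta_\ell$ simply keeps the angular energy that the paper drops, and the paper does not address the justification of the change of variables on $\mathscr{U}$ that you flag as the main obstacle.
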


\subsection{Structure of the Paper}
In Section \ref{const}, we give the proof of Theorem \ref{mth1.2}. Then, in Section \ref{sec2}, we state and prove some basic tools that are needed for the proof of stabilities, including the spectrum analysis of the Hardy-Sobolev inequality, properties of $g(t)$ and some elementary inequalities. In Section \ref{sec3}, we state the qualitative stability result. The proofs of Theorem \ref{main thm} and Corollary \ref{main_cor} are presented in Section \ref{sec4}.

\section{The Best Constant and Minimizers}\label{const}

From now on, for simplicity, we write $p=2^*(s):=\frac{2(N-s)}{N-2}$. By  {$N\geq 3$ and $s<2$}, we always have $p>2$.

For $(u,v)\in\mathscr{D}(\Omega)\backslash\{ {(0,0)}\}$, define
\[
    G(u,v) := \frac{\int_{\Omega}(|\nabla u|^2+|\nabla v|^2)\,\mathrm{d}x}{\left( \int_{\Omega} \frac{1}{|x|^s}\big( \lambda |u|^{p} + \mu |v|^{p} + \kappa p|u|^\alpha|v|^\beta\big)\,\mathrm{d}x \right)^{2/p}},
\]
then $S_{\alpha,\beta,\lambda,\mu}(\Omega)=\inf_{(u,v)\in\mathscr{D}(\Omega)\backslash\{ {(0,0)}\}} G(u,v)$.

\begin{proof}[\bf Proof of Theorem \ref{mth1.2}]
    If $\kappa\le 0$, for any $(u,v)\in \mathscr{D}(\Omega)\backslash \{ {(0,0)}\}$,
    \begin{align*}
        G(u,v) \ge{}& \frac{\int_{\Omega} (|\nabla u|^2 + |\nabla v|^2) \,\mathrm{d}x}{\left(\int_{\Omega} \frac 1{|x|^s} \big(\lambda|u|^p + \mu |v|^p\big)\,\mathrm{d}x\right)^{2/p}} \\
        \ge{}& (\max\{\lambda,\mu\})^{-2/p} \frac{\int_{\Omega} (|\nabla u|^2 + |\nabla v|^2) \,\mathrm{d}x}{\left(\int_{\Omega} \frac 1{|x|^s} \big(|u|^p + |v|^p\big)\,\mathrm{d}x\right)^{2/p}} \\
        \ge{}& (\max\{\lambda,\mu\})^{-2/p}\mu_s(\Omega) \frac{ \left(\int_\Omega \frac{|u|^p}{|x|^s}\,\mathrm{d}x\right)^{2/p} + \left(\int_\Omega \frac{|v|^p}{|x|^s}\,\mathrm{d}x\right)^{2/p}}{\left(\int_{\Omega} \big(\frac{|u|^p}{|x|^s} + \frac{|v|^p}{|x|^s}\big)\,\mathrm{d}x\right)^{2/p}} \\
        \ge{}& (\max\{\lambda,\mu\})^{-2/p}\mu_s(\Omega).
    \end{align*}
    In the last step, we use the following elementary inequality: for $0<\iota<1$,
    \[ x^\iota + y^\iota \ge (x+y)^{\iota}, \quad \forall\, x,y\ge 0,\]
    and equality holds if and only if one of $x,y$ is zero. By taking the infimum of $G(u,v)$ over $\mathscr{D}(\Omega)\backslash\{ {(0,0)}\}$, we obtain that $S_{\alpha,\beta,\lambda,\mu}(\Omega) \ge (\max\{\lambda,\mu\})^{-2/p}\mu_s(\Omega)$. Since
    \[ \inf_{u\in D_0^{1,2}(\Omega)\backslash\{0\}} G(u,0) = \lambda^{-2/p} \inf_{u\in D_0^{1,2}(\Omega)\backslash\{0\}} \frac{\int_\Omega |\nabla u|^2 \,\mathrm{d}x}{\left(\int_\Omega \frac{1}{|x|^s}|u|^p\,\mathrm{d}x\right)^{2/p}} = \lambda^{-2/p}\mu_s(\Omega),\]
    \[ \inf_{v\in D_0^{1,2}(\Omega)\backslash\{0\}} G(0,v) = \mu^{-2/p}\inf_{v\in D_0^{1,2}(\Omega)\backslash\{0\}} \frac{\int_\Omega |\nabla v|^2 \,\mathrm{d}x}{\left(\int_\Omega \frac{1}{|x|^s}|v|^p\,\mathrm{d}x\right)^{2/p}} = \mu^{-2/p}\mu_s(\Omega),\]
    we derive that $S_{\alpha,\beta,\lambda,\mu}(\Omega)=(\max\{\lambda,\mu\})^{-2/p}\mu_s(\Omega)$. Moreover, by the process of the proof, $S_{\alpha,\beta,\lambda,\mu}(\Omega)$ is only achieved by $(\phi,0)$ if $\lambda>\mu$ (resp. $(0,\phi)$ if $\lambda<\mu$), where $\phi$ is an extremal of $\mu_s(\Omega)$.

    If $\kappa > 0$. For any $u\neq 0, v\neq 0$ and $t\geq 0$,  we have
    \begin{equation}\label{2014-11-28-xbue3}
        G(u,tv)=\frac{\int_\Omega \big(|\nabla u|^2+|\nabla v|^2t^2\big)\,\mathrm{d}x}{\Big(\int_\Omega \frac{1}{|x|^s}\big(\lambda |u|^{p} + \mu |v|^{p}t^p + \kappa p|u|^\alpha |v|^\beta t^\beta\big)\, \mathrm{d}x\Big)^{2/p}}
    \end{equation}
    Hence,
    \begin{align*}
        G(u,tu) ={}& \frac{1+t^2}{\left(\lambda+\mu t^{p}+\kappa pt^\beta\right)^{2/p}} \frac{\int_\Omega |\nabla u|^2 \,\mathrm{d}x}{\left(\int_\Omega \frac{|u|^{p}}{|x|^s}\, \mathrm{d}x\right)^{2/p}}\\
        ={}&g(t)\frac{\int_\Omega |\nabla u|^2 \,\mathrm{d}x}{\left(\int_\Omega \frac{1}{|x|^s}|u|^p \,\mathrm{d}x\right)^{2/p}}.
    \end{align*}
    Notice that
    \[ \lim_{t\to \infty} G(u,tu) = g(\infty) \frac{\int_\Omega |\nabla u|^2 \,\mathrm{d}x}{\left(\int_\Omega \frac{|u|^{p}}{|x|^s}\, \mathrm{d}x\right)^{2/p}} = G(0,u).\]
    Thus,
    \begin{align}\label{2014-11-28-xbue8}
        S_{\alpha,\beta,\lambda,\mu}(\Omega)={}&\inf_{(u,v)\in \mathscr{D}(\Omega)\backslash\{{(0,0)}\}}G(u,v)\nonumber\\
        \leq&\inf_{u\in D_{0}^{1,2}(\Omega){\backslash \{0\}}}\inf_{t\in (0,+\infty)}G(u,tu)\nonumber\\
        ={}&\inf_{t\in (0,+\infty)}g(t) \inf_{u\in D_{0}^{1,2}(\Omega)\backslash \{0\}}\frac{\int_\Omega |\nabla u|^2 \,\mathrm{d}x}{\left(\int_\Omega \frac{1}{|x|^s}|u|^p\,\mathrm{d}x\right)^{2/p}}\nonumber\\
        ={}&\inf_{t\in (0,+\infty)}g(t) \mu_s(\Omega).
    \end{align}

    To show $S_{\alpha,\beta,\lambda,\mu}(\Omega) = \inf_{t\in (0,+\infty)}g(t) \mu_s(\Omega)$, we only need to prove the reverse inequality. Let $\{(u_n,v_n)\}$ be a minimizing sequence of $S_{\alpha,\beta,\lambda,\mu}(\Omega)$. Since $G(u,v)=G(tu,tv)$ for all $t>0$, without loss of generality, we may assume that
    $$\int_\Omega \frac{1}{|x|^s}\big(|u_n|^{p}+ |v_n|^{p}\big)\,\mathrm{d}x=1,\quad \forall\, n\ge 1,$$
    and $$G(u_n,v_n)=S_{\alpha,\beta,\lambda,\mu}(\Omega)+o(1).$$

    \noindent\textbf{
        Case 1. $\liminf\limits_{n\rightarrow +\infty}\int_\Omega \frac{|u_n|^{p}}{|x|^s}\, \mathrm{d}x=0$.
    }
    Since $\{v_n\}$ is bounded in $L^{p}(\Omega,\frac{\mathrm{d}x}{|x|^s})$, by H\"older inequality, up to a subsequence,
    \[ 
        \int_\Omega \big(\lambda \frac{|u_n|^{p}}{|x|^s}+\mu \frac{|v_n|^{p}}{|x|^s}+\kappa p\frac{|u_n|^\alpha |v_n|^\beta}{|x|^s}\big)\,\mathrm{d}x=\mu\int_\Omega \frac{|v_n|^{p}}{|x|^s}\,\mathrm{d}x+o(1)=\mu+o(1).
    \]
    Hence,
    \[
        \lim_{n\rightarrow \infty}G(u_n,v_n)\geq \lim_{n\rightarrow \infty} G(0,v_n)
    \]
    and $(0,v_n)$ is also a minimizing sequence of $S_{\alpha,\beta,\lambda,\mu}(\Omega)$. As a result,
    \begin{align*}
        S_{\alpha,\beta,\lambda,\mu}(\Omega)=\mu^{-2/p}\mu_s(\Omega) = g(\infty)\mu_s(\Omega) \geq \inf_{t\in (0,+\infty)}g(t) \mu_s(\Omega).
    \end{align*}

    \noindent\textbf{
        Case 2. $\liminf\limits_{n\rightarrow +\infty}\int_\Omega \frac{|v_n|^{p}}{|x|^s}\, \mathrm{d}x=0$.
    }
    Similarly, we have
    \begin{align*}
        S_{\alpha,\beta,\lambda,\mu}(\Omega)=\lambda^{-2/p}\mu_s(\Omega) =  g(0)\mu_s(\Omega) \geq \inf_{t\in (0,+\infty)}g(t) \mu_s(\Omega).
    \end{align*}

    \noindent\textbf{
        Case 3. Up to a subsequence, $\exists\, \delta>0$ s.t.
        \[ 
            \lim_{n\rightarrow +\infty}\int_\Omega \frac{|u_n|^{p}}{|x|^s} \,\mathrm{d}x=\delta>0,\quad \displaystyle\lim_{n\rightarrow +\infty}\int_\Omega \frac{|v_n|^{p}}{|x|^s}\, \mathrm{d}x=1-\delta>0.
        \]
    }
    Let $t_n>0$ be that $$\int_\Omega \frac{|v_n|^{p}}{|x|^s}\,\mathrm{d}x=\int_\Omega \frac{|t_nu_n|^{p}}{|x|^s}\,\mathrm{d}x,$$ then $\{t_n\}$ is bounded and away from $0$. Up to a subsequence, we may assume that $t_n\rightarrow t_0=\left(\frac{1-\delta}{\delta}\right)^{\frac{1}{p}}$. Let $w_n=\frac{v_n}{t_n}$, then
    \begin{equation*}
        \int_\Omega \frac{|u_n|^{p}}{|x|^s}\,\mathrm{d}x=\int_\Omega\frac{|w_n|^{p}}{|x|^s}\,\mathrm{d}x
    \end{equation*}
    and by Young's inequality,
    \begin{align*}
        \int_\Omega \frac{|u_n|^\alpha|w_n|^\beta}{|x|^s}\,\mathrm{d}x\leq{}& \frac{\alpha}{p}\int_\Omega \frac{|u_n|^{p}}{|x|^s}\,\mathrm{d}x+\frac{\beta}{p}\int_\Omega \frac{|w_n|^{p}}{|x|^s}\,\mathrm{d}x\\
        ={}&\int_\Omega \frac{|u_n|^{p}}{|x|^s}\,\mathrm{d}x=\int_\Omega\frac{|w_n|^{p}}{|x|^s}\,\mathrm{d}x.
    \end{align*}
    Hence,
    \begin{align}\label{S lower bound}
        G(u_n,v_n)={}&G(u_n,t_nw_n)\nonumber\\
        ={}&\frac{\int_\Omega |\nabla u_n|^2\,\mathrm{d}x}{\Big(\int_\Omega \big(\lambda \frac{|u_n|^{p}}{|x|^s}+\mu t_{n}^{p}\frac{|w_n|^{p}}{|x|^s}+\kappa pt_n^\beta\frac{|u_n|^\alpha |w_n|^\beta}{|x|^s}\big)\,\mathrm{d}x\Big)^{2/p}}\nonumber\\
        &+\frac{\int_\Omega t_n^2|\nabla w_n|^2\,\mathrm{d}x}{\Big(\int_\Omega \big(\lambda \frac{|u_n|^{p}}{|x|^s}+\mu t_{n}^{p}\frac{|w_n|^{p}}{|x|^s}+\kappa p t_n^\beta\frac{|u_n|^\alpha |w_n|^\beta}{|x|^s}\big)\,\mathrm{d}x\Big)^{2/p}}\nonumber\\
        \geq{}&\frac{1}{\left(\lambda+\mu t_{n}^{p}+\kappa p t_n^\beta\right)^{2/p}}\frac{\int_\Omega |\nabla u_n|^2 \,\mathrm{d}x}{\left(\int_\Omega \frac{|u_n|^{p}}{|x|^s}\,\mathrm{d}x\right)^{2/p}}\nonumber\\
        &+\frac{t_n^2}{\left(\lambda+\mu t_{n}^{p}+\kappa p t_n^\beta\right)^{2/p}}\frac{\int_\Omega |\nabla w_n|^2 \,\mathrm{d}x}{\left(\int_\Omega \frac{|w_n|^{p}}{|x|^s}\,\mathrm{d}x\right)^{2/p}}\nonumber\\
        \geq{}&\frac{1}{\left(\lambda+\mu t_{n}^{p}+\kappa p t_n^\beta\right)^{2/p}}\mu_{s}(\Omega)+\frac{t_n^2}{\left(\lambda+\mu t_{n}^{p}+\kappa pt_n^\beta\right)^{2/p}}\mu_{s}(\Omega)\nonumber\\
        ={}&g(t_n)\mu_{s}(\Omega).
    \end{align}
    Letting $n\rightarrow +\infty$, we obtain that
    \[ 
        S_{\alpha,\beta,\lambda,\mu}(\Omega)\geq g(t_0)\mu_s(\Omega)\geq \inf_{t\in (0,+\infty)}g(t) \mu_s(\Omega).
    \]
    Thereby $\displaystyle S_{\alpha,\beta,\lambda,\mu}(\Omega)= \inf_{t\in (0,+\infty)}g(t) \mu_s(\Omega)$ is proved.

    If $(\phi,\psi)$ is an extremal of $S_{\alpha,\beta,\lambda,\mu}(\Omega)$, let $(u_n,v_n)\equiv (\phi,\psi)$, then each step in \eqref{S lower bound} takes equality and {$t_0\in \mathcal{S}$}. Hence $|\psi| = |t_0\phi|$ and $\phi$ is an extremal of $\mu_s(\Omega)$ (If $t_0=\infty$, then $\phi=0$ and $\psi$ is an extremal). In particular, if $\Omega=\R^N$, then minimizers do not change sign, implying that $\psi = \pm t_0\phi$.
\end{proof}

\section{Preparations for the Proof of Stability} \label{sec2}
From now on, we omit $\R^N$ if there are no confusions. For $A,B\in\R$, we say $A\lesssim B$ if there exists a constant $C>0$ depending only on $\alpha,\beta,\lambda,\mu,N,s$ such that $A\le CB$. We say $A\approx B$ if both $A\lesssim B$ and $B\lesssim A$.

\subsection{Spectrum Analysis of the Hardy-Sobolev Inequality}
For $V\in\mathcal{M}\backslash\{0\}$, consider the eigenvalue problem
\begin{equation}\label{eigenprob}
    \begin{cases}
        -\Delta u = \lambda |x|^{-s}V^{p-2}u & \text{in $\R^N$},\\
        u \in D^{1,2}(\R^N).
    \end{cases}
\end{equation}
\begin{lemma}\label{spectrum-HLS}
    The first two eigenvalues of \eqref{eigenprob} are given by $\Lambda_1=\mu_s$ and $\Lambda_2 = (p-1)\mu_s$. The eigenspaces are spanned by $V$ and $\partial_\lambda V$, respectively.
\end{lemma}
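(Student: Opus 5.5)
Since $V\in\mathcal{M}\setminus\{0\}$ and $V^{p-2}$ is invariant under $V\mapsto -V$, we may assume $V=U(k,1)$ with $k>0$. The scaling $u\mapsto \tau^{(N-2)/2}u(\tau x)$ preserves the problem, so $\tau=1$ as well. Normalize $k$ so that $-\Delta V = \mu_s |x|^{-s}V^{p-1}$; this is the Euler--Lagrange equation of $\mu_s$ under the constraint $\int |x|^{-s}V^p=1$. Note that $\partial_\lambda V$ in the statement denotes the derivative of $U(k,\tau)$ with respect to the dilation parameter $\tau$ at $\tau=1$.

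\textbf{First eigenvalue.} Testing \eqref{eigenprob} with $u=V$ shows that $V$ is an eigenfunction with eigenvalue $\mu_s$. Conversely, for any $u\in D^{1,2}$, H\"older's inequality with exponents $p/(p-2)$ and $p/2$ gives
\[
\int|x|^{-s}V^{p-2}u^2\le \Big(\int |x|^{-s}V^p\Big)^{\frac{p-2}{p}}\Big(\int|x|^{-s}|u|^p\Big)^{\frac2p}\le \mu_s^{-1}\|\nabla u\|_2^2 .
\]
Hence $\Lambda_1\ge \mu_s$. Equality in the Hardy--Sobolev step forces $u\in\mathcal{M}$. Equality in the H\"older step forces $|u|$ to be proportional to $V$, so the first eigenspace is spanned by $V$.

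\textbf{Second eigenvalue.} Differentiating the equation $-\Delta U(k,\tau)=\mu_s|x|^{-s}U(k,\tau)^{p-1}$ in $\tau$ at $\tau=1$ shows that $\partial_\tau V$ solves \eqref{eigenprob} with eigenvalue $(p-1)\mu_s$. It lies in $D^{1,2}$ thanks to its decay $|x|^{-(N-2)}$ at infinity.

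To prove that there is nothing in between and that this eigenspace is one-dimensional, we use the radial symmetry of $V$. Write $u=\sum_{\ell,m} f_{\ell m}(r)Y_{\ell m}(\theta)$ in spherical harmonics. This reduces \eqref{eigenprob} to the ODEs
\[
-f''-\tfrac{N-1}{r}f'+\tfrac{\ell(\ell+N-2)}{r^2}f=\Lambda\, r^{-s}V^{p-2}f .
\]
Following Rădulescu--Smets--Willem \cite{Radulescu2002}, we change variables $r=\rho^{2/(2-s)}$, i.e. $r^{2-s}=\rho^2$, and rescale $f$ by a suitable power. This turns the radial operator into the one associated with the standard Sobolev bubble $(1+\rho^2)^{-(M-2)/2}$ in a fractional effective dimension $M=\frac{2(N-s)}{2-s}$. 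The angular term becomes $\frac{4\ell(\ell+N-2)}{(2-s)^2\rho^2}$.

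The Sobolev linearized problem in dimension $M$ can be solved explicitly via stereographic projection, or via Gegenbauer/hypergeometric functions. Its eigenvalues are indexed by the angular parameter $\nu$ and a radial index $j\ge 0$. They are increasing in both, and the $(\nu,j)$ eigenvalue equals $\mu_s$ times $\frac{(2j+\nu)(2j+\nu+M-2)}{M(M-2)}$-type expressions, with $\nu$ determined by $\nu(\nu+M-2)=\frac{4\ell(\ell+N-2)}{(2-s)^2}$.

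\textbf{Main obstacle.} The hardest step is carrying out this reduction carefully and checking the ordering of the eigenvalues. For $\ell=0$, the values $j=0,1$ give $\mu_s$ and $(p-1)\mu_s$. For $\ell\ge1$, we must show the lowest eigenvalue exceeds $(p-1)\mu_s$. This follows once we verify $\nu>2$ for $\ell=1$, i.e.
\[
\frac{4(N-1)}{(2-s)^2} > 2M=\frac{4(N-s)}{2-s},
\]
which is equivalent to $N-1>(N-s)(2-s)$. For $s\in(0,2)$ this should hold, since $(N-s)(2-s)<N-1$ when $s>1$ trivially and requires a check otherwise. Should this direct inequality fail for some range of $s$, the fallback is a Hardy-type lower bound on the angular part together with a comparison principle for the radial ODE. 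Given this ordering, the eigenvalue $(p-1)\mu_s$ arises only from $\ell=0$, $j=1$, so its eigenspace is the span of $\partial_\tau V$.
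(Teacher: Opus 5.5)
Your argument for $\Lambda_1$ (H\"older plus Hardy--Sobolev, with both equality cases) is correct. So is the check that $\partial_\tau V$ is an eigenfunction for $(p-1)\mu_s$. So is the substitution $\rho=r^{(2-s)/2}$: it turns the $\ell$-th radial equation into the $M$-dimensional one with angular coefficient $\frac{4\ell(\ell+N-2)}{(2-s)^2}$ and weight proportional to $(1+\rho^2)^{-2}$, and no rescaling of $f$ is needed. Fixing $\|V\|_{(s)}=1$ is not a harmless normalization: the eigenvalues scale like $|k|^{2-p}$, so the lemma is only true for normalized $V$, which is how the paper uses it.

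The gap is in the step you flag as the main obstacle. Your spectral formula is wrong: $\frac{(2j+\nu)(2j+\nu+M-2)}{M(M-2)}$ gives $0$ at $(\nu,j)=(0,0)$ and $\frac{2}{M-2}\neq p-1$ at $(0,1)$. This contradicts your own next sentence, and the threshold $\nu>2$ read off from it is not the right one. Worse, the inequality you then need, $N-1>(N-s)(2-s)$, is equivalent to $(1-s)(N+1-s)<0$, i.e.\ to $s>1$, so it is false on all of $(0,1]$. The fallback (a Hardy-type angular bound plus comparison) cannot repair this. Deciding whether a sector lies above the exact value $(p-1)\mu_s$ requires the exact bottom of that sector, not a lower bound of Hardy type.

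The correct bookkeeping goes as follows. In the sector with $\nu\ge0$, $\nu(\nu+M-2)=\frac{4\ell(\ell+N-2)}{(2-s)^2}$, write $g=\rho^\nu h$. Then $h$ sees the radial Laplacian in dimension $M+2\nu$, so $g_0=\rho^{\nu}(1+\rho^2)^{-\nu-\frac{M-2}{2}}$ is a positive admissible eigenfunction. It is therefore the ground state of the sector, with eigenvalue $\mu_s\frac{(2\nu+M)(2\nu+M-2)}{M(M-2)}$. In general the $j$-th eigenvalue is $\mu_s\frac{(2\nu+2j+M)(2\nu+2j+M-2)}{M(M-2)}$; for integer data this corresponds to degree $\nu+j$ on $\S^M$, not $\nu+2j$.

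For $\ell=0$, $V$ (no zero) and $\partial_\tau V\propto(1+r^{2-s})^{-\frac{N-2}{2-s}-1}(1-r^{2-s})$ (one zero) are the first two radial eigenfunctions by Sturm oscillation. Both are simple, with eigenvalues $\mu_s$ and $\frac{M+2}{M-2}\mu_s=(p-1)\mu_s$.

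For $\ell\ge1$, the sector lies strictly above $(p-1)\mu_s$ if and only if $(2\nu+M)(2\nu+M-2)>M(M+2)$. This is $(\nu-1)(\nu+M)>0$, i.e.\ $\nu>1$, i.e.\ $\nu(\nu+M-2)>M-1=\frac{2N-2-s}{2-s}$. Since $\nu$ increases with $\ell$, it suffices to check $\ell=1$:
\[
\frac{4(N-1)}{(2-s)^2}>\frac{2N-2-s}{2-s}\iff s(2N-s)>0,
\]
which holds for every $s\in(0,2)$. At $s=0$ equality holds; those are the translation modes of the Sobolev bubble, which is exactly why the second eigenspace is one-dimensional only when $s>0$.

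With this correction your route gives a complete, self-contained proof. The paper itself proves nothing here and only cites \cite[Proposition 4.6]{Smets2003}.
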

For the proof of it, see \cite[Proposition 4.6]{Smets2003}.

\subsection{Properties of $g(t)$}
{Let $\mathcal{S}$ be defined by \eqref{eq:20251111-1518}.
Next, we construct equality and inequality of any $t_0\in \mathcal{S}$.}
\begin{proposition} \label{prop g(t)}
    If  {$t_0\in \mathcal{S}\backslash\{0,\infty\}$}, it satisfies
    \begin{equation}\label{g'=0}
        \lambda - \mu t_0^{p-2} + \kappa\alpha t_0^{\beta} - \kappa \beta t_0^{\beta-2} = 0,
    \end{equation}
    and
    \begin{equation}\label{g''>=0}
        \alpha(2-\alpha)\kappa t_0^\beta + \kappa t_0^{\beta-2}\alpha\beta \ge (p-2)\lambda.
    \end{equation}
    We can additionally get
    \begin{equation}\label{g'=0 extend}
        \lambda + \mu t_0^p + \kappa p t_0^\beta = (1+t_0^2)(\lambda + \kappa \alpha t_0^\beta).
    \end{equation}
    Moreover, $t_0$ is degenerate, i.e.,
    the equality of \eqref{g''>=0} holds, if and only if one of the following happens:
    \begin{enumerate}[(i)]
        \item $(\alpha,\beta,\lambda,\mu)=(2,2,2\kappa,2\kappa)$. And $g(t)\equiv \mathrm{const.}$ in such a case.
        \item $\alpha,\beta\in (1,2)$ and
        \[ t_0 = \sqrt{\frac{2-\beta}{2-\alpha}},\quad \lambda = \frac{2\kappa \alpha}{p-2}t_0^{\beta-2},\quad \mu = \frac{2\kappa \beta}{p-2}t_0^{2-\alpha}.\]
        In particular, $g^{(i)}(t_0)=0$ for $i=1,2,3$ and $g^{(4)}(t_0)>0$ in such a case.
    \end{enumerate}
\end{proposition}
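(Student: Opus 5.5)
The plan is to work with the logarithmic derivative of $g$. Write $h(t)=\lambda+\mu t^{p}+\kappa p t^{\beta}$, so that $g=(1+t^2)h^{-2/p}$. Differentiating,
\[
\frac{g'(t)}{g(t)}=\frac{2t}{1+t^2}-\frac{2}{p}\frac{h'(t)}{h(t)}
=\frac{2\big[t\,h(t)-(1+t^2)(\mu t^{p-1}+\kappa\beta t^{\beta-1})\big]}{(1+t^2)h(t)} .
\]
Expanding the bracket and using $p-\beta=\alpha$, the terms $\mu t^{p+1}$ cancel, and the bracket equals $t\,F(t)$, where
\[
F(t):=\lambda-\mu t^{p-2}+\kappa\alpha t^{\beta}-\kappa\beta t^{\beta-2}.
\]
Hence $g'(t)=c(t)F(t)$ with
\[
c(t)=\frac{2t\,g(t)}{(1+t^2)h(t)}>0 \quad\text{on } (0,\infty).
\]
For $t_0\in\mathcal S\setminus\{0,\infty\}$ we have $g'(t_0)=0$, which gives \eqref{g'=0}.

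For \eqref{g'=0 extend}, I will solve \eqref{g'=0} for $\mu t_0^{p}=t_0^2(\lambda+\kappa\alpha t_0^\beta)-\kappa\beta t_0^\beta$ and substitute into $h(t_0)$. Using $\beta+\alpha=p$, this gives $h(t_0)=(1+t_0^2)(\lambda+\kappa\alpha t_0^\beta)$.

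Since $F(t_0)=0$, we get $g''(t_0)=c(t_0)F'(t_0)$, so $g''(t_0)\ge0$ is equivalent to $F'(t_0)\ge0$. I will compute
\[
t_0F'(t_0)=-(p-2)\mu t_0^{p-2}+\kappa\alpha\beta t_0^{\beta}-\kappa\beta(\beta-2)t_0^{\beta-2}
\]
and eliminate $\mu t_0^{p-2}$ using \eqref{g'=0}. Collecting coefficients, the $t_0^\beta$ term becomes $\kappa\alpha(\beta-p+2)=\kappa\alpha(2-\alpha)$ and the $t_0^{\beta-2}$ term becomes $\kappa\beta(p-\beta)=\kappa\alpha\beta$. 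Thus
\[
t_0F'(t_0)=\kappa\alpha(2-\alpha)t_0^\beta+\kappa\alpha\beta t_0^{\beta-2}-(p-2)\lambda ,
\]
which yields \eqref{g''>=0} and shows that equality there means exactly $F'(t_0)=0$.

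For the characterization of degeneracy, suppose $F(t_0)=F'(t_0)=0$. Since $g'=cF$ with $c>0$, $t_0$ can only be a minimum if $F$ changes sign from $-$ to $+$ at $t_0$. So if $F''(t_0)\neq0$, $F$ keeps a fixed sign near $t_0$ and $t_0$ is not an extremum; hence $F''(t_0)=0$ is forced.

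I will then solve the system $F=F'=F''=0$. The two equalities \eqref{g'=0} and the equality case of \eqref{g''>=0} express $\lambda$ and $\mu$ in terms of $t_0$. Substituting into $F''(t_0)=0$ should leave a relation between $t_0^2$, $\alpha$ and $\beta$ of the form $(2-\alpha)t_0^2=2-\beta$. This splits into cases:
\begin{itemize}
\item If $\alpha,\beta\in(1,2)$, it forces $t_0=\sqrt{(2-\beta)/(2-\alpha)}$, and back-substitution gives the stated $\lambda$ and $\mu$.
\item If $\alpha=\beta=2$, one gets $\lambda=\mu=2\kappa$. Then $g\equiv(1+t^2)/(2\kappa(1+t^2)^2)^{1/2}$ is constant, which is case (i).
\item The mixed sign cases, where one of $\alpha,\beta$ is at least $2$ and the other at most $2$, and the case $\alpha,\beta>2$, have to be excluded by a sign or positivity argument: either no positive $t_0$ solves the relation, or the resulting $\lambda$ or $\mu$ is nonpositive, or $F'''(t_0)$ and the minimality requirement conflict.
\end{itemize}

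For the converse in case (ii), I will check directly that $F(t_0)=F'(t_0)=F''(t_0)=0$ and $F'''(t_0)>0$. This gives $g^{(i)}(t_0)=0$ for $i=1,2,3$ and $g^{(4)}(t_0)=c(t_0)F'''(t_0)>0$. I also need $t_0\in\mathcal S$, i.e. that $t_0$ is a global minimizer. For this I plan to show that $F$ has $t_0$ as its only positive zero: multiply by $t^{2-\beta}$ and study monotonicity or convexity of the resulting function, so that $g$ decreases on $(0,t_0)$ and increases on $(t_0,\infty)$.

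I expect the main obstacle to be the elimination in $F''(t_0)=0$ and the exclusion of the mixed and large-exponent cases. The algebra is mechanical, but the sign bookkeeping over the possible ranges of $\alpha$ and $\beta$ is where errors are likely.
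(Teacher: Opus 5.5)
Your plan follows the paper's proof essentially step for step. Your factorization $g'=c\,F$ with $c>0$ is the paper's $g'=h\,r$. Eliminating $\mu t_0^{p-2}$ gives the second-order inequality. Minimality then forces $F''(t_0)=0$ (the paper phrases this as $g^{(3)}(t_0)=0$). Substituting $\lambda,\mu$ reduces $F''(t_0)=0$ to $(2-\alpha)t_0^2=2-\beta$.

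\textbf{The open step: $\alpha,\beta>2$.} This is the one case you leave unresolved, and your first two exclusion mechanisms do not apply to it. The relation has the legitimate positive root $t_0=\sqrt{(\beta-2)/(\alpha-2)}$. The back-substituted values $\lambda=\frac{2\kappa\alpha}{p-2}t_0^{\beta-2}$ and $\mu=\frac{2\kappa\beta}{p-2}t_0^{2-\alpha}$ are both positive. So you must actually carry out your third mechanism. Using $\mu t_0^{p-2}=\frac{2\kappa\beta}{p-2}t_0^{\beta}$ and $t_0^{-2}=\frac{2-\alpha}{2-\beta}$, one finds
\[
t_0^3F'''(t_0)=2\kappa\alpha\beta(2-\alpha)\,t_0^{\beta}.
\]
This is negative for $\alpha>2$. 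Since $F(t_0)=F'(t_0)=F''(t_0)=0$, $F$ then passes from $+$ to $-$ at $t_0$, so $t_0$ is a strict local maximum of $g$, a contradiction. This is exactly how the paper concludes, via $0\le g^{(4)}(t_0)=h(t_0)r^{(3)}(t_0)$. The same formula gives $F'''(t_0)>0$ when $\alpha<2$. So do this computation once for general $\alpha$, not only inside case (ii); it settles both the exclusion and the claim $g^{(4)}(t_0)>0$.

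\textbf{The converse.} Your global-minimality argument is not needed for this proposition. Here $t_0\in\mathcal S\setminus\{0,\infty\}$ is a standing hypothesis. The converse is therefore just substitution of the parameters in (ii) into the second-order inequality, which gives $(p-2)\lambda=2\kappa\alpha t_0^{\beta-2}$, together with constancy of $g$ in case (i). Uniqueness of the critical point in case (ii) is only used later, in Proposition~\ref{prop:finite-minimizer}. There the paper proves it by showing that $\varphi=\lambda-F$ is strictly decreasing.
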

\begin{proof}
    We compute the derivatives of $g$:
    \begin{align*}
        g'(t) ={}& \frac{2t(\lambda + \mu t^p + \kappa p t^\beta) - 2(1+t^2)(\mu t^{p-1} + \kappa \beta t^{\beta-1})}{(\lambda + \mu t^p + \kappa p t^\beta)^{2/p + 1}} \\
        ={}& \frac{2t\lambda + 2\kappa \alpha t^{\beta+1} - 2\mu t^{p-1} -2\kappa \beta t^{\beta-1}}{(\lambda + \mu t^p + \kappa p t^\beta)^{2/p+1}} \\
        ={}& \frac{2t}{(\lambda + \mu t^p + \kappa p t^\beta)^{2/p+1}} (\lambda - \mu t^{p-2} + \kappa \alpha t^\beta - \kappa \beta t^{\beta-2}).
    \end{align*}
    Since $t_0\in (0,\infty)$ is a minimizer of $g$, we have $g'(t_0) =0$ and derive \eqref{g'=0}. Thus \eqref{g'=0 extend} holds:
    \[ 
        \lambda + \mu t_0^p + \kappa p t_0^\beta = \lambda + \kappa p t_0^{\beta} + t_0^2(\lambda + \kappa\alpha t_0^\beta - \kappa\beta t_0^{\beta-2}) = (1+t_0^2)(\lambda + \kappa \alpha t_0^\beta).
    \]
    Next, we compute $g''(t)$. Write $h(t) = \frac{2t}{(\lambda + \mu t^p + \kappa p t^\beta)^{2/p + 1}}$ and $r(t) = \lambda - \mu t^{p-2} + \kappa \alpha t^{\beta} - \kappa \beta t^{\beta-2}$, then
    \begin{equation}\label{eq:20251110-1746}
        g''(t) = h'(t)r(t) + h(t)r'(t).
    \end{equation}
    Since $h$ is smooth in $(0,\infty)$, and $r(t_0)=0$, $h(t_0)>0$, by the fact that $g''(t_0)\ge 0$ and substitute $\mu$ by \eqref{g'=0}, we obtain \eqref{g''>=0}:
    \begin{align*}
        0 \le{}& g''(t_0) = h(t_0)r'(t_0) = h(t_0)t^{-1}(-(p-2)\mu t_0^{p-2} + \kappa\alpha\beta t_0^\beta - \kappa\beta(\beta-2)t_0^{\beta-2}) \\
        \implies 0 \le{}&  -(p-2)\mu t_0^{p-2} + \kappa\alpha\beta t_0^\beta - \kappa\beta(\beta-2)t_0^{\beta-2}\\
        ={}& -(p-2)\lambda + \alpha(2-\alpha )\kappa t_0^\beta + \alpha\beta\kappa t_0^{\beta-2}.
    \end{align*}
    The only thing left is to figure out when $g''(t_0)=0$ happens. In fact, if $g''(t_0)=0$, then by Taylor's expansion,
    \[ 
        g(t) = g(t_0) + \frac 16 g^{(3)}(t_0)(t-t_0)^3 + \frac{1}{24}g^{(4)}(t+\theta(t-t_0))(t-t_0)^4.
    \]
    Since $t_0$ is a minimizer of $g(t)$, and $t_0\in (0,+\infty)$ is an interior point, we must have $g^{(3)}(t_0)=0$ and $g^{(4)}(t_0)\ge 0$. Notice that $r(t_0) = 0$ and $h(t_0)>0$, \eqref{eq:20251110-1746} gives that   $r'(t_0)=0$ when $g''(t_0)=0$. Substituting $\lambda,\mu$ by \eqref{g'=0}, $\eqref{g''>=0}$ yields
    \begin{align*}
        0={}& g^{(3)}(t_0) = h''(t_0)r(t_0) + 2 h'(t_0)r'(t_0) + h(t_0)r''(t_0) = h(t_0)r''(t_0) \\
        \implies 0 ={}& t_0^2r''(t_0)\\
        ={}& -(p-2)(p-3)\mu t^{p-2} + \kappa\alpha\beta(\beta-1)t_0^{\beta} - \kappa\beta(\beta-2)(\beta-3)t_0^{\beta-2} \\
        ={}& -(p-2)(p-3)\lambda - (p-2)(p-3)\kappa\alpha t_0^\beta + (p-2)(p-3)\kappa\beta t_0^{\beta-2} \\
        & + \beta(\beta-1)\kappa\alpha t_0^\beta- (\beta-2)(\beta-3)\kappa\beta t_0^{\beta-2} \\
        ={}& -(p-3)((2-\alpha)\kappa\alpha t_0^\beta + \alpha\kappa\beta t_0^{\beta-2}) \\
        & - (p-2)(p-3)\kappa\alpha t_0^\beta + (p-2)(p-3)\kappa\beta t_0^{\beta-2} \\
        & + \beta(\beta-1)\kappa\alpha t_0^\beta - (\beta-2)(\beta-3)\kappa\beta t_0^{\beta-2} \\
        ={}& \beta(2-\alpha)\kappa\alpha t_0^{\beta} + (\beta-2)\alpha \kappa\beta t_0^{\beta-2}.
    \end{align*}
    Hence,
    \begin{equation}\label{alpha beta 3}
        (2-\alpha)t_0^2 + (\beta-2) = 0.
    \end{equation}
    Taking \eqref{alpha beta 3} into \eqref{g''>=0}, we obtain
    \begin{equation}\label{eq:20251110-1801}
        (p-2)\lambda = \kappa\alpha t_0^{\beta-2} ( (2-\alpha)t_0^2 + \beta) = 2\kappa \alpha t_0^{\beta-2}.
    \end{equation}
    If $\alpha=2$, then \eqref{alpha beta 3} gives that $\beta=2$ and $p=4$. In such a case, \eqref{eq:20251110-1801} gives that $\lambda=2\kappa$. Furthermore, by \eqref{g'=0}, we obtain that $\lambda-\mu t_0^2+2\kappa t_0^2 -2\kappa=0$, and thus $(2\kappa-\mu)t_0^2=0$. Since $t_0>0$, we also have that $\mu=2\kappa$. Hence, $g(t) \equiv (2\kappa)^{-2/p}$ is constant.

    Otherwise $\alpha\neq 2$, we see from \eqref{alpha beta 3} that $\beta\neq 2$ either.
    Then \eqref{alpha beta 3} gives that
    \begin{equation}\label{eq:20251110-1850}
        t_0 = \sqrt{\frac{2-\beta}{2-\alpha}}.
    \end{equation}
    By \eqref{eq:20251110-1801}, we have that
    \begin{equation}\label{eq:20251110-1851}
        \lambda = \frac{2\kappa \alpha}{p-2}t_0^{\beta-2}=\frac{2\kappa \alpha}{p-2} \left(\frac{2-\beta}{2-\alpha}\right)^{\frac{\beta-2}{2}}.
    \end{equation}
    We remark that a direct computation shows that $2\alpha-(p-2)\beta +(p-2)\alpha t_0^2=2\beta t_0^2$, or saying that
    \begin{equation}\label{eq:20251110-1923}
        2\alpha t_{0}^{-2}-(p-2)\beta t_{0}^{-2}+(p-2)\alpha=2\beta.
    \end{equation}
    So, by \eqref{g'=0} and \eqref{eq:20251110-1923},
    \begin{align*}
        \mu t_{0}^{p-2}={}&\lambda+\kappa \alpha t_0^\beta -\kappa \beta t_{0}^{\beta-2}\\
        ={}&\frac{2\kappa \alpha}{p-2} t_{0}^{\beta-2}+\kappa \alpha t_0^\beta -\kappa \beta t_{0}^{\beta-2}\\
        ={}&\frac{\kappa  t_0^\beta}{p-2}\left[2\alpha t_{0}^{-2}-(p-2)\beta t_{0}^{-2}+(p-2)\alpha\right]\\
        ={}&\frac{2\kappa  \beta t_0^\beta}{p-2},
    \end{align*}
    which gives that
    \begin{equation}\label{eq:20251110-1919}
        \mu = \frac{2\kappa \beta}{p-2}t_0^{2-\alpha}=\frac{2\kappa \beta}{p-2} \left(\frac{2-\alpha}{2-\beta}\right)^{\frac{\alpha-2}{2}}.
    \end{equation}
    Now, we can deduce that
    \begin{equation}\label{eq:20251110-1959}
        \begin{aligned}
            & t_0^3 r^{(3)}(t_0)\\
            &= -(p-2)(p-3)(p-4)\mu t_0^{p-2} \\
            &\qquad +\kappa \alpha\beta(\beta-1)(\beta-2)t_0^\beta - \kappa\beta(\beta-2)(\beta-3)(\beta-4)t_0^{\beta-2}\\
            &=-(p-3)(p-4) 2\kappa \beta t_0^\beta  + \kappa \alpha\beta(\beta-1)(\beta-2)t_0^\beta \\
            &\qquad - \kappa\beta(\beta-2)(\beta-3)(\beta-4)t_0^{\beta-2}\\
            &=\kappa\beta t_0^\beta\left[-2(p-3)(p-4)+\alpha(\beta-1)(\beta-2)-(\beta-2)(\beta-3)(\beta-4) t_{0}^{-2}\right]\\
            &=\kappa\beta t_0^\beta\left[-2(\alpha+\beta-3)(\alpha+\beta-4)+\alpha(\beta-1)(\beta-2)+(2-\alpha)(\beta-3)(\beta-4)\right]\\
            &= 2\kappa\alpha\beta t_0^\beta (2-\alpha).
        \end{aligned}
    \end{equation}
    Recalling that $g'(t_0)=g''(t_0)=g'''(t_0)=0$ and $h(t_0)>0$, we obtain that $r(t_0) = r'(t_0) = r''(t_0) =0$.  And thus
    \begin{align*}
        0 \leq g^{(4)}(t_0) ={}& h^{(3)}(t_0)r(t_0) + 3 h''(t_0)r'(t_0) + 3h'(t_0)r''(t_0) + h(t_0) r^{(3)}(t_0) \\
        ={}& h(t_0)r^{(3)}(t_0).
    \end{align*}
    Combining with \eqref{eq:20251110-1959}, we obtain that $2\kappa\alpha\beta t_0^\beta (2-\alpha)\geq 0$. Since $\alpha\neq 2$, we see that $\alpha<2$. By \eqref{eq:20251110-1850}, we also have that $\beta<2$. That is, $\alpha,\beta\in (1,2)$ and $g^{(4)}(t_0)=h(t_0)r^{(3)}(t_0)=2\kappa\alpha\beta h(t_0) t_{0}^{\beta-3} (2-\alpha)>0$ in such a case.
\end{proof}

\begin{proposition}\label{prop g(0)}
    Suppose $0\in \mathcal{S}$. Then $\beta \ge 2$ and $\lambda \ge \mu$. Moreover, the following hold:
    \begin{enumerate}
        \item $g''(0) > 0$ if and only if either
        \begin{enumerate}
            \item $\beta > 2$, or
            \item $\beta = 2$ and $\lambda > 2\kappa$.
        \end{enumerate}
        \item $g''(0) = 0$ if and only if $\beta = 2$ and $\lambda= 2\kappa$. In this case, $g^{(3)}(0) = 0$, and $g^{(4)}(0) > 0$ if and only if either
        \begin{enumerate}
            \item $\alpha > 2$, or
            \item $\alpha = 2$ and $\mu < \lambda$.
        \end{enumerate}
        \item $g''(0)=g^{(4)}(0) = 0$ occurs only when $(\alpha, \beta, \lambda, \mu) = (2, 2, 2\kappa, 2\kappa)$, in which case $g(t) \equiv (2\kappa)^{-2/p}$ is constant.
    \end{enumerate}
    
    By symmetry, if $t_0 = \infty$ is a minimizer of $g(t)$, analogous statements hold for $\tilde{g}''(0)$ and $\tilde{g}^{(4)}(0)$ after interchanging $\alpha$ with $\beta$ and $\lambda$ with $\mu$. (Recall \eqref{symmetry} for the definition of $\tilde{g}(t)$.)
\end{proposition}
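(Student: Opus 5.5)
The plan is a direct Taylor expansion of $g$ at $t=0$. Here $g(t)=(1+t^2)\,F(t)^{-2/p}$ with $F(t)=\lambda+\mu t^p+\kappa p t^\beta$. Since $p>2$ and $\beta>1$, the exponents $p$ and $\beta$ control which terms dominate near $0$. Write
\[
F(t)^{-2/p}=\lambda^{-2/p}\Big(1-\tfrac{2}{p}\lambda^{-1}(\kappa p t^\beta+\mu t^p)+\tfrac{p+2}{p^2}\lambda^{-2}\kappa^2p^2t^{2\beta}+\cdots\Big),
\]
which gives
\[
\lambda^{2/p}g(t)=1+t^2-\tfrac{2\kappa}{\lambda}t^\beta-\tfrac{2\mu}{p\lambda}t^p+\text{h.o.t.}
\]
The first claim follows from this expansion. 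If $\beta<2$, the term $-t^\beta$ dominates, so $g(t)<g(0)$ for small $t>0$; this contradicts $0\in\mathcal S$, hence $\beta\ge2$. For $\lambda\ge\mu$, use that $0\in\mathcal S$ forces $g(0)\le g(\infty)$, i.e.\ $\lambda^{-2/p}\le\mu^{-2/p}$, which is exactly $\lambda\ge\mu$.

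For item 1, suppose first $\beta>2$. Then $g(t)-g(0)=\lambda^{-2/p}t^2+o(t^2)$. To be careful about regularity, I would write $g''(0)$ as $\lim 2(g(t)-g(0))/t^2$, since $g$ is only $C^{2}$-type near $0$ when $\beta\in(2,3)$, and obtain $g''(0)=2\lambda^{-2/p}>0$. Now suppose $\beta=2$. Then the quadratic coefficient is $\lambda^{-2/p}(1-2\kappa/\lambda)$. Since $0$ is a minimizer this is $\ge0$, so $\lambda\ge2\kappa$, with $g''(0)>0$ iff $\lambda>2\kappa$. Together these prove items 1 and 2's first statement: $g''(0)=0$ iff $\beta=2$ and $\lambda=2\kappa$.

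In the degenerate case $\beta=2$, $\lambda=2\kappa$, we have $p=\alpha+2$. The $t^2$ terms cancel, so we go to the next order. The surviving terms are
\[
\lambda^{2/p}(g(t)-g(0))=-\tfrac{2\mu}{p\lambda}t^{\alpha+2}-\tfrac{2\kappa}{\lambda}t^4+\tfrac{p+2}{p^2}\tfrac{\kappa^2p^2}{\lambda^2}t^4+\cdots,
\]
and the $t^4$ coefficient combines with the cross term $t^2\cdot(-\tfrac{2\kappa}{\lambda}t^2)$. With $\lambda=2\kappa$ the net $t^4$ coefficient is $-1+\tfrac{p+2}{4}-1=\tfrac{p-6}{4}+\tfrac12=\tfrac{\alpha}{4}\cdot\ldots$, and it must come out to a positive multiple of $(\alpha-2)$ plus the contribution of $-t^{\alpha+2}$ when $\alpha=2$. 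No odd power $t^3$ appears, so $g^{(3)}(0)=0$.

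The key bookkeeping is then a case split on $\alpha$. If $\alpha>2$, the $t^{\alpha+2}$ term is of higher order than $t^4$, and the $t^4$ coefficient should equal $c(\alpha)>0$. If $\alpha<2$, the term $-t^{\alpha+2}$ dominates and contradicts minimality, so this case cannot occur. If $\alpha=2$, then $p=4$, and the $t^4$ coefficient becomes proportional to $\lambda-\mu=2\kappa-\mu$. I would check this value against $\kappa\ge0$ and $\lambda\ge\mu$; it then gives $g^{(4)}(0)>0$ iff $\mu<\lambda$. This establishes item 2. Item 3 follows because vanishing of the $t^4$ coefficient forces $\alpha=2$ and $\mu=\lambda=2\kappa$. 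In that case $g(t)=(1+t^2)/(2\kappa(1+t^2)^2)^{1/2}$ is constant, which also matches Proposition \ref{prop g(t)}(i).

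The main obstacle is the exact $t^4$ coefficient computation in the degenerate case. An alternative that I would use to cross-check is to substitute $\lambda=2\kappa$, $\beta=2$ and factor
\[
F(t)=2\kappa(1+t^2)^{\,}\cdot\ldots,
\]
i.e.\ compare $F$ with $2\kappa(1+t^2)^{p/2}$ directly. Then $g(t)\ge g(0)$ reduces to
\[
2\kappa+\mu t^p+2\kappa p t^2\le 2\kappa(1+t^2)^{p/2},
\]
whose $t^4$ term is $2\kappa\tfrac{p(p-2)}{8}$, and this makes the sign analysis transparent. The statement for $t_0=\infty$ follows by applying the same argument to $\tilde g$ from \eqref{symmetry}, with $\alpha\leftrightarrow\beta$ and $\lambda\leftrightarrow\mu$ interchanged.
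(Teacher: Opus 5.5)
Your route, a direct expansion of $g(t)=(1+t^2)F(t)^{-2/p}$ at $t=0$, is legitimate but differs from the paper's. The paper factors $g'(t)=\bar h(t)\bar r(t)$ with $\bar h(t)=2F(t)^{-2/p-1}$ and $\bar r(t)=\lambda t-\mu t^{p-1}+\kappa\alpha t^{\beta+1}-\kappa\beta t^{\beta-1}$. It then reads $g^{(k)}(0)=\bar h(0)\bar r^{(k-1)}(0)$ off the Leibniz rule. In the degenerate case $\bar r(t)=\kappa\alpha t^3-\mu t^{\alpha+1}$, so no cancellation needs tracking. Your treatment of $\beta\ge 2$, $\lambda\ge\mu$, item 1 and $g''(0)=0\iff(\beta,\lambda)=(2,2\kappa)$ is fine.

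The problem is exactly the step you call the main obstacle, the $t^4$ coefficient in the degenerate case. In your display, $-\frac{2\kappa}{\lambda}t^4$ already \emph{is} the cross term $t^2\cdot(-\frac{2\kappa}{\lambda}t^2)$, and you then add it a second time. You conclude the coefficient is a positive multiple of $(\alpha-2)$, which is false. With $\beta=2$, $\lambda=2\kappa$, $p=\alpha+2$,
\[
\lambda^{2/p}\bigl(g(t)-g(0)\bigr)=\Bigl(-1+\tfrac{p+2}{4}\Bigr)t^4-\tfrac{2\mu}{p\lambda}t^{\alpha+2}+O(t^5)=\tfrac{\alpha}{4}\,t^4-\tfrac{2\mu}{p\lambda}t^{\alpha+2}+O(t^5).
\]
Under your version, the case $\alpha=2$ would give total $t^4$ coefficient $0-\frac{\mu}{2\lambda}<0$. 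That would ``prove'' $0\notin\mathcal S$ whenever $\alpha=\beta=2$ and $\lambda=2\kappa$. This contradicts your own next claims (``proportional to $\lambda-\mu$'', item 2(b), item 3). So the main line as written does not establish item 2 or item 3.

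With the correct coefficient you get:
\begin{itemize}
\item $g^{(4)}(0)=6\alpha\lambda^{-2/p}>0$ for $\alpha>2$;
\item $g^{(4)}(0)=12\lambda^{-2/p-1}(\lambda-\mu)$ for $\alpha=2$;
\item a contradiction with minimality for $\alpha\in(1,2)$, where the $-t^{\alpha+2}$ term dominates.
\end{itemize}
This matches the paper, and item 3 then follows as you say.

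Your cross-check is the right way to see this, but it needs two fixes. First, $F(t)=2\kappa+\mu t^p+\kappa p\,t^2$, not $2\kappa p\,t^2$; the latter leaves an uncancelled $t^2$ term. Then $D(t):=\lambda(1+t^2)^{p/2}-F(t)=\frac{\kappa p\alpha}{4}t^4-\mu t^p+O(t^6)$. Second, $D$ only gives you the sign of $g(t)-g(0)$. To get the value of $g^{(4)}(0)$, use the exact identity $g(t)=\lambda^{-2/p}\bigl(1-D(t)/(\lambda(1+t^2)^{p/2})\bigr)^{-2/p}$. It yields $g(t)-g(0)=\frac{2}{p}\lambda^{-2/p-1}D(t)+O(t^5)$.
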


\begin{proof}
    Since $t_0 = 0$ minimizes $g(t)$, we have $g(0) \le g(\infty)$, which implies $\lambda \ge \mu$. Define
    \[
        \bar{h}(t) = \frac{2}{(\lambda + \mu t^p + \kappa p t^\beta)^{2/p + 1}}, \quad
        \bar{r}(t) = \lambda t - \mu t^{p-1} + \kappa \alpha t^{\beta+1} - \kappa \beta t^{\beta-1},
    \]
    so that $g'(t) = \bar{h}(t) \bar{r}(t)$. Clearly, $g'(0) = 0$. To compute $g''(0)$, observe:
    \begin{align*}
        g''(0) &= \bar{h}(0) \bar{r}'(0) \\
        &= 2\lambda^{-2/p-1} \left.\frac{d}{dt}\left[\lambda - \mu(p-1)t^{p-2} + \kappa\alpha(\beta+1)t^\beta - \kappa\beta(\beta-1)t^{\beta-2}\right]\right|_{t=0}.
    \end{align*}
    Evaluating the derivative at $t = 0$ yields:
    \[
        g''(0) =
        \begin{cases}
            -\infty, & \text{if } \beta \in (1,2), \\
            2\lambda^{-2/p}, & \text{if } \beta > 2, \\
            2\lambda^{-2/p-1}(\lambda - 2\kappa), & \text{if } \beta = 2.
        \end{cases}
    \]
    Since $g(0)$ is a minimizer, $g''(0) \ge 0$. Thus, either $\beta > 2$, or $\beta = 2$ with $\lambda \ge 2\kappa$. Specifically:
    \begin{itemize}
        \item $g''(0) > 0$ iff $\beta > 2$ or ($\beta = 2$ and $\lambda > 2\kappa$).
        \item $g''(0) = 0$ iff $\beta = 2$ and $\lambda = 2\kappa$.
    \end{itemize}

    Now assume $\beta = 2$ and $\lambda = 2\kappa$. Then $\bar{r}(t) = -\mu t^{\alpha+1} + \kappa\alpha t^3$. Since $\bar{r}''(0) = 0$, we have $g^{(3)}(0) = \bar{h}(0)\bar{r}''(0) = 0$. For the fourth derivative:
    \begin{align*}
        g^{(4)}(0) &= \bar{h}(0) \bar{r}^{(3)}(0) \\
        &= 2\lambda^{-2/p-1} \left.\frac{d^3}{dt^3}\left[-\mu t^{\alpha+1} + \kappa\alpha t^3\right]\right|_{t=0} \\
        &=
        \begin{cases}
            -\infty, & \text{if } \alpha \in (1,2), \\
            6\alpha\lambda^{-2/p}, & \text{if } \alpha > 2, \\
            12\lambda^{-2/p-1}(\lambda - \mu), & \text{if } \alpha = 2.
        \end{cases}
    \end{align*}
    Again, since $g(0)$ is a minimizer, $g^{(4)}(0) \ge 0$. Thus:
    \begin{itemize}
        \item $g^{(4)}(0) > 0$ iff $\alpha > 2$ or ($\alpha = 2$ and $\mu < \lambda$).
        \item $g^{(4)}(0) = 0$ iff $\alpha = 2$ and $\mu = \lambda$. In this case, $(\alpha, \beta, \lambda, \mu) = (2, 2, 2\kappa, 2\kappa)$, and $g(t) \equiv (2\kappa)^{-2/p}$ is constant. \qedhere
    \end{itemize}
\end{proof}

\begin{proposition}\label{prop:finite-minimizer}
    The set $\mathcal{S}$ is finite, except in the special case $(\alpha, \beta, \lambda, \mu) = (2, 2, 2\kappa, 2\kappa)$. Furthermore, if $(\alpha, \beta, \lambda, \mu) \neq (2, 2, 2\kappa, 2\kappa)$ and  $\tilde{\mathcal{S}}  \neq \emptyset$ , then $\mathcal{S}$ is a singleton  or doubleton. More precisely, this occurs exactly in the following four scenarios:
    \begin{description}
        \item[(i)] $\mathcal{S} = \left\{\sqrt{\frac{2-\beta}{2-\alpha}}\right\}$ with $\alpha, \beta \in (1,2)$,
        $\lambda = \frac{2\kappa \alpha}{p-2}\left(\frac{2-\beta}{2-\alpha}\right)^{\frac{\beta}{2}-1}$, $\mu = \frac{2\kappa \beta}{p-2}\left(\frac{2-\beta}{2-\alpha}\right)^{1-\frac{\alpha}{2}}$;
        \item[(ii)] $\mathcal{S} = \{0\}$ with $\beta = 2\le \alpha$, $\mu<\lambda = 2\kappa$;
        \item[(iii)] $\mathcal{S} = \{\infty\}$ with $\alpha = 2\le \beta$, $\lambda<\mu = 2\kappa$;
        \item[(iv)] $\mathcal{S} = \{0,\infty\}$ with $\min\{\alpha,\beta\} = 2 < \max\{\alpha,\beta\}, \,\lambda=\mu=2\kappa$.
    \end{description}
\end{proposition}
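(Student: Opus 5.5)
Finiteness comes first. On $(0,\infty)$, a minimizer $t_0$ satisfies $r(t_0)=0$, where
\[
r(t)=\lambda-\mu t^{p-2}+\kappa\alpha t^\beta-\kappa\beta t^{\beta-2}
\]
is the function from the proof of Proposition \ref{prop g(t)}. After multiplying by $t^{2-\beta}$ this becomes a generalized polynomial with at most four monomials. By Descartes' rule of signs for real exponents, such a sum has at most three positive zeros unless it vanishes identically. It vanishes identically only when the exponents coincide in pairs and the coefficients cancel. Matching terms forces $\beta=2$ and $p-2=\beta$, so $\alpha=2$; then $r(t)=\lambda-2\kappa+(2\kappa-\mu)t^2$, and identical vanishing gives exactly $(2,2,2\kappa,2\kappa)$. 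Hence, away from this case, $\mathcal S\cap(0,\infty)$ is finite, and adding $0$ and $\infty$ keeps $\mathcal S$ finite.

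Next assume $\tilde{\mathcal S}\neq\emptyset$ and split according to where the degenerate minimizer lies.

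\emph{Interior degenerate minimizer.} If it is some $t_0\in(0,\infty)$, Proposition \ref{prop g(t)}(ii) fixes $\alpha,\beta\in(1,2)$, $t_0$, $\lambda$ and $\mu$, which are the parameters of (i). It remains to show that $t_0$ is the only minimizer.
\begin{itemize}
\item Since $\beta<2$ and $\alpha<2$, Proposition \ref{prop g(0)} and its mirror version show that neither $0$ nor $\infty$ belongs to $\mathcal S$.
\item $t_0$ is a triple zero of $r$ (from $r=r'=r''=0$ at $t_0$). The Descartes bound for $t^{2-\beta}r$ is three zeros counted with multiplicity, so $t_0$ is the only positive zero of $r$ and hence the only critical point.
\end{itemize}
Therefore $\mathcal S=\{t_0\}$.

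\emph{Degenerate minimizer at $0$.} Proposition \ref{prop g(0)} gives $\beta=2$ and $\lambda=2\kappa$, with $g^{(4)}(0)>0$, so either $\alpha>2$, or $\alpha=2$ and $\mu<\lambda$. In that case
\[
r(t)=\lambda-2\kappa-\mu t^{\alpha}+\kappa\alpha t^{2}-2\kappa t^{0}\cdot 0=t^2(\kappa\alpha-\mu t^{\alpha-2}),
\]
so there is at most one interior critical point, $t_*=(\kappa\alpha/\mu)^{1/(\alpha-2)}$ when $\alpha>2$. Because $g$ increases right after $0$, this single critical point is a maximum, and $g$ decreases toward $g(\infty)$ afterwards. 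Thus no interior minimizer exists, and $\mathcal S=\{0\}$ or $\{0,\infty\}$; the latter happens exactly when $g(\infty)=g(0)$, i.e.\ $\mu=\lambda$.
\begin{itemize}
\item If $\alpha=2$, then $\mu<\lambda$, so $\mathcal S=\{0\}$; if $\alpha>2$ and $\mu<\lambda$, likewise $\mathcal S=\{0\}$. Together these give (ii).
\item If $\alpha>2$ and $\mu=\lambda=2\kappa$, then $\infty\in\mathcal S$ as well, giving (iv). Since $\alpha>2$ in (iv), the point $\infty$ is nondegenerate for $\tilde g$, which matches the table.
\end{itemize}

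\emph{Degenerate minimizer at $\infty$.} By the symmetry $\tilde g(t)=g(1/t)$, this case gives (iii) and the other half of (iv).

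\emph{Converse.} Under each listed parameter condition one checks directly that the stated set is exactly $\mathcal S$. For (i) this needs $g(t_0)<g(0),g(\infty)$; it follows because $g\to$ boundary values with $g''(0)=-\infty$ when $\beta<2$, so $g$ decreases near the ends and the unique critical point must be the global minimum.

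The main obstacle is uniqueness in case (i): I must make sure the Descartes multiplicity count applies to real exponents in the form I need (the standard generalized form allows counting with multiplicity). As a fallback I would check directly that $t\mapsto t^{2-\beta}r(t)$ is convex or concave on suitable pieces.
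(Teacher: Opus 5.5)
Your proof is correct, and it follows a different route from the paper's in its main steps.

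\textbf{Finiteness.} The paper notes that $r$ (there called $f$) is analytic and nonconstant, so its zeros are isolated. It then does a case-by-case asymptotic analysis at $0^+$ and at $\infty$ to confine those zeros to a compact subinterval. You instead apply the real-exponent (Laguerre) form of Descartes' rule to
\[
t^{2-\beta}r(t)=-\kappa\beta+\lambda t^{2-\beta}+\kappa\alpha t^{2}-\mu t^{\alpha}.
\]
This gives at once at most three positive critical points, counted with multiplicity. The multiplicity version does hold for real exponents: factor out $t^{c}$ with $c$ strictly between the two exponents at a sign change, differentiate, and use Rolle. So your convexity fallback is not needed.

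\textbf{Uniqueness in (i).} The paper substitutes $\mu=\frac{2\kappa\beta}{p-2}t_0^{2-\alpha}$ and shows that $\varphi=\lambda-r$ is strictly decreasing. It does this via $Q(t)=2t_0^{2-\alpha}t^{\alpha}+(\beta-2)-\alpha t^{2}$, which attains its maximum value $0$ at $t_0$. You instead combine $r(t_0)=r'(t_0)=r''(t_0)=0$, from the proof of Proposition \ref{prop g(t)}, with the same three-zero bound. Here the coefficient signs are $-,+,-,+$ in the exponent order $0<2-\beta<\alpha<2$.

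\textbf{Degenerate minimizer at $0$.} The paper argues by contradiction. The strict form of \eqref{g''>=0} at a hypothetical interior minimizer $t_1$ would force $\alpha<2$. You instead factor $r(t)=t^{2}(\kappa\alpha-\mu t^{\alpha-2})$ and read off the shape of $g$. Your displayed intermediate expression has a stray term; it should read $(\lambda-2\kappa)-\mu t^{\alpha}+\kappa\alpha t^{2}$.

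\textbf{What each route buys.} Your route is shorter and gives an explicit bound on the size of $\mathcal{S}$. It also makes the paper's intermediate step ($\tilde{\mathcal{S}}$ is a singleton) unnecessary, since each case determines $\mathcal{S}$ completely. It yields the converse for (ii)--(iv) for free. For the converse in (i), the paper's $\varphi$-monotonicity argument is more convenient, because it uses only the parameter values. With your method you would still need to verify $r(t_0)=r'(t_0)=r''(t_0)=0$ directly from the formulas for $\lambda,\mu,t_0$, and you only assert that this ``checks directly''. The paper itself proves only the forward direction, so this is not a gap relative to it.
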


\begin{proof}
    The set of minimizers satisfies the inclusion:
    \[
        \mathcal{S} \subset \left\{t \in (0,\infty) : g'(t) = 0\right\} \cup \{0, \infty\}.
    \]
    Since $g'(t) = 0$ is equivalent to
    \[
        \lambda - \mu t^{p-2} + \kappa \alpha t^\beta - \kappa \beta t^{\beta-2} = 0,
    \]
    it follows that
    \[
        \mathcal{S} \subset \left\{t \in (0,\infty) : \lambda - \mu t^{p-2} + \kappa \alpha t^\beta - \kappa \beta t^{\beta-2} = 0\right\} \cup \{0, \infty\}.
    \]

    Define the function
    \[
        f(t) := \lambda - \mu t^{p-2} + \kappa \alpha t^\beta - \kappa \beta t^{\beta-2}.
    \]
    We analyze two cases based on whether $f(t)$ is constant.

    \noindent{
        \textbf{Case 1: $f(t)$ is constant.}
    }
    For $f(t)$ to be constant, the coefficients of all powers of $t$ must vanish. This requires $\beta-2=0$, $p-2=\beta$, $\kappa\alpha=\mu$.
    These conditions imply $(\alpha, \beta, \mu) = (2, 2, 2\kappa)$. In this case,
    \[
        f(t) \equiv \lambda - \kappa \beta = \lambda - 2\kappa.
    \]
    If $\lambda \neq 2\kappa$, then $f(t)$ is a nonzero constant and has no zeros. Consequently, $\mathcal{S} \subset \{0, \infty\}$, which is finite. The exceptional case occurs when $\lambda = 2\kappa$, i.e., $(\alpha, \beta, \lambda, \mu) = (2, 2, 2\kappa, 2\kappa)$.

    \noindent{
        \textbf{Case 2: $f(t)$ is not constant.}
    }
    Then $f(t)$ is an analytic function on $(0,\infty)$, so its zeros are isolated. To prove that $\mathcal{S}$ is finite, it suffices to show that the zeros of $f(t)$ are bounded away from both $0$ and $\infty$.

    \textbf{Behavior as $t \to \infty$}:
    \begin{itemize}
        \item If $\alpha \neq 2$, the dominant term as $t \to \infty$ is either $-\mu t^{p-2}$ (if $\alpha > 2$) or $\kappa \alpha t^\beta$ (if $\alpha < 2$). In both cases, $|f(t)| \to \infty$.
        \item If $\alpha = 2$ and $\kappa \alpha \neq \mu$, the dominant term is $(\kappa \alpha - \mu)t^\beta$, and $|f(t)| \to \infty$.
        \item If $\alpha = 2$ and $\kappa \alpha = \mu$, then
        \[
            f(t) = \lambda - \kappa \beta t^{\beta-2}.
        \]
        Since $f(t)$ is not constant, $\beta \neq 2$. Then $f(t)$ has a unique positive zero $t = \left(\frac{\lambda}{\kappa \beta}\right)^{\frac{1}{\beta-2}}$.
    \end{itemize}

    \textbf{Behavior as $t \to 0^+$:}
    \begin{itemize}
        \item If $\beta > 2$, or if $\beta = 2$ and $\lambda \neq 2\kappa$, then $f(0^+) \neq 0$. By continuity, there exists $t^* > 0$ such that $f(t)$ has the same sign as $f(0^+)$ for $t \in (0, t^*)$.
        \item If $\beta = 2$ and $\lambda = 2\kappa$, then $f(0^+) = 0$. In this case,
        \[
            f(t) = -\mu t^\alpha + \kappa \alpha t^2.
        \]
        This function has a unique positive zero $t = \left(\frac{\kappa \alpha}{\mu}\right)^{\frac{1}{\alpha-2}}$, except when $\alpha = 2$. If $\alpha = 2$, then $f(t) = (2\kappa - \mu)t^2$. Since $f(t)$ is not constant, $\mu \neq 2\kappa$, and thus $f(t)$ has no positive zeros.
    \end{itemize}
    
    In all subcases, the positive zeros of $f(t)$ are bounded away from $0$ and $\infty$. Since they are isolated, there are only finitely many. Therefore, $\mathcal{S}$ is finite.

    Next, recall that $\tilde{\mathcal{S}} = \mathcal{S} \cap \big(\{t \in [0,+\infty) : g''(t) = 0\}\cup \{\infty:\tilde{g}''(0) = 0 \}\big)$. By Propositions \ref{prop g(t)} and \ref{prop g(0)}, we deduce that $\tilde{\mathcal{S}}$ is a singleton. Specifically:
    \begin{itemize}
        \item If there exists $t_0 \in \tilde{\mathcal{S}}$ with $0 < t_0 < \infty$, then Proposition \ref{prop g(t)} implies that $\alpha, \beta \in (1,2)$ and $t_0 = \sqrt{\frac{2-\beta}{2-\alpha}}$. Moreover, Proposition \ref{prop g(0)} shows that $0, \infty \notin \tilde{\mathcal{S}}$, so $\tilde{\mathcal{S}} = \{t_0\}$.
        \item If no such $t_0$ exists, then $\tilde{\mathcal{S}} \subset \{0, \infty\}$. By Proposition \ref{prop g(0)},
        \[
            \begin{cases}
                0 \in \tilde{\mathcal{S}} \implies (\beta, \lambda) = (2, 2\kappa), \\
                \infty \in \tilde{\mathcal{S}} \implies (\alpha, \mu) = (2, 2\kappa).
            \end{cases}
        \]
        If $\tilde{\mathcal{S}} = \{0, \infty\}$, then $(\alpha, \beta, \lambda, \mu) = (2, 2, 2\kappa, 2\kappa)$, a contradiction. Hence, $\tilde{\mathcal{S}}$ is a singleton.
    \end{itemize}

    We now prove that $\mathcal{S}$ itself is a singleton or doubleton in each scenario.

    \noindent{
        \textbf{Case 1: $\tilde{\mathcal{S}} = \{t_0\}$ with $t_0 = \sqrt{\frac{2-\beta}{2-\alpha}}$.}
    }
    In this case, $\alpha, \beta \in (1,2)$. We claim that $\mathcal{S} = \{t_0\}$. First, note that $0, \infty \notin \mathcal{S}$: if $0 \in \mathcal{S}$, then $\beta \geq 2$, contradicting to $\beta \in (1,2)$. Similarly, $\infty \notin \mathcal{S}$.

    Now, suppose for contradiction that there exists $t_1 \in \mathcal{S} \setminus \{t_0\}$ with $0 < t_1 < \infty$. Since $g'(t_0) = g'(t_1) = 0$, we have
    \begin{equation}\label{eq:gprime}
        \lambda = \mu t^{p-2} + \kappa \beta t^{\beta-2} - \kappa \alpha t^\beta \quad \text{for } t = t_0, t_1.
    \end{equation}
    Define the function
    \[
        \varphi(t) := \mu t^{p-2} + \kappa \beta t^{\beta-2} - \kappa \alpha t^\beta.
    \]
    Then $\varphi(t_0) = \varphi(t_1) = \lambda$. We compute the derivative:
    \begin{align*}
        \varphi'(t) &= (p-2)\mu t^{p-3} + \kappa \beta (\beta-2) t^{\beta-3} - \kappa \alpha \beta t^{\beta-1} \\
        &= t^{\beta-3} \left[(p-2)\mu t^{\alpha} + \kappa \beta (\beta-2) - \kappa \alpha \beta t^2 \right].
    \end{align*}
    Substitute $\mu = \frac{2\kappa \beta}{p-2} t_0^{2-\alpha}$ (by Proposition \ref{prop g(t)}-(ii)):
    \[
        \varphi'(t) = \kappa \beta t^{\beta-3} \left[2 t_0^{2-\alpha} t^{\alpha} + (\beta-2) - \alpha t^2 \right] =: \kappa \beta t^{\beta-3} Q(t).
    \]
    Now, compute
    \[
        Q'(t) = 2\alpha t_0^{2-\alpha} t^{\alpha-1} - 2\alpha t = 2\alpha t \left[ (t_0/t)^{2-\alpha} - 1 \right].
    \]
    Since $\alpha < 2$, we have $Q'(t) > 0$ for $t\in(0,t_0)$; $Q'(t) < 0$ for $t\in (t_0,\infty)$. Thus, $Q(t)$ is increasing on $(0, t_0)$ and decreasing on $(t_0, \infty)$, with maximum at $t = t_0$. Note that
    \[
        Q(t_0) = 2 t_0^2 + (\beta-2) - \alpha t_0^2 = (2-\alpha)t_0^2 + (\beta-2) = 0,
    \]
    since $t_0^2 = \frac{2-\beta}{2-\alpha}$. Therefore, $Q(t) < 0$ for $t \neq t_0$, and hence $\varphi'(t) < 0$ for $0<t\neq t_0$.  This implies that $\varphi(t)$ decreases strictly in $(0,\infty)$, contradicting $\varphi(t_0) = \varphi(t_1)$. Thus, $\mathcal{S} = \{t_0\}$.

    \noindent \textbf{
        Case 2: $\tilde{\mathcal{S}} = \{0\}$ or $\tilde{\mathcal{S}} = \{\infty\}$.
    }
    By symmetry, we consider only $\tilde{\mathcal{S}} = \{0\}$. Then $(\beta, \lambda) = (2, 2\kappa)$, and either $\alpha > 2$ or $\alpha = 2$ with $\mu < 2\kappa$.
    We claim that there is no $t_1\in\mathcal{S}$ with $0<t_1<\infty$. If not, since $\tilde{\mathcal{S}}=\{0\}$, we have $g''(0) = 0$, but $g''(t_1) > 0$. The condition \eqref{g''>=0} gives
    \[
        \alpha(2-\alpha)\kappa t_1^\beta + \kappa t_1^{\beta-2} \alpha\beta > (p-2)\lambda.
    \]
    Substitute $\beta = 2$, $\lambda = 2\kappa$:
    \[
        \alpha(2-\alpha)\kappa t_1^2 + 2\kappa \alpha > 2\kappa \alpha.
    \]
    Simplifying, we get $\alpha(2-\alpha) t_1^2 > 0$, which implies $\alpha < 2$. This contradicts the assumption that $\alpha > 2$ or $\alpha = 2$ with $\mu < 2\kappa$.
   
    Hence $\mathcal{S} = \{0\}$ or $\{0,\infty\}$. More precisely,
    \begin{itemize}
        \item If $\beta=2\le \alpha$, $\mu<\lambda = 2\kappa$, then $\mathcal{S} = \{0\}$.
        \item If $\beta=2<\alpha$, $\mu=\lambda = 2\kappa$, then $S=\{0,\infty\}$.
    \end{itemize}

    The case $\tilde{\mathcal{S}} = \{\infty\}$ is analogous by symmetry.
\end{proof}

\subsection{Some Elementary Inequalities}
\begin{lemma}\label{lem 1}
    For any $m>0$, there exists $C_1$ such that for any $x,y\in\R$, there holds
    \[ |x+y|^\iota - |x|^\iota \le \iota |x|^{\iota-2}xy + \left(\frac{\iota(\iota-1)}{2}+m\right)|x|^{\iota-2}|y|^2 + C_1|y|^\iota,\quad \text{if}\,\, \iota\ge 2;\]
    \[ |x+y|^\iota - |x|^\iota \le \iota |x|^{\iota-2}xy + \left(\frac{\iota(\iota-1)}{2}+m\right)\frac{(|x|+C_1|y|)^\iota}{|x|^2+|y|^2}|y|^2,\quad \text{if}\,\, 1<\iota<2.\]
\end{lemma}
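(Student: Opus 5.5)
By $\iota$-homogeneity of both sides we reduce to a normalized variable. For $x=0$ the inequalities are trivial: the right-hand sides equal $C_1|y|^\iota$ and $(\iota(\iota-1)/2+m)C_1^\iota|y|^\iota$ respectively, so they hold once $C_1\ge 1$ in the first case and $(\iota(\iota-1)/2+m)C_1^\iota\ge 1$ in the second. For $x\neq 0$ we divide by $|x|^\iota$ and set $t=y/x$, using the symmetry $x\mapsto -x$, $y\mapsto -y$ to assume $x>0$. It then suffices to prove, for every $t\in\R$,
\[
\phi(t):=|1+t|^\iota-1-\iota t \le \Big(\tfrac{\iota(\iota-1)}2+m\Big)t^2 + C_1|t|^\iota \quad(\iota\ge2),
\]
\[
\phi(t)\le \Big(\tfrac{\iota(\iota-1)}2+m\Big)\frac{(1+C_1|t|)^\iota}{1+t^2}\,t^2 \quad(1<\iota<2).
\]

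The argument splits into small and large $|t|$. By Taylor's theorem at $t=0$, $\phi(t)=\frac{\iota(\iota-1)}2 t^2+o(t^2)$, since $|1+t|^\iota$ is $C^2$ near $t=0$. Hence there is $\delta=\delta(m,\iota)>0$ with $\phi(t)\le(\frac{\iota(\iota-1)}2+m)t^2$ for $|t|\le\delta$. This already gives the first inequality on $|t|\le\delta$. For the second, note that $(1+C_1|t|)^\iota/(1+t^2)\ge 1$ for $|t|\le\delta$ as soon as $C_1\delta\ge 1$, say, since $(1+C_1|t|)^\iota\ge 1+C_1|t|\ge 1+t^2$ when $C_1\ge\delta\ge|t|$. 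So on $|t|\le\delta$ the second inequality also holds, provided $C_1\ge \max(1,\delta)$.

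For $|t|\ge\delta$ we use growth comparison. When $\iota\ge2$ we have $\phi(t)\le (1+|t|)^\iota+\iota|t|\lesssim |t|^\iota$ on $|t|\ge\delta$, since $(1+|t|)^\iota\le (1+1/\delta)^\iota|t|^\iota$ and $|t|\le \delta^{1-\iota}|t|^\iota$. Choosing $C_1$ large makes $C_1|t|^\iota$ dominate, with the $t^2$ term nonnegative. When $1<\iota<2$, the right-hand side is at least $c\,\frac{(C_1|t|)^\iota t^2}{1+t^2}\ge c\,C_1^\iota \frac{\delta^2}{1+\delta^2}|t|^\iota$ on $|t|\ge\delta$, because $t^2/(1+t^2)$ is increasing in $|t|$. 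Meanwhile $\phi(t)\le (1+|t|)^\iota+\iota|t|\le K(\delta,\iota)|t|^\iota$ there. Enlarging $C_1$ closes this case.

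The only delicate point is making the choice of $C_1$ consistent across regions. We fix $\delta$ first from $m$ via Taylor, then take $C_1$ large depending on $\delta$. All the required conditions on $C_1$ are lower bounds, so they are compatible. No genuine obstacle is expected beyond bookkeeping, namely the uniform Taylor remainder near $t=0$, which is fine because $\iota>1$ makes $|1+t|^\iota$ smooth on $|t|<1$.
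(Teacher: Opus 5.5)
Your proof is correct. The paper gives no argument for this lemma; it only cites \cite[Lemma~2.4]{Figalli2022}, so your proposal replaces a citation with a self-contained verification.

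Your route is the natural elementary one. You use $\iota$-homogeneity and the symmetry $(x,y)\mapsto(-x,-y)$ to normalize to $x=1$. On $|t|\le\delta<1$ you apply Taylor, $\phi(t)=\frac{\iota(\iota-1)}{2}(1+\xi)^{\iota-2}t^2$, and absorb the error into the extra $m t^2$. On $|t|\ge\delta$ you use the crude bound $\phi(t)\le K(\delta,\iota)|t|^\iota$.

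Compared with the bare citation, your argument has two advantages. It shows explicitly that $C_1$ depends only on $(\iota,m)$ through $\delta(m,\iota)$, since every constraint on $C_1$ is a lower bound. It also checks directly the particular form $\frac{(|x|+C_1|y|)^\iota}{|x|^2+|y|^2}|y|^2$ used here for $1<\iota<2$, including the degenerate case $x=0$, where one needs $(\frac{\iota(\iota-1)}{2}+m)C_1^\iota\ge 1$.

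One wording slip. In the case $1<\iota<2$, $|t|\le\delta$, the condition you need and actually use is $C_1\ge\delta$, which gives $C_1|t|\ge t^2$. The phrase ``as soon as $C_1\delta\ge 1$'' is not the right condition. Since you finally impose $C_1\ge\max(1,\delta)$, the argument itself is unaffected.
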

For the proof of it, see \cite[Lemma 2.4]{Figalli2022}.

\begin{lemma}\label{Lem2}
    For any $m>0$, there exists $C_2>0$ such that for any $x,y,z,w\in\R$ with $x,z>0$, there holds
    \begin{enumerate}[(i)]
        \item If $\alpha,\beta \ge 2$, then
        \begin{equation}\label{alpha beta>2}
            \begin{aligned}
                & |x+y|^\alpha|z+w|^\beta -x^\alpha z^\beta\\
                &\le \alpha x^{\alpha-1}z^\beta y + \beta x^\alpha z^{\beta -1}w \\
                &\quad + \left(\frac{\alpha(\alpha-1)}{2} + m\right)x^{\alpha-2}z^\beta y^2 + \left(\frac{\beta(\beta-1)}{2}+m\right)x^\alpha z^{\beta-2} w^2 + \alpha\beta x^{\alpha-1}z^{\beta-1}yw \\
                &\quad + C_2 \Big( x^\alpha|w|^\beta + z^\beta|y|^\alpha + x^{\alpha-1}z^{\beta-2}|y||w|^2 + x^{\alpha-2}z^{\beta-1}|y|^2|w| + |y|^\alpha|w|^\beta \Big).
            \end{aligned}
        \end{equation}
        In particular, if $\alpha=2$ and $\beta>2$, then
        \begin{equation}
            \begin{aligned}
                & |x+y|^2|z+w|^\beta -x^2 z^\beta \\
                &\le 2xz^\beta y + \beta x^2z^{\beta-1} w + z^\beta y^2 + \left(\frac{\beta(\beta-1)}{2}+m\right)x^2z^{\beta-2}w^2 + 2\beta xz^{\beta-1}yw \\
                &\quad + C_2 \Big( x^2|w|^\beta + xz^{\beta-2}|y||w|^2 + z^{\beta-1}|y|^2|w| + |y|^2|w|^\beta \Big).
            \end{aligned}
        \end{equation}

        If $\alpha=\beta =2$, then
        \begin{equation}\label{alpha=beta=2}
            \begin{aligned}
                & |x+y|^2|z+w|^2 - x^2z^2 \\
               & \le{}  2 xz^2y + 2x^2zw + z^2y^2 + x^2w^2 + 4xyzw  \\
                & + C_2(x|y||w|^2+z|y|^2|w| + |y|^2|w|^2).
            \end{aligned}
        \end{equation}
        \item If $1<\alpha<2\le \beta$, then
        \begin{equation}\label{alpha<2<beta}
            \begin{aligned}
                & |x+y|^\alpha|z+w|^\beta -x^\alpha z^\beta\\
                &\le \alpha x^{\alpha-1}z^\beta y + \beta x^\alpha z^{\beta -1}w \\
                &\quad + \left(\frac{\alpha(\alpha-1)}{2} + m\right)x^{\alpha-2}z^\beta y^2 + \left(\frac{\beta(\beta-1)}{2}+m\right)x^\alpha z^{\beta-2} w^2 + \alpha\beta x^{\alpha-1}z^{\beta-1}yw \\
                &\quad + C_2 \Big( x^\alpha|w|^\beta + x^{\alpha-1}z^{\beta-2}|y||w|^2 + x^{-1}{z}^{\beta-1} |y|^{\alpha+1}|w|+ x^{-1}z^{\beta}|y|^{\alpha+1} + |y|^\alpha|w|^\beta \Big).
            \end{aligned}
        \end{equation}
        In particular, if $\beta=2$, then
        \begin{equation}
            \begin{aligned}
                & |x+y|^\alpha|z+w|^2 - x^\alpha z^2 \\
                &\le \alpha x^{\alpha-1}z^2y + 2x^\alpha zw\\
                &\quad + \left(\frac{\alpha(\alpha-1)}{2}+m\right) x^{\alpha-2}z^2y^2 + x^\alpha w^2 + 2\alpha x^{\alpha-1}zyw \\
                &\quad + C_2 \Big( x^{\alpha-1}|y| |w|^2 + x^{-1}z^2 |y|^{\alpha+1} + x^{-1}z |y|^{\alpha+1}|w| + |y|^\alpha |w|^2\Big).
            \end{aligned}
        \end{equation}
        \item If $\alpha,\beta \in (1,2)$, then
        \begin{equation}\label{<<2}
            \begin{aligned}
                & |x+y|^\alpha|z+w|^\beta -x^\alpha z^\beta\\
                &\le \alpha x^{\alpha-1}z^\beta y + \beta x^\alpha z^{\beta -1}w \\
                &\quad + \left(\frac{\alpha(\alpha-1)}{2} + m\right)x^{\alpha-2}z^\beta y^2 + \left(\frac{\beta(\beta-1)}{2}+m\right)x^\alpha z^{\beta-2} w^2 + \alpha\beta x^{\alpha-1}z^{\beta-1}yw \\
                &\quad + C_2 \Big( x^\alpha z^{-1}|w|^{\beta+1} + x^{-1}z^\beta |y|^{\alpha + 1} \\
                & \phantom{\quad + C_2 \Big( } + x^{\alpha-1}z^{\frac{\beta-1}{2}}|y||w|^{\frac{\beta+1}{2}} + x^{\frac{\alpha-1}{2}}z^{\beta-1} |y|^{\frac{\alpha+1}{2}}|w| + |y|^\alpha|w|^\beta \Big).
            \end{aligned}
        \end{equation}
    \end{enumerate}
    By symmetry, similar inequalities holds for $\alpha\ge \beta$.
\end{lemma}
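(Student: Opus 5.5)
The approach is to reduce everything to Taylor expansions in the two variables with explicit remainders. Since the statement is homogeneous under the scaling $(x,y,z,w)\mapsto(sx,sy,sz,sw)$, and also under separate scalings of the pairs $(x,y)$ and $(z,w)$, I would factor
\[
|x+y|^\alpha|z+w|^\beta = x^\alpha z^\beta\,|1+a|^\alpha|1+b|^\beta,\qquad a=y/x,\ b=w/z,
\]
so it suffices to prove the normalized inequality for $F(a,b)=|1+a|^\alpha|1+b|^\beta-1$. The target is $\alpha a+\beta b+(\tfrac{\alpha(\alpha-1)}2+m)a^2+(\tfrac{\beta(\beta-1)}2+m)b^2+\alpha\beta ab$ plus the listed error terms. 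Divided by $x^\alpha z^\beta$, these error terms become, in case (i), $|b|^\beta+|a|^\alpha+|a||b|^2+|a|^2|b|+|a|^\alpha|b|^\beta$, and analogous monomials in cases (ii) and (iii).

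I would split into the region $|a|,|b|\le\delta$ and its complement. In the small region I would Taylor expand $F$ to second order around $(0,0)$; $F$ is smooth there. The second-order polynomial is exactly $\alpha a+\beta b+\tfrac{\alpha(\alpha-1)}2a^2+\tfrac{\beta(\beta-1)}2b^2+\alpha\beta ab$, and the cubic remainder is $O(|a|^3+|b|^3+a^2|b|+|a|b^2)$. The mixed cubic terms are absorbed directly by $C_2(|a|b^2+a^2|b|)$, or by their fractional-power analogues in case (iii), which dominate for small arguments because the exponents are at most $3$. For the pure cubes, $|a|^3\le \delta |a|^2$ is absorbed into $m a^2$ once $\delta$ is chosen small depending on $m$.

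In the complement, where say $|a|\ge\delta$, I would use the product structure. Write $|1+a|^\alpha|1+b|^\beta-1 = (|1+a|^\alpha-1)|1+b|^\beta + (|1+b|^\beta-1)$. To each factor apply Lemma~\ref{lem 1} with $x=1$, which gives one-variable bounds of the form $|1+a|^\alpha-1\le \alpha a+(\cdots)a^2+C|a|^\alpha$ for $\alpha\ge2$, and the weighted version for $1<\alpha<2$. Then expand the product $|1+b|^\beta\le C(1+|b|^\beta)$ and compare all resulting terms with the target. Large $|a|$ makes $|a|^\alpha$ (case $\alpha\ge2$) or $|a|^{\alpha+1}$ (case $\alpha<2$, using $|a|\ge\delta$) dominate every quadratic and linear term in $a$ with constants depending on $\delta$. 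The linear and quadratic terms of the target that have the wrong sign are controlled in the same way: for instance $|\alpha\beta ab|\lesssim a^2+b^2\lesssim_\delta |a|^{\alpha+1}+\dots$, using $|a|\ge\delta$. The region where $|b|\ge\delta$ and $|a|$ is small is symmetric.

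The main obstacle will be cases (ii) and (iii), where $\alpha<2$. There the natural error $|a|^\alpha$ is not available in the listed terms, and only $|a|^{\alpha+1}$ or mixed terms like $|a|^{(\alpha+1)/2}|b|$ appear. Near $a=-1$ the function $|1+a|^\alpha$ is not $C^2$, and for $1\ll|a|$ we need $|a|^{\alpha}$-growth to be beaten. I would treat this by noting that on $|a|\ge\delta$ we have $|a|^\alpha\le\delta^{-1}|a|^{\alpha+1}$, and mixed products $|a|^\alpha|b|$ can be split by Young's inequality into $|a|^{\alpha+1}$, $|b|^{\beta+1}$, $|a|^\alpha|b|^\beta$, and the listed mixed terms, adjusting exponents case by case. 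Finally, I would multiply back by $x^\alpha z^\beta$ to recover each stated form, and the symmetric cases follow by interchanging roles.
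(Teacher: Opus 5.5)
Your route differs from the paper's, and it works for the general inequalities, but it does not prove the sharpened sub-statements ($\alpha=2$ or $\beta=2$).

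\textbf{Comparison with the paper.} The paper also normalizes to $x=z=1$, but it never splits into regions. It multiplies the two one-variable bounds of Lemma~\ref{lem 1} globally, which is legitimate because $|1+y|^\alpha$ and $|1+w|^\beta$ are nonnegative. It then expands and absorbs the surplus monomials by interpolating exponents (weighted AM--GM, the convex-hull picture of Figure~\ref{figure}). For $\alpha<2$ it uses $(1+C_1|y|)^\alpha\le 1+\epsilon+D|y|^\alpha$ to turn the weighted quadratic into $(\frac{\alpha(\alpha-1)}{2}+m)y^2+C|y|^{\alpha+1}$.

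For the general bounds in (i)--(iii) your split is valid and easier to check:
\begin{itemize}
\item On $\{|a|,|b|\le\delta\}$ the whole cubic Taylor remainder is at most $C\delta(a^2+b^2)$, so it disappears into the $m$-slack.
\item Off the box one variable is bounded below, so crude bounds plus the listed monomials dominate everything. There you should bound $\bigl||1+a|^\alpha-1\bigr|\lesssim_\delta|a|^\alpha$ in absolute value, rather than multiplying a signed upper bound by an upper bound for $|1+b|^\beta$.
\end{itemize}
One slip: in case (ii) the listed mixed term $|a|^{\alpha+1}|b|$ has degree $\alpha+2>3$. It therefore does not dominate $a^2|b|$ near the origin, as you claim. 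Use $a^2|b|\le\delta a^2$ and the $m$-slack instead.

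\textbf{The gap.} The sharpened cases are $\alpha=2<\beta$ in (i), $\beta=2$ in (ii), and $\alpha=\beta=2$. There the coefficient of $y^2$ (resp.\ $w^2$) is exactly $1$, with no $m$. The pure error $z^\beta|y|^2$ (resp.\ $x^\alpha|w|^2$) is also absent.

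Your large-region step uses $|a|^\alpha$ on $|a|\ge\delta$ to swallow the linear and quadratic terms in $a$. In these cases there is nothing to absorb into, and no slack at all. At $w=0$ the refined inequality in (i) is the identity $|x+y|^2z^\beta-x^2z^\beta=z^\beta(2xy+y^2)$. So any crude remainder such as $C_\delta a^2(1+|b|^\beta)$ is fatal.

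These versions are not cosmetic. In the proof of Lemma~\ref{Lemma_t>0}, a term $z^\beta|y|^2$ would make $\sigma^{2-p}f(\alpha,\beta,u,v)$ contain $C\sigma^{2-\alpha}\gamma=C\gamma$ with $C$ not small, and that term cannot be absorbed.

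\textbf{The fix.} Use the paper's device: keep the polynomial factor exact and multiply. For example, expand $(1+a)^2\bigl(1+\beta b+(\frac{\beta(\beta-1)}{2}+m)b^2+C_1|b|^\beta\bigr)$, which is allowed since $(1+a)^2\ge 0$. This produces $2a+a^2$ and $2\beta ab$ exactly. The only leftovers are $a^2b$, $ab^2$, $a^2b^2$, $|a||b|^\beta$ and $a^2|b|^\beta$. Since $\beta>2$, these are absorbed by $a^2|b|$, $|a|b^2$, $|b|^\beta$ and $a^2|b|^\beta$. The case $\beta=2$ of (ii) is analogous, keeping $(1+b)^2$ exact. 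The case $\alpha=\beta=2$ is a polynomial identity.
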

\begin{proof}
    Without loss of generality, let $x=z=1$. For \emph{(i)}, it is equal to show
    \begin{equation}
        \begin{aligned}
            &|1+y|^\alpha |1+w|^\beta \\
            & \le{}  1 + \alpha y + \beta w  \\
            & \quad  +  \left(\frac{\alpha(\alpha-1)}{2} + m\right) y^2 + \left(\frac{\beta(\beta-1)}{2}+m\right) w^2 + \alpha\beta yw \\
            & \quad  + C_2\Big( |y|^{\alpha} + |w|^{\beta} + |y||w|^2 + |y|^2|w| + |y|^\alpha |w|^\beta \Big).
        \end{aligned}
    \end{equation}
    By Lemma \ref{lem 1},
    \begin{align*}
        & |1+y|^\alpha|1+w|^\beta \\
        &\le \left[ 1 + \alpha y + \left( \frac{\alpha(\alpha-1)}{2}+m\right)|y|^2 + C_1|y|^\alpha \right]\\
        & \quad \cdot \left[ 1 + \beta w + \left( \frac{\beta(\beta-1)}{2}+m\right)|w|^2 + C_1|w|^\beta \right] \\
        &= 1 + \alpha y + \beta w + \left( \frac{\alpha(\alpha-1)}{2}+m\right)|y|^2 + \left( \frac{\beta(\beta-1)}{2}+m\right)|w|^2 + \alpha\beta yw \\
        &\quad + C\Big( |y|^\alpha + |w|^\beta + |y||w|^2 + |y|^2 |w| + |y|^\alpha|w|^\beta \\
        &\qquad + |y|^2|w|^2 + |y||w|^\beta + |y|^\alpha|w| + |y|^2|w|^\beta + |y|^\alpha|w|^2 \Big).
    \end{align*}
    Since $1+|y|^\alpha \gtrsim |y|^t$ for any  {$t\in (0,\alpha]$}, we can absorb $|y|^2|w|^\beta$ and $|y||w|^\beta$ by $|y|^\alpha|w|^\beta + |w|^\beta$. Similarly, $|y|^\alpha|w|$ and $|y|^\alpha|w|^2$ can be absorbed by $|y|^\alpha + |y|^\alpha|w|^\beta$. Lastly, $|y|^2|w|^2$ is absorbed by $|y||w|^2 + |y|^\alpha|w|^2$, and hence by $|y||w|^2 + |y|^\alpha|w|^\beta + |y|^\alpha$ for the same reason.

    If $\alpha=2$, $\beta>2$, then
    \begin{align*}
        & |1+y|^\alpha|1+w|^\beta \\
        &\le (1+2y+|y|^2)\cdot \left[ 1 + \beta w + \left( \frac{\beta(\beta-1)}{2}+m\right)|w|^2 + C_1|w|^\beta \right] \\
        &= 1 + 2y + \beta w + |y|^2 + \left( \frac{\beta(\beta-1)}{2}+m\right)|w|^2 + 2\beta yw \\
        &\quad + C\Big( |y|^2|w| + |y||w|^2 + |y|^2|w|^2 + |w|^\beta + |y||w|^\beta + |y|^2|w|^\beta \Big).
    \end{align*}
    Since $\beta>2$, we can absorb $|y||w|^\beta$ by $|y|^2|w|^\beta +|w|^\beta$ and $|y|^2|w|^2$ by $|y|^2|w| + |y|^2|w|^\beta$. The case $\alpha=\beta=2$ is trivial and we omit the proof.

    For \emph{(ii)}, it is equal to show
    \begin{equation}
        \begin{aligned}
            & |1+y|^\alpha|1+w|^\beta \\
            &\le 1 + \alpha y + \beta w + \left( \frac{\alpha(\alpha-1)}{2}+m\right)|y|^2 + \left( \frac{\beta(\beta-1)}{2}+m\right)|w|^2 + \alpha\beta yw \\
            &\quad + C_2 \Big( |w|^{\beta} + |y||w|^2 + |y|^{\alpha+1}|w| + |y|^{\alpha+1} + |y|^\alpha|w|^\beta \Big).
        \end{aligned}
    \end{equation}
    By Lemma \ref{lem 1}, fix a $m'>0$ to be determined,
    \begin{align*}
        & |1+y|^\alpha|1+w|^\beta \\
        &\le \left[ 1 + \alpha y + \left( \frac{\alpha(\alpha-1)}{2}+m'\right)\frac{(1+C_1|y|)^\alpha }{1+|y|^2} |y|^2\right]\cdot \\
        & \quad \left[ 1 + \beta w + \left( \frac{\beta(\beta-1)}{2}+m'\right)|w|^2 + C_1|w|^\beta \right] \\
        &= 1 + \alpha y + \beta w + \left( \frac{\alpha(\alpha-1)}{2}+m'\right)\frac{(1+C_1|y|)^\alpha }{1+|y|^2}|y|^2 \\
        & \quad+ \left( \frac{\beta(\beta-1)}{2}+m'\right)|w|^2 + \alpha\beta yw \\
        & \quad+ C\Big( |w|^\beta + |y||w|^2 +|y||w|^\beta  + \frac{(1+C_1|y|)^\alpha }{1+|y|^2}|y|^2\big(|w| + |w|^2 + |w|^\beta\big) \Big).
    \end{align*}
    Since $\alpha>1$, for any $\epsilon>0$, there exists $D(\epsilon,C_1)>0$ such that
    \begin{align*}
        (1+C_1|y|)^\alpha \le{}& (1+\epsilon) + D|y|^\alpha \\
        \le{}& (1+|y|^2)\big( (1+\epsilon) + \frac 12 D|y|^{\alpha-1}\big).
    \end{align*}
    Thus,
    \[
        \left( \frac{\alpha(\alpha-1)}{2}+m'\right)\frac{(1+C_1|y|)^\alpha }{1+|y|^2} |y|^2 \le  \left( \frac{\alpha(\alpha-1)}{2}+m'\right)(1+\epsilon) |y|^2 + C |y|^{\alpha+1},
    \]
    \[ 
        \frac{(1+C_1|y|)^\alpha }{1+|y|^2} |y|^2|w| \le C (|y|^2|w| + |y|^{\alpha+1}|w|).
    \]
    \[ 
        \frac{(1+C_1|y|)^\alpha }{1+|y|^2} |y|^2|w|^\beta < (1+C_1|y|)^\alpha|w|^\beta \lesssim |w|^\beta + |y|^\alpha|w|^\beta.
    \]
    By $\beta\ge 2$, $|w|^2 \lesssim |w|+|w|^\beta$.
    Let $m'>0$ be that $\frac{\alpha(\alpha-1)}{2}+m = \left( \frac{\alpha(\alpha-1)}{2}+m'\right)(1+\epsilon)$, then
    \begin{align*}
        & |1+y|^\alpha|1+w|^\beta\\
        &\le 1 + \alpha y + \beta w + \left( \frac{\alpha(\alpha-1)}{2}+m\right)|y|^2 + \left( \frac{\beta(\beta-1)}{2}+m\right)|w|^2 + \alpha\beta yw \\
        &\quad + C \Big( |y|^{\alpha+1} + |w|^{\beta} + |y||w|^2 +  |y||w|^\beta + |y|^2|w| + |y|^{\alpha+1}|w| + |y|^\alpha|w|^\beta\Big).
    \end{align*}
    Since $|y||w|^\beta \lesssim |w|^\beta + |y|^\alpha|w|^\beta$ and
    \begin{align*}
        |y|^2|w| ={}& (|y||w|^2)^{1-1/\alpha} (|y|^{\alpha+1})^{1-1/\alpha} (|y|^{\alpha+1}|w|)^{2/\alpha-1} \\
        \lesssim{}& |y||w|^2 + |y|^{\alpha+1} + |y|^{\alpha+1}|w|,
    \end{align*}
    we are done. If $\beta=2$, then
    \begin{align*}
        & |1+y|^\alpha|1+w|^2 \\
        &\le \left[ 1 + \alpha y + \left( \frac{\alpha(\alpha-1)}{2}+m'\right)\frac{(1+C_1|y|)^\alpha }{1+|y|^2} |y|^2\right]\cdot (1 + 2w + |w|^2) \\
        &= 1 + \alpha y + 2w + \left( \frac{\alpha(\alpha-1)}{2}+m'\right)\frac{(1+C_1|y|)^\alpha }{1+|y|^2}|y|^2 + |w|^2 + 2\alpha yw \\
        &\quad + C\Big( |y||w|^2 + \frac{(1+C_1|y|)^\alpha }{1+|y|^2}|y|^2(|w| + |w|^2) \Big)\\
        &\le  1 + \alpha y + 2w + \left( \frac{\alpha(\alpha-1)}{2}+m\right)|y|^2 + |w|^2 + 2\alpha yw \\
        &\quad + C\Big( |y|^{\alpha+1} + |y||w|^2 + |y|^{\alpha+1}|w| + |y|^{\alpha}|w|^2\Big).
    \end{align*}

    Finally, for \emph{(iii)}, it equals to show
    \begin{equation}\label{elementary 1}
        \begin{aligned}
            &|1+y|^\alpha |1+w|^\beta\\
            & \le{}  1 + \alpha y + \beta w  \\
            & \quad  +  \left(\frac{\alpha(\alpha-1)}{2} + m\right) y^2 + \left(\frac{\beta(\beta-1)}{2}+m\right) w^2 + \alpha\beta yw \\
            & \quad  + C_2\Big( |y|^{\alpha+1} + |w|^{\beta+1} + |y||w|^{\frac{\beta+1}{2}} + |y|^{\frac{\alpha+1}{2}}|w| + |y|^\alpha |w|^\beta \Big) .
        \end{aligned}
    \end{equation}
    By Lemma \ref{lem 1} and the estimates in the proof of \emph{(ii)},
    \begin{align*}
        & |1+y|^\alpha|1+w|^\beta \\
        &\le \left[ 1 + \alpha y + \left( \frac{\alpha(\alpha-1)}{2}+m'\right)\frac{(1+C_1|y|)^\alpha }{1+|y|^2}|y|^2 \right]\cdot \\
        & \quad \left[ 1 + \beta w + \left( \frac{\beta(\beta-1)}{2}+m'\right)\frac{(1+C_1|w|)^\beta }{1+|w|^2}|w|^2\right] \\
        &= 1 + \alpha y + \beta w + \left( \frac{\alpha(\alpha-1)}{2}+m'\right)\frac{(1+C_1|y|)^\alpha }{1+|y|^2}|y|^2 \\
        & \quad + \left( \frac{\beta(\beta-1)}{2}+m'\right)\frac{(1+C_1|w|)^\beta }{1+|w|^2}|w|^2 + \alpha\beta yw \\
        & \quad + C\Big( \frac{(1+C_1|y|)^\alpha }{1+|y|^2}|y|^2|w| +\frac{(1+C_1|w|)^\beta }{1+|w|^2}|y||w|^2 \\
        & \quad\quad  + \frac{(1+C_1|y|)^\alpha }{1+|y|^2}\frac{(1+C_1|w|)^\beta }{1+|w|^2}|y|^2|w|^2 \Big) \\
        &\le 1 + \alpha y + \beta w + \left( \frac{\alpha(\alpha-1)}{2}+m\right)|y|^2 + \left( \frac{\beta(\beta-1)}{2}+m\right)|w|^2 + \alpha\beta yw \\
        & \quad + C \bigg( |y|^{\alpha+1} + |w|^{\beta+1} + \frac{(1+C_1|y|)^\alpha }{1+|y|^2}|y|^2|w| + \frac{(1+C_1|w|)^\beta }{1+|w|^2}|y||w|^2\\
        & \qquad + \frac{(1+C_1|y|)^\alpha }{1+|y|^2}\frac{(1+C_1|w|)^\beta }{1+|w|^2}|y|^2|w|^2 \bigg).
    \end{align*}
    Since
    \[
        (1+C_1|y|)^\alpha \lesssim (1+|y|^2)(|y|^{\frac{\alpha-1}{2}} + |y|^{\alpha-2}),
    \]
    we obtain
    \[
        \frac{(1+C_1|y|)^\alpha }{1+|y|^2}|y|^2|w| \lesssim |y|^{\frac{\alpha+1}{2}}|w| + |y|^\alpha|w|.
    \]
    Similarly,
    \[
        \frac{(1+C_1|w|)^\beta }{1+|w|^2}|y||w|^2 \lesssim |y||w|^{\frac{\beta+1}{2}} + |y||w|^{\beta}.
    \]
    \[ 
        \frac{(1+C_1|y|)^\alpha }{1+|y|^2}\frac{(1+C_1|w|)^\beta }{1+|w|^2}|y|^2|w|^2 \lesssim (|y|^{\frac{\alpha+1}2} + |y|^\alpha)(|w|^{\frac{\beta+1}{2}} + |w|^\beta).
    \]
    It is easy to show that $|y|^{\frac{\alpha+1}{2}}|w|^{\frac{\beta+1}{2}}$, $|y|^{\frac{\alpha+1}{2}}|w|^\beta$, $|y|^\alpha|w|^{\frac{\beta+1}{2}}$ can be absorbed by
    \[
        |y|^{\alpha+1} + |w|^{\beta+1} + |y|^{\frac{\alpha+1}{2}}|w| + |y||w|^{\frac{\beta+1}{2}} + |y|^\alpha|w|^\beta,
    \]
    since $(\frac{\alpha+1}2, \frac{\beta+1}{2}), (\alpha,\frac{\beta+1}{2}), (\frac{\alpha+1}2,\beta)$ are in the closure of the convex set formed by the five points $(\alpha+1,0),(0,\beta+1),(\alpha,\beta),(\frac{\alpha+1}2,1), (1,\frac{\beta+1}2)$ (See Figure \ref{figure}).
\end{proof}

\begin{figure}[h]
    \centering
    \begin{tikzpicture}

        \def\x{1.7}
        \def\y{1.6}
        \def\z{2}

        \coordinate (A) at (\z*\x+\z,0);
        \coordinate (B) at (\z*\x,\z*\y);
        \coordinate (C) at (0,\z*\y+\z);
        \coordinate (D) at (\z,\y*0.5*\z+0.5*\z);
        \coordinate (E) at (\x*0.5*\z + 0.5*\z,\z);
        \coordinate (F) at (\x*0.5*\z+0.5*\z,\y*\z);
        \coordinate (G) at (\x*\z,\y*0.5*\z+0.5*\z);
        \coordinate (H) at (\x*0.5*\z+0.5*\z,\y*0.5*\z+0.5*\z);
        \draw[thin,dotted] (0,0) grid (\x*\z+\z+1,\y*\z+\z+1);
        \draw[very thick, blue] (A) -- (B) -- (C) -- (D) -- (E) -- cycle;
        \draw[->] (0,0) -- (0,\y*\z + \z+1);
        \draw[->] (0,0) -- (\x*\z + \z+1,0);
        \fill (F) circle [radius = 0.05];
        \fill (G) circle [radius = 0.05];
        \fill (H) circle [radius = 0.05];
        \fill[blue] (A) circle [radius = 0.05];
        \fill[blue] (B) circle [radius = 0.05];
        \fill[blue] (C) circle [radius = 0.05];
        \fill[blue] (D) circle [radius = 0.05];
        \fill[blue] (E) circle [radius = 0.05];
        \node[below] at (H) {B};
        \node[below] at (G) {C};
        \node[left] at (F) {A};
        \node[below] at (A) {$(\alpha+1,0)$};
        \node[right] at (B) {$(\alpha,\beta)$};
        \node[left] at (C) {$(0,\beta+1)$};
        \node[left] at (D) {$(1,\frac{\beta+1}{2})$};
        \node[left] at (E) {$(\frac{\alpha+1}{2},1)$};
        \node[below] at (0,0) {$O$};

        \node[anchor=north east] at (current bounding box.north east) {
        \begin{tabular}{r@{\ }l}
            A: & ($\frac{\alpha+1}2$, $\beta$)\\[-5pt]
            B: & ($\frac{\alpha+1}2$, $\frac{\beta+1}{2}$) \\[-5pt]
            C: & ($\alpha$, $\frac{\beta+1}{2}$)
        \end{tabular}
        };
    \end{tikzpicture}
    \caption{Lemma \ref{Lem2} case (iii)}\label{figure}
\end{figure}

\section{Qualitative Analysis and Existence of Minimizers} \label{sec3}

Fix a minimizer $t_0\in\mathcal{S}\backslash\{\infty\}$, for nonnegative $(u,v)\in\mathscr{D}\backslash {\{(0,0)\}}$, consider the problem
\begin{equation}\label{min pro}
    \inf_{\sigma\in\R,\, 0<w\in\mathcal{M},\,\|w\|_{(s)}=1} {\|\nabla(u-\sigma w)\|_2^2+\|\nabla(v-t_0 \sigma w)\|_2^2},
\end{equation}
where
\begin{equation}\label{s-norm}
        \|w\|_{(s)}:=\left(\int_{\R^N}\frac{|w|^{2^*(s)}}{|x|^s}\,\mathrm{d}x\right)^{\frac 1{2^*(s)}}.
    \end{equation}
It is easy to see there exists minimizer $(\sigma ,w)$. They satisfy
\begin{equation}\label{perp}
    \begin{aligned}
        & \int \nabla(u+t_0v-(1+t_0^2)\sigma w)\cdot \nabla w\,\mathrm{d}x = 0,\\
        & \int \nabla(u+t_0v-(1+t_0^2)\sigma w)\cdot \nabla(\partial_\tau w)\,\mathrm{d}x=0.
    \end{aligned}
\end{equation}
Here for $w=U(k_0,\tau_0)\in\mathcal{M}$,
\[
    \partial_\tau w := \frac{\mathrm{d}}{\mathrm{d}\tau}\Big|_{\tau=\tau_0} U(k_0,\tau).
\]
If $t_0=\infty$, consider instead
\begin{equation}\label{min infty}
    \inf_{\sigma \in\R,\, 0<w\in \mathcal{M},\, \|w\|_{(s)}=1} \|\nabla(v-\sigma w)\|_2^2,
\end{equation}
and the minimizer $(\sigma,w)$ satisfies
\begin{equation}\label{perp_inf}
    \begin{aligned}
        & \int \nabla(v-\sigma w)\cdot \nabla w\,\mathrm{d}x = 0,\\
        & \int \nabla(v-\sigma w)\cdot \nabla(\partial_\tau w)\,\mathrm{d}x=0.
    \end{aligned}
\end{equation}

We first derive another expression of $\sigma $ {when $t_0 \in\mathcal{S}\backslash\{\infty\}$}:
\begin{lemma}\label{k sup}
    Let $(\sigma,w)$ minimize the problem \eqref{min pro}  {with $w>0$}, then
    \begin{equation}
        \sigma  = \sup_{ {0<}w_1\in\mathcal{M},\,\|w_1\|_{(s)}=1} \frac{\int \nabla(u+t_0v)\cdot\nabla w_1\,\mathrm{d}x}{(1+t_0^2)\mu_s}\ge 0.
    \end{equation}
    As a result, for $t_0\in(0,\infty)$, $\sigma=0$ if and only if $u=v=0$; for $t_0=0$, $\sigma =0$ if and only if $u=0$.
\end{lemma}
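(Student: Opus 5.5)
The plan is to expand the objective in \eqref{min pro} and first minimize explicitly in $\sigma$ for each fixed $w$. For fixed $0<w\in\mathcal{M}$ with $\|w\|_{(s)}=1$, write
\[
F(\sigma,w)=\|\nabla u\|_2^2+\|\nabla v\|_2^2-2\sigma\int\nabla(u+t_0v)\cdot\nabla w\,\mathrm{d}x+(1+t_0^2)\sigma^2\|\nabla w\|_2^2 .
\]
Every normalized element of $\mathcal{M}$ is an extremal of $\mu_s$, so $\|\nabla w\|_2^2=\mu_s\|w\|_{(s)}^2=\mu_s$, a constant independent of $w$. Set $A(w_1):=\int\nabla(u+t_0v)\cdot\nabla w_1\,\mathrm{d}x$. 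The quadratic in $\sigma$ is then minimized at $\sigma=A(w)/((1+t_0^2)\mu_s)$, with minimal value
\[
\|\nabla u\|_2^2+\|\nabla v\|_2^2-\frac{A(w)^2}{(1+t_0^2)\mu_s}.
\]
This is consistent with the first relation in \eqref{perp}. Minimizing over $w$ therefore amounts to maximizing $A(w)^2$.

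The main step is to show that the optimal $w$ maximizes $A(w)$ itself, not merely $|A(w)|$, and that $A(w)\ge0$. Since $u,v\ge0$ and $t_0\ge0$, we have $u+t_0v\ge0$. Moreover, $w_1\in\mathcal{M}$ solves $-\Delta w_1=\mu_s|x|^{-s}w_1^{p-1}$, because the normalization $\|w_1\|_{(s)}=1$ fixes the Lagrange multiplier to be $\mu_s$. Integrating by parts gives
\[
A(w_1)=\mu_s\int|x|^{-s}w_1^{p-1}(u+t_0v)\,\mathrm{d}x\ge0
\]
for every admissible $w_1>0$. The integration by parts is legitimate in $D^{1,2}$ because $w_1^{p-1}|x|^{-s}$ lies in the dual space by the Hardy--Sobolev inequality. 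Hence $A(w)^2$ is maximized exactly where $A$ is maximized, so $\sigma=\sup_{w_1}A(w_1)/((1+t_0^2)\mu_s)\ge0$ is attained at the minimizing $w$.

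For the final assertion, if $\sigma=0$ then $A(w_1)=0$ for all admissible $w_1$. Since $w_1>0$ everywhere, the integral formula forces $u+t_0v=0$ almost everywhere. When $t_0>0$, nonnegativity of $u$ and $v$ then gives $u=v=0$. When $t_0=0$ the same argument gives only $u=0$. The converse directions are immediate from the formula for $\sigma$. I expect the only delicate point to be the sign issue, namely ruling out a negative $A(w)$ being optimal, and the integral representation handles it directly.
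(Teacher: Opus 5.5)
Your proposal is correct and follows essentially the paper's argument. The paper compares the minimizer with competitors $(\sigma',w')$, where $\sigma'=A(w')/((1+t_0^2)\mu_s)$ is the optimal coefficient, and obtains $\sigma^2\ge(\sigma')^2$; it then removes the square via $\sigma=\frac{1}{1+t_0^2}\int|x|^{-s}w^{p-1}(u+t_0v)\,\mathrm{d}x\ge0$, exactly as you do by first minimizing in $\sigma$ and then maximizing $A(w)^2$ with $A\ge 0$, and it characterizes $\sigma=0$ through $u+t_0v=0$ in the same way.
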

\begin{proof}
    Since $(\sigma,w)$ minimize the problem \eqref{min pro}, for any other $(\sigma',w')$ with $ 0<w'\in \mathcal{M}, \|w'\|_{(s)}=1$ and $\sigma'\in\R$, we have
    \[
        \|\nabla(u+t_0v-(1+t_0^2)\sigma w)\|_2^2 \le \|\nabla(u+t_0v-(1+t_0^2)\sigma'w')\|_2^2.
    \]
    Let
    \[
        \sigma' = \frac{\int \nabla(u+t_0v)\cdot\nabla w'\,\mathrm{d}x}{(1+t_0^2)\mu_s},
    \]
    then the above inequality becomes
    \[
        \|\nabla(u+t_0v)\|_2^2 - (1+t_0^2)^2\sigma^2\mu_s \le \|\nabla(u+t_0v)\|_2^2 - (1+t_0^2)^2(\sigma')^2\mu_s,
    \]
    i.e. $\sigma^2\ge (\sigma')^2$. Notice that
    \[
        0 = \int \nabla(u+t_0v-(1+t_0^2)\sigma w)\cdot \nabla w\,\mathrm{d}x \implies \sigma = \frac{\int \nabla(u+t_0v)\cdot\nabla w\,\mathrm{d}x}{(1+t_0^2)\mu_s}.
    \]
    By $0<w\in\mathcal{M}, t_0\geq 0$ and $u, v$ are nonnegative,
    \begin{align*}
        \sigma=\frac{\int \nabla(u+t_0v)\cdot\nabla w\,\mathrm{d}x}{(1+t_0^2)\mu_s} ={}& \frac{1}{1+t_0^2}\int \frac{1}{|x|^s}w^{p-1}(u+t_0v)\ge 0.
    \end{align*}
    Similarly, $\sigma'\ge 0$, and so $\sigma \ge \sigma'$. By the arbitrary selection of $w'$, it is known that $\sigma$ attains the supreme \eqref{k sup}, and $\sigma=0$ if and only if $u+t_0v=0$.
\end{proof}
\begin{remark}\label{k>0}
    If $t_0=\infty$ and $(\sigma,w)$ minimize \eqref{min infty}, then
    \[
        \sigma = \sup_{0<w_1\in\mathcal{M},\,\|w_1\|_{(s)}=1}\frac{\int \nabla v\cdot\nabla w_1\,\mathrm{d}x}{\mu_s} {=\frac{1}{\mu_s}\int \frac{1}{|x|^s} w^p v\, \mathrm{d}x}\geq 0,
    \]
    and $\sigma=0$ if and only if $v=0$.
\end{remark}

By Lemma \ref{k sup} and Remark \ref{k>0}, we see that $\sigma=\sigma(u,v,t_0)$  which is determined by $u, v$ and $t_0$. Without causing any misunderstanding, we can omit the symbols $u,v$. And keep in mind that $\sigma(t_0)$ is a functional of $(u,v)$. Now we can state the qualitative stability result:
\begin{theorem}\label{lem 222}
    Assume $(\alpha, \beta, \lambda, \mu) \neq (2, 2, 2\kappa, 2\kappa)$. For any $\epsilon_1 > 0$, there exists $\epsilon_0 > 0$ such that if $\delta(u, v) < \epsilon_0 (\|\nabla u\|_2^2 + \|\nabla v\|_2^2)$, then for some minimizer $t_0 \in \mathcal{S}$ and the corresponding $\sigma = \sigma(t_0)$, the following estimate holds:
    \begin{equation}\label{eta small when delta small}
        \eta < \epsilon_1 \sigma^2,
    \end{equation}
    where for $t_0 \in [0, \infty)$,
    \[
    \eta = \eta(t_0) := \|\nabla (u - \sigma w)\|_2^2 + \|\nabla (v - t_0 \sigma w)\|_2^2,
    \]
    with $(\sigma, w)$  {being a minimizer of} problem \eqref{min pro}; and for $t_0 = \infty$,
    \[
    \eta = \eta(\infty) := \|\nabla u\|_2^2 + \|\nabla (v - \sigma w)\|_2^2,
    \]
    with $(\sigma, w)$  {being a minimizer of} problem \eqref{min infty}.
\end{theorem}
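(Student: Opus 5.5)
We argue by contradiction combined with concentration–compactness. Suppose the conclusion fails: there are $\epsilon_1>0$ and nonnegative $(u_n,v_n)\in\mathscr{D}\setminus\{(0,0)\}$ with $\delta(u_n,v_n)=o(1)(\|\nabla u_n\|_2^2+\|\nabla v_n\|_2^2)$, yet $\eta(t_0)\ge\epsilon_1\sigma(t_0)^2$ for every $t_0\in\mathcal{S}$. By homogeneity we normalize $\|\nabla u_n\|_2^2+\|\nabla v_n\|_2^2=1$, so $(u_n,v_n)$ is a minimizing sequence for $S_{\alpha,\beta,\lambda,\mu}(\R^N)$. Since $\mathcal{S}$ is finite (Proposition \ref{prop:finite-minimizer}, using $(\alpha,\beta,\lambda,\mu)\neq(2,2,2\kappa,2\kappa)$), it suffices to find a subsequence and one $t_0\in\mathcal{S}$ with $\eta_n(t_0)\to0$ while $\sigma_n(t_0)$ stays bounded below.

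\textbf{Step 1: Reduction to the scalar case.} We revisit the proof of Theorem \ref{mth1.2}. Normalize $\int|x|^{-s}(u_n^p+v_n^p)$ and pass to a subsequence along which $\int|x|^{-s}u_n^p\to\delta_*\in[0,1]$.

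If $\delta_*\in(0,1)$, we set $t_n=(\int|x|^{-s}v_n^p/\int|x|^{-s}u_n^p)^{1/p}\to\bar t$ and $w_n=v_n/t_n$. Each inequality in \eqref{S lower bound} must then be asymptotically an equality. This gives:
\begin{itemize}
\item $g(\bar t)=\inf g$, so $\bar t\in\mathcal{S}$;
\item $\delta(u_n)\to0$ and $\delta(w_n)\to0$, after normalization;
\item equality in Young's inequality, i.e. $\int|x|^{-s}\big(\tfrac{\alpha}{p}u_n^p+\tfrac{\beta}{p}w_n^p-u_n^\alpha w_n^\beta\big)\to0$.
\end{itemize}
If $\delta_*=0$ or $1$, then Cases 1 and 2 of that proof show that $\infty\in\mathcal{S}$ or $0\in\mathcal{S}$, respectively. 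The small component is then negligible in $L^p(|x|^{-s})$, and the large component has vanishing scalar deficit.

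\textbf{Step 2: Apply scalar stability.} Theorem \ref{HS-single} yields $\tau_n>0$, $k_n>0$ (positive because the functions are nonnegative) such that $\|\nabla(u_n-U(k_n,\tau_n))\|_2\to0$. It also gives the analogous statement for $w_n$ with some $U(k_n',\tau_n')$. Uniform convexity of $t\mapsto t^p$ forces the Young-equality defect above to vanish only if $u_n-w_n\to0$ in $L^p(|x|^{-s})$. Since two bubbles that are close in $L^p(|x|^{-s})$ have comparable $k$ and $\tau$, we may take the same bubble for both. Hence $(u_n,v_n)$ is $\dot H^1$-close to $(\sigma_n w,\bar t\sigma_n w)$ with $\|w\|_{(s)}=1$ and $\sigma_n$ bounded away from $0$. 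The bound on $\sigma_n$ comes from the normalization together with Lemma \ref{k sup}: $\sigma$ is a supremum over bubbles, so it is at least the value attained at the approximating bubble, and that value stays bounded below.

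\textbf{Step 3: Conclude the contradiction.} Because \eqref{min pro} minimizes over all $\sigma,w$, we get $\eta_n(\bar t)\to0$, while $\sigma_n(\bar t)\gtrsim1$. This contradicts $\eta_n\ge\epsilon_1\sigma_n^2$. The degenerate endpoint cases work the same way. If $\delta_*=1$, the minimizer $0\in\mathcal{S}$ is used and $\|\nabla v_n\|_2\to0$. The vanishing of $\|\nabla v_n\|_2$ follows from the gradient bookkeeping: $\|\nabla u_n\|_2^2\ge\mu_s\|u_n\|_{(s)}^2$, and the total energy equals $\lambda^{-2/p}\mu_s\lambda^{2/p}\|u_n\|_{(s)}^2+o(1)$. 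If $\delta_*=0$, the minimizer $\infty$ is used, with \eqref{min infty} and Remark \ref{k>0}.

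\textbf{Main obstacle.} The delicate part is Step 2's passage from \emph{$L^p(|x|^{-s})$-closeness of $u_n$ and $w_n$ plus separate gradient closeness to bubbles} to \emph{the same bubble up to $o(1)$ in $\dot H^1$}. We plan to handle this using the explicit two-parameter family $\mathcal{M}$. The map $(k,\tau)\mapsto U(k,\tau)$ is a homeomorphism onto its image in $L^p(|x|^{-s})$ modulo nothing, because there is no translation invariance thanks to the weight. Hence parameters converge in ratio, $k_n'/k_n\to1$ and $\tau_n'/\tau_n\to1$. Scaling invariance then reduces this to the continuity of $\tau\mapsto U(1,\tau)$ at $\tau=1$ in $\dot H^1$.
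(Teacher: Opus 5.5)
Your argument is correct. Its skeleton matches the paper's: normalize, argue by contradiction, apply Theorem \ref{HS-single} componentwise, show the two bubbles coincide and the limiting ratio lies in $\mathcal{S}$, and bound $\sigma_n$ below via Lemma \ref{k sup}. What differs is how the middle steps are obtained.

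The paper does not revisit the chain \eqref{S lower bound}. It starts from the pointwise bound $\lambda u^p+\mu v^p+\kappa p\,u^\alpha v^\beta=\big((u^2+v^2)/g(v/u)\big)^{p/2}\le\big((u^2+v^2)/g(\tilde t)\big)^{p/2}$ and applies Minkowski in $L^{p/2}(|x|^{-s}\mathrm{d}x)$. This gives $\delta(u,v)\ge\delta(u)+\delta(v)$, so both scalar deficits vanish at once, with no split according to where the $L^p$ mass sits. It then applies scalar stability to $u_i$ and $v_i$ themselves. Both the scale alignment and $b_0/a_0\in\mathcal{S}$ come from the limit of $\int|x|^{-s}U(1,1)^\alpha U(1,\gamma_i)^\beta\,\mathrm{d}x$. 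If $\gamma_i\to0$ or $\infty$ this integral tends to $0$, which gives a strict inequality contradicting minimality of $g(\tilde t)$. Otherwise, equality in H\"older forces $\gamma_0=1$.

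You instead get $\bar t\in\mathcal{S}$ directly from $g(t_n)\to\inf g$. You align the bubbles through near-equality in Young's inequality, which gives genuine closeness $u_n-w_n\to0$ in $L^p(|x|^{-s}\mathrm{d}x)$ and reduces matching to rigidity of $\mathcal{M}$. The cost is the three-way case split copied from the proof of Theorem \ref{mth1.2}.

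When writing it up, tighten three points:
\begin{itemize}
\item What you actually use is the pointwise bound $\frac{\alpha}{p}a^p+\frac{\beta}{p}b^p-a^\alpha b^\beta\ge c\,|a-b|^p$ for $a,b\ge0$, not ``uniform convexity of $t\mapsto t^p$''. By homogeneity it reduces to $\frac{\alpha}{p}+\frac{\beta}{p}t^p-t^\beta\ge c\,|1-t|^p$. This holds because the left side has a nondegenerate zero at $t=1$, is positive elsewhere, grows like $t^p$, and $p>2$.
\item In your rigidity step, the degenerate directions $\tau_n'/\tau_n\to0,\infty$ must be excluded by weak convergence, exactly as in \eqref{convzero}.
\item Finiteness of $\mathcal{S}$ plays no role. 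A single $t_0\in\mathcal{S}$ violating the contradiction hypothesis already suffices.
\end{itemize}
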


\begin{proof}
    We proceed by contradiction. Suppose the statement is false. Then there exists $\epsilon_1 > 0$ and a sequence $(u_i, v_i)$ with $\|\nabla u_i\|_2^2 + \|\nabla v_i\|_2^2 = 1$ such that
    \begin{align}
        & \delta(u_i, v_i) \le i^{-1}, \label{hypo1} \\
        & \eta_i(\tilde{t}) \ge \epsilon_1 \sigma_i(\tilde{t})^2 \quad \text{for all } \tilde{t} \in \mathcal{S}. \label{hypo2}
    \end{align}
    Here, for simplicity, we denote $\eta(u_i, v_i, \tilde{t})$ and $\sigma(u_i, v_i, \tilde{t})$ by $\eta_i(\tilde{t})$ and $\sigma_i(\tilde{t})$, respectively.

    Since $g(\tilde{t}) = \inf_{t > 0} g(t)$, we have
    \[
        \lambda u_i^p + \mu v_i^p + \kappa p u_i^\alpha v_i^\beta  {=\left( \frac{u_i^2 + v_i^2}{g(\frac{v_i}{u_i})} \right)^{p/2}} \le \left( \frac{u_i^2 + v_i^2}{g(\tilde{t})} \right)^{p/2}.
    \]
    Consequently,
    \begin{align*}
        \delta(u_i, v_i) &= \|\nabla u_i\|_2^2 + \|\nabla v_i\|_2^2 - g(\tilde{t}) \mu_s \left( \int \frac{1}{|x|^s} \left( \lambda u_i^p + \mu v_i^p + \kappa p u_i^\alpha v_i^\beta \right) \, \mathrm{d}x \right)^{2/p} \\
        &\ge \|\nabla u_i\|_2^2 + \|\nabla v_i\|_2^2 - \mu_s \left( \int \frac{1}{|x|^s} (u_i^2 + v_i^2)^{p/2} \, \mathrm{d}x \right)^{2/p} \\
        &\ge \|\nabla u_i\|_2^2 + \|\nabla v_i\|_2^2 - \mu_s \left( \|u_i\|_{(s)}^2 + \|v_i\|_{(s)}^2 \right) \\
        &= \delta(u_i) + \delta(v_i).
    \end{align*}
    The last inequality follows from the elementary estimate: for $p > 2$ and $f, g \in L^p(\R^N)$,
    \[
        \| \sqrt{f^2 + g^2} \|_p^2 = \| f^2 + g^2 \|_{p/2} \le \| f^2 \|_{p/2} + \| g^2 \|_{p/2} = \| f \|_p^2 + \| g \|_p^2.
    \]
    By \eqref{hypo1}, $\delta(u_i), \delta(v_i) \to 0$. Applying the stability result of the Hardy-Sobolev inequality (Theorem~\ref{HS-single}), there exist sequences $\{a_i f_i\}, \{b_i g_i\} \subset \mathcal{M}$ with $a_i, b_i \ge 0$, $f_i > 0$, $g_i > 0$, $\| f_i \|_{(s)} = \| g_i \|_{(s)} = 1$, such that
    \begin{align}
        \| \nabla (u_i - a_i f_i) \|_2 \to 0, \quad \| \nabla (v_i - b_i g_i) \|_2 \to 0. \label{limfg}
    \end{align}
    In particular, $\| \nabla u_i \|_2 \to \| \nabla (a_i f_i) \|_2$, $\| \nabla v_i \|_2 \to \| \nabla (b_i g_i) \|_2$, and
    \begin{equation} \label{limab1}
        \lim_{i \to \infty} (a_i^2 + b_i^2) = \mu_s^{-1},
    \end{equation}
    which implies that $\{a_i\}$ and $\{b_i\}$ are bounded. Thus, up to a subsequence, we may assume
    \begin{equation} \label{convergeab}
        a_i \to a_0, \quad b_i \to b_0 \quad \text{in } \R_{\ge 0}, \quad \text{with} \quad a_0^2 + b_0^2 = \mu_s^{-1}.
    \end{equation}
    By symmetry, assume $a_0 \neq 0$ and thus $a_0\in (0,\infty)$. Write $f_i = U(1, \lambda_i)$, $g_i = U(1, \mu_i)$, and let $\gamma_i = \mu_i / \lambda_i$. Then,
    \begin{align}
        \int \frac{1}{|x|^s} f_i^\alpha g_i^\beta ={}& k_0^p \int \frac{1}{|x|^s} (1+|\lambda_i x|^{2-s})^{-\frac{N-2}{2-s}\alpha}(1+|\mu_ix|^{2-s})^{-\frac{N-2}{2-s}\beta}\lambda_i^{\frac{N-2}{2}\alpha}\mu_i^{\frac{N-2}{2}\beta}\,\mathrm{d}x \notag \\
        ={}& k_0^p  {\int \frac{1}{|x|^s}U(1,1)^\alpha U(1,\gamma_i)^\beta \, \mathrm{d}x}\label{fg1} \\
        ={}& k_0^p {\int \frac{1}{|x|^s}U(1,\tfrac{1}{\gamma_i})^\alpha U(1,1)^\beta \, \mathrm{d}x}\label{fg2}
    \end{align}

    \noindent{\textbf{Case 1: $b_0 > 0$.}}

    If $\{\gamma_i\}$ is unbounded, up to a subsequence, we have that $\frac 1{\gamma_i} \to 0$. Then $U(1, \tfrac{1}{\gamma_i}) \rightharpoonup 0$ in $D_0^{1,2}(\R^N)$, and thus $U(1, \tfrac{1}{\gamma_i})^\alpha \rightharpoonup 0$ in $L^{p/\alpha}(\R^N,\frac{1}{|x|^s}\,\mathrm{d}x)$. Since $U(1,1)^\beta \in L^{p/\beta}(\R^N,\frac{1}{|x|^s}\,\mathrm{d}x)$ (the dual space of $L^{p/\alpha}(\R^N,\frac{1}{|x|^s}\,\mathrm{d}x)$ by $p = \alpha + \beta$), we have
    \begin{equation} \label{convzero}
        \int \frac{1}{|x|^s} U(1, \tfrac{1}{\gamma_i})^\alpha U(1,1)^\beta \, \mathrm{d}x \to 0.
    \end{equation}
    Hence, by \eqref{fg2}, $\int \frac{1}{|x|^s} f_i^\alpha g_i^\beta \, \mathrm{d}x \to 0$. Similarly, if $\gamma_i \to 0$, using \eqref{fg1}, the same conclusion holds. By \eqref{hypo1},
    \begin{equation}\label{eq:20251111-1240}
    \begin{aligned}
        1 - i^{-1} &\le g(\tilde{t}) \mu_s \left( \int \frac{1}{|x|^s} \left( \lambda u_i^p + \mu v_i^p + \kappa p u_i^\alpha v_i^\beta \right) \,\mathrm{d}x \right)^{2/p} \\
        &= g(\tilde{t}) \mu_s \left( \int \frac{1}{|x|^s} \left( \lambda (a_i f_i)^p + \mu (b_i g_i)^p + \kappa p (a_i f_i)^\alpha (b_i g_i)^\beta \right) \,\mathrm{d}x \right)^{2/p} + o(1) \\
        &= g(\tilde{t}) \mu_s \left( \lambda a_i^p + \mu b_i^p + \kappa p a_i^\alpha b_i^\beta \int \frac{1}{|x|^s} f_i^\alpha g_i^\beta \,\mathrm{d}x \right)^{2/p} + o(1).
    \end{aligned}
    \end{equation}
    So, for either $\gamma_i\rightarrow \infty$ or $\gamma_i\rightarrow 0$, by letting $i\rightarrow \infty$ in \eqref{eq:20251111-1240}, we have that
    \begin{align*}
        1\leq{}&g(\tilde{t}) \mu_s \left( \lambda a_0^p + \mu b_0^p\right)^{2/p}\\
        ={}& g(\tilde{t}) \frac{1}{a_0^2+b_0^2}\left( \lambda a_0^p + \mu b_0^p\right)^{2/p}\quad\hbox{(by \eqref{convergeab})}\\
        <{}&g(\tilde{t})\frac{1}{a_0^2+b_0^2}\left( \lambda a_0^p + \mu b_0^p+\kappa p a_0^\alpha b_0^\beta\right)^{2/p} \quad\hbox{(since $a_0>0, b_0>0$)}\\
        ={}&\frac{g(\tilde{t})}{g(b_0/a_0)}\leq 1~\hbox{(since $\tilde{t}\in \mathcal{S}$)},
    \end{align*}
    which is a contradiction.

    Thus, we may assume $\gamma_i \to \gamma_0 \in (0, \infty)$.
    Letting $i \to \infty$ again in \eqref{eq:20251111-1240} and using H\"older's inequality and \eqref{limab1}, we obtain
    \begin{align*}
        1 &\le g(\tilde{t}) \mu_s \left( \lambda a_0^p + \mu b_0^p + \kappa p a_0^\alpha b_0^\beta \int \frac{1}{|x|^s} U(1,1)^\alpha U(1, \gamma_0)^\beta \,\mathrm{d}x \right)^{2/p} \\
        &\le g(\tilde{t}) \mu_s \left( \lambda a_0^p + \mu b_0^p + \kappa p a_0^\alpha b_0^\beta \right)^{2/p} \\
        &= g(\tilde{t}) \frac{1}{a_0^2+b_0^2} \left( \lambda a_0^p + \mu b_0^p + \kappa p a_0^\alpha b_0^\beta \right)^{2/p} \\
        &= \frac{g(\tilde{t})}{g(b_0/a_0)} \le 1.
    \end{align*}
    Hence, equality holds throughout, and
    \begin{equation} \label{limab2}
        g(b_0 / a_0) = g(\tilde{t}), \quad \gamma_0 = 1.
    \end{equation}
    Thus, there exists $t_0 \in \mathcal{S}$ such that $b_0 / a_0 = t_0$. Since $a_0 \in (0,\infty)$, we have $t_0 \in (0, \infty)$, and $(1+t_0^2)a_0^2=a_0^2+b_0^2=\mu_{s}^{-1}$, which gives that
    \begin{equation} \label{a0value}
        a_0 = \left( \frac{1}{(1 + t_0^2) \mu_s} \right)^{1/2}.
    \end{equation}
    
    Now, consider \eqref{hypo2}. We compute
    \begin{align*}
        \eta_i(t_0) &= \| \nabla u_i \|_2^2 + \| \nabla (\sigma_i w) \|_2^2 + \| \nabla v_i \|_2^2 + \| \nabla (t_0 \sigma_i w) \|_2^2 \\
        &\quad - 2 \int \left( \nabla u_i \cdot \nabla (\sigma_i w) + \nabla v_i \cdot \nabla (t_0 \sigma_i w) \right) \,\mathrm{d}x \\
        &= \| \nabla u_i \|_2^2 + \| \nabla v_i \|_2^2 + (1 + t_0^2) \sigma_i^2 \mu_s - 2 \int \nabla (u_i + t_0 v_i) \cdot \nabla (\sigma_i w) \,\mathrm{d}x \\
        &= \| \nabla u_i \|_2^2 + \| \nabla v_i \|_2^2 - (1 + t_0^2) \sigma_i(t_0)^2 \mu_s = 1 - (1 + t_0^2) \sigma_i(t_0)^2 \mu_s.
    \end{align*}
    Then \eqref{hypo2} implies
    \begin{equation} \label{hypo2ver2}
        \sigma_i(t_0)^2 \le \frac{1}{(1 + t_0^2) \mu_s + \epsilon_1}.
    \end{equation}
    Since $\gamma_0 = 1$, we have $U(1, \gamma_i) \rightharpoonup U(1,1)$ in $D_0^{1,2}(\R^N)$, and
    \begin{equation} \label{convint}
        \int \frac{1}{|x|^s} U(1,1)^{p-1} U(1, \gamma_i) \,\mathrm{d}x \to \int \frac{1}{|x|^s} U(1,1)^p \,\mathrm{d}x = 1.
    \end{equation}
    Thus,
    \begin{align*}
        \int \nabla (f_i - g_i) \cdot \nabla f_i \,\mathrm{d}x &= \mu_s \int \frac{1}{|x|^s} f_i^{p-1} (f_i - g_i) \,\mathrm{d}x \\
        &= \mu_s - \mu_s \int \frac{1}{|x|^s} U(1, \lambda_i)^{p-1} U(1, \mu_i) \,\mathrm{d}x \\
        &= \mu_s - \mu_s \int \frac{1}{|x|^s} U(1,1)^{p-1} U(1, \gamma_i) \,\mathrm{d}x \to 0.
    \end{align*}
    By Lemma \ref{k sup},
    \begin{align*}
        \sigma_i(t_0) &\ge \frac{\int \nabla (u_i + t_0 v_i) \cdot \nabla f_i \,\mathrm{d}x}{(1 + t_0^2) \mu_s} \\
        &= \frac{\int \nabla (u_i - a_i f_i + t_0 (v_i - b_i g_i)) \cdot \nabla f_i \,\mathrm{d}x + t_0 b_i \int \nabla (g_i - f_i) \cdot \nabla f_i \,\mathrm{d}x}{(1 + t_0^2) \mu_s} \\
        &\quad + \frac{(a_i + t_0 b_i) \mu_s}{(1 + t_0^2) \mu_s}.
    \end{align*}
    Letting $i \to \infty$ in \eqref{hypo2ver2}, and using \eqref{limfg}, \eqref{limab2}, \eqref{a0value}, and the convergence above, we obtain
    \[
        \left( \frac{1}{(1 + t_0^2) \mu_s + \epsilon_1} \right)^{1/2} \ge \limsup_{i\to \infty }\sigma_i(t_0) \ge \frac{(a_0 + t_0 b_0) \mu_s}{(1 + t_0^2) \mu_s} = a_0 = \left( \frac{1}{(1 + t_0^2) \mu_s} \right)^{1/2},
    \]
    which is a contradiction.

    \noindent\textbf{Case 2: $b_0 = 0$.}

    Then $a_0^2 = \mu_s^{-1}$, and
    \begin{align*}
        0 = \lim_{i \to \infty} \delta(u_i, v_i) &= 1 - g(\tilde{t}) \mu_s (\lambda a_0^p)^{2/p} \\
        &= 1 - \inf_{t \in (0, \infty)} g(t) \cdot \lambda^{2/p}.
    \end{align*}
    Hence, $\inf_{t \in (0, \infty)} g(t) = \lambda^{-2/p} = g(0)$. By \eqref{hypo2} with $\tilde{t} = 0$,
    \[
        \eta_i(0) = \| \nabla u_i \|_2^2 + \| \nabla v_i \|_2^2 - \sigma_i(0)^2 \mu_s \implies \sigma_i(0)^2 \le \frac{1}{\mu_s + \epsilon_1}.
    \]
    On the other hand, by Lemma~\ref{k sup} and \eqref{limfg},
    \begin{align*}
        \sigma_i(0) &\ge \frac{\int \nabla u_i \cdot \nabla f_i \,\mathrm{d}x}{\mu_s} = \frac{\int \nabla (u_i - a_i f_i) \cdot \nabla f_i \,\mathrm{d}x}{\mu_s} + a_i \to a_0 \quad \text{as } i \to \infty.
    \end{align*}
    Therefore,
    \[
        \frac{1}{(\mu_s + \epsilon_1)^{1/2}} \ge \limsup_{i\to \infty}\sigma_i(0) \ge a_0 = \frac{1}{\mu_s^{1/2}},
    \]
    which is again a contradiction.
\end{proof}

\section{ Sharp Quantitative Stability Estimates } \label{sec4}
Fix an $\epsilon_1>0$ to be chosen, and let $\epsilon_0$ be as in Theorem \ref{lem 222}. Noting that $(0,0)$ lies in the closure of $\mathcal{M}'$, which implies that
$$
    \inf_{(U_0,V_0)\in \mathcal{M}'} [\|\nabla(u-U_0)\|_2^2 + \|\nabla(v-V_0)\|_2^2]\leq \|\nabla u\|_2^2 + \|\nabla v\|_2^2,
$$
thus
$$
    \inf_{(U_0,V_0)\in \mathcal{M}'} \frac{\|\nabla(u-U_0)\|_2^2 + \|\nabla(v-V_0)\|_2^2}{\|\nabla u\|_2^2 + \|\nabla v\|_2^2}\leq 1.
$$
So, if $\delta(u,v)\ge \epsilon_0(\|\nabla u\|_2^2 + \|\nabla v\|_2^2)$, then the proof is done by taking $C=\epsilon_0^{-\iota}$.

Otherwise, $\delta(u,v)\leq \epsilon_0(\|\nabla u\|_2^2 + \|\nabla v\|_2^2)$, then by Theorem \ref{lem 222}, there exists $t_0\in\mathcal{S}$ s.t. the minimizer $(\sigma,w)$ of the problem \eqref{min pro} or \eqref{min infty} satisfies $\eta \le \epsilon_1 \sigma^2$, where $\eta=\eta(t_0)$ is defined as in Theorem \ref{lem 222}. Since $t_0=0$ and $t_0=\infty$ are symmetry, we may assume $t_0\in[0,\infty)$ in the following discussion.

Consider the operator $Lu=\frac{-|x|^s\Delta u}{w^{p-2}}$, then
\[
    Lw=\mu_sw,\quad L(\partial_\tau w)=(p-1)\mu_s\partial_\tau w,
\]
and by Lemma \ref{spectrum-HLS}, the first two eigenvalues of $L$ is $\Lambda_1=\mu_s$, $\Lambda_2=(p-1)\mu_s$, whose eigenspaces are spanned by $w$ and $\partial_\tau w$, respectively. We can decompose $u$ and $v$ as the following form:
\begin{equation}\label{decom}
    \begin{aligned}
        & u = aw + r\frac{\partial_\tau w}{k_1} + \sum_{i=3}^\infty r_i\varphi_i,\\
        & v = bw + q\frac{\partial_\tau w}{k_1} + \sum_{i=3}^\infty q_i\varphi_i,
    \end{aligned}
\end{equation}
here $k_1:= \left(\int \frac{1}{|x|^s}w^{p-2}(\partial_\tau w)^2\,\mathrm{d}x\right)^{1/2}$, $\varphi_i$ is the i-th eigenfunction of $L$ satisfying
\[
    L\varphi_i = \Lambda_i \varphi_i,\quad \int \frac{1}{|x|^s} w^{p-2}\varphi_i^2 \,\mathrm{d}x =1,
\]
and $\Lambda_i$ is the i-th eigenvalue of $L$, $\Lambda_i\ge \Lambda_3 > \Lambda_2=(p-1)\mu_s$. By \eqref{perp}, we additionally have
\begin{equation}
    \begin{cases}
        a+t_0b = (1+t_0^2)\sigma , \\
        r+t_0q = 0,
    \end{cases}
\end{equation}
which implies
\begin{equation}\label{abrs}
    t_0(b-t_0\sigma) = -(a-\sigma),\quad t_0q=-r.
\end{equation}
Moreover, since
\begin{align*}
    \|\nabla(u-\sigma w)\|_2^2 ={}& \|\nabla(u-aw + (a-\sigma)w)\|_2^2\\
    ={}& \|\nabla(u-aw)\|_2^2 + (a-\sigma)^2\mu_s \\
    \ge{}& (a-\sigma)^2\mu_s,\\
    \|\nabla(v-t_0\sigma w)\|_2^2 \ge{}& (b-t_0\sigma)^2\mu_s,
\end{align*}
by $\eta\le \epsilon_1 \sigma^2$ and \eqref{abrs}, we obtain that
\begin{equation}\label{ab_close}
    \begin{cases}
        (b/\sigma-t_0)^2 \le \frac{1}{(1+t_0^2)\mu_s}\epsilon_1,\\
        (a/\sigma-1)^2 \le \frac{t_0^2}{(1+t_0^2)\mu_s}\epsilon_1.
    \end{cases}
\end{equation}
This shows that $(a/\sigma,b/\sigma)$ is close to $(1,t_0)$. We mention that $a,b\ge 0$ as well:
\[
    a=\int \frac{1}{|x|^s}w^{p-1}u\,\mathrm{d}x \ge 0,\quad b=\int \frac{1}{|x|^s}w^{p-1}v\,\mathrm{d}x \ge 0.
\]
Denote 
\begin{equation}\label{eq:defi-gamma}
    \gamma := \|\nabla (u-aw)\|_2^2 + \|\nabla(v-bw)\|_2^2,
\end{equation}
then $\gamma \le \eta \le \epsilon_1 \sigma^2$. Use the elementary inequality $x^{2/p}-1 \le \frac 2p(x-1)$ (for $x>0$), we obtain
\begin{align}
    \gamma ={}& \|\nabla u\|^2 + \|\nabla v\|^2 - (a^2+b^2)\mu_s \notag \\
    ={}& \delta(u,v) + g(t_0)\mu_s \left(\int \frac{1}{|x|^s} (\lambda u^p + \mu v^p + \kappa p u^\alpha v^\beta)\,\mathrm{d}x\right)^{2/p} \notag \\
    &- g(b/a)\mu_s (\lambda a^p + \mu b^p + \kappa p a^\alpha b^\beta)^{2/p} \notag \\
    ={}& \delta(u,v)- (g(b/a)-g(t_0))B^{2/p} {\mu_s} \notag\\
    & + g(t_0)\mu_s \left(\left(\int \frac{1}{|x|^s} (\lambda u^p + \mu v^p + \kappa p u^\alpha v^\beta)\,\mathrm{d}x\right)^{2/p} - B^{2/p}\right) \notag \\
    \le{}& \delta(u,v)- (g(b/a)-g(t_0))B^{2/p} {\mu_s}  \notag\\
    & + g(t_0)\mu_s \frac 2p B^{2/p-1} \left( \int \frac{1}{|x|^s} (\lambda u^p + \mu v^p + \kappa p u^\alpha v^\beta)\,\mathrm{d}x - B\right), \label{est*}
\end{align}
where
\[
    B := \int \frac{1}{|x|^s} \big( \lambda (aw)^p + \mu (bw)^p + \kappa p (aw)^\alpha(bw)^\beta \big)\,\mathrm{d}x = \lambda a^p +\mu b^p + \kappa p a^\alpha b^\beta.
\]
By $a+t_0b = (1+t_0^2)\sigma$ and $a,b\ge 0$, we directly obtain $\max\{a,b\}\approx \sigma$ and so $B\approx \sigma^p$.

\begin{lemma}\label{Lemma_t>0}
    Let $a,b$ be given by \eqref{decom} and $\gamma$ be defined by \eqref{eq:defi-gamma}. If $t_0>0$, then for sufficiently small $\epsilon_1$, we have
    \begin{equation}\label{gamma}
        \frac{\gamma}{\sigma^2} + (g(b/a)-g(t_0)) \lesssim \frac{\delta(u,v)}{\sigma^2}.
    \end{equation}
\end{lemma}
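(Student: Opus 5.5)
Starting from \eqref{est*}, the plan is to bound the remaining integral term. Write $u = aw + \phi$, $v = bw + \psi$, where $\phi,\psi$ are orthogonal to $w$ in $D^{1,2}$ and $\gamma = \|\nabla\phi\|_2^2 + \|\nabla\psi\|_2^2$. We expand
\[
    I := \int \frac{1}{|x|^s}\big(\lambda u^p + \mu v^p + \kappa p u^\alpha v^\beta\big)\,\mathrm{d}x - B
\]
pointwise. For the pure powers $(aw+\phi)^p$ and $(bw+\psi)^p$ we use Lemma \ref{lem 1} with $\iota = p > 2$. For the mixed term we use Lemma \ref{Lem2} with $x = aw$, $y = \phi$, $z = bw$, $w \mapsto \psi$, choosing the case according to whether $\alpha,\beta$ lie above or below $2$. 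Since $t_0 > 0$ and \eqref{ab_close} holds, both $a$ and $b$ are comparable to $\sigma$. Hence the negative-power factors $x^{-1}$, $z^{-1}$ that appear in cases (ii) and (iii) are harmless after weighting, because $a^{-1}$ and $b^{-1}$ are of order $\sigma^{-1}$.

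The linear terms all vanish. Integrating against $|x|^{-s}w^{p-1}$ gives $\int |x|^{-s}w^{p-1}\phi = \mu_s^{-1}\int\nabla w\cdot\nabla\phi = 0$, and the same holds for $\psi$. The quadratic part is a quadratic form
\[
    Q(\phi,\psi) = \int \frac{w^{p-2}}{|x|^s}\Big(A_{11}\phi^2 + 2A_{12}\phi\psi + A_{22}\psi^2\Big),
\]
with constant coefficients built from $\lambda a^{p-2}$, $\kappa p a^{\alpha-2}b^\beta$, and so on, plus $O(m)$ errors. The higher-order remainders are bounded by $\gamma^{\theta/2}\sigma^{p-\theta}$ with $\theta > 2$, using Hölder and the Hardy-Sobolev inequality. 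Each such term is therefore $o(\sigma^{p-2}\gamma)$ once $\gamma \le \epsilon_1\sigma^2$ and $\epsilon_1$ is small.

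The key step, and the main obstacle, is a coercivity estimate. We need
\[
    \frac{2}{p}\,g(t_0)\mu_s B^{2/p-1}\,Q(\phi,\psi) \le (1-c)\,\gamma
\]
for some $c > 0$. To get it, decompose $\phi,\psi$ via \eqref{decom}. The $\partial_\tau w$ components satisfy $r + t_0 q = 0$ by \eqref{perp}, and the components along $\varphi_i$ with $i \ge 3$ gain from $\Lambda_i \ge \Lambda_3 > (p-1)\mu_s$.

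Freezing $a/b$ at $1/t_0$ (the error from \eqref{ab_close} is small) and using \eqref{g'=0 extend} to normalize $B \approx \sigma^p(1+t_0^2)(\lambda+\kappa\alpha t_0^\beta)$, the coefficient matrix per eigenmode has eigenvalues tied to $(p-1)$ in the direction $(1,t_0)$. In the transverse direction $(-t_0,1)$ the coefficient is governed by the sign of $g''(t_0)$, via \eqref{g''>=0}. The $\partial_\tau w$ mode lies only in the transverse direction, and there the factor $\Lambda_2 = (p-1)\mu_s$ compares against a quantity strictly smaller than $p-1$. For $i \ge 3$, the strict gap $\Lambda_3 > \Lambda_2$ gives margin in every direction. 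The delicate case is degenerate $g''(t_0) = 0$. There we must check the transverse quadratic weight on $\partial_\tau w$ directly, and if it fails we would instead need to retain the term $g(b/a) - g(t_0)$ and argue separately.

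Granting coercivity, the inequality in \eqref{est*} becomes
\[
    \gamma \le \delta(u,v) - \big(g(b/a)-g(t_0)\big)B^{2/p}\mu_s + (1-c+o(1))\,\gamma.
\]
Rearranging, and using $B^{2/p} \approx \sigma^2$ together with $g(b/a) - g(t_0) \ge 0$, we obtain $c\gamma + \sigma^2\big(g(b/a)-g(t_0)\big) \lesssim \delta(u,v)$. Dividing by $\sigma^2$ gives \eqref{gamma}.
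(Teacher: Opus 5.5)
Your outline follows the paper's route. You expand via Lemma \ref{lem 1} and Lemma \ref{Lem2}, kill the linear terms, bound the remainders by $\sigma^{p-\theta}\gamma^{\theta/2}$ with $\theta>2$, and prove spectral coercivity using $r+t_0q=0$ and $\Lambda_3>\Lambda_2$. However, there is a genuine gap at the step you yourself call the key one: the coercivity bound is only granted, not proved. For degenerate $t_0$ you leave open whether it holds at all. That case cannot be set aside, because the lemma is used precisely at $t_0=\sqrt{(2-\beta)/(2-\alpha)}$ in case \textbf{(II.1)}. Your proposed fallback would also fail. The quantity $g(b/a)-g(t_0)$ depends only on the $w$-coefficients $a,b$ and carries no information about the $\partial_\tau w$-coefficients $r,q$. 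So it could never compensate a failure of coercivity on that mode.

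The missing computation shows there is no difficulty. Rewrite $A_v^0$ with \eqref{g'=0} and normalize with \eqref{g'=0 extend}. The condition on the $\partial_\tau w$ mode, where $q=-r/t_0$, then becomes
\[
(p-1)c_1\ \ge\ (p-1)-(1+t_0^{-2})\frac{\kappa\alpha\beta t_0^{\beta}}{\lambda+\kappa\alpha t_0^{\beta}}+O(m)
= 1-\frac{\alpha(2-\alpha)\kappa t_0^{\beta}+\alpha\beta\kappa t_0^{\beta-2}-(p-2)\lambda}{\lambda+\kappa\alpha t_0^{\beta}}+O(m).
\]
The numerator is nonnegative by \eqref{g''>=0}; it is a positive multiple of $g''(t_0)$, as in the proof of Proposition \ref{prop g(t)}.

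So, in units where the $\partial_\tau w$ mode has energy $\Lambda_2/\mu_s=p-1$, the nonlinear weight on that mode is $1-c\,g''(t_0)\le 1$ for some $c>0$. It is not close to $p-1$. Since $p-1>1$, any $c_1\in(1/(p-1),1)$ works for small $m$, in the degenerate and nondegenerate cases alike.

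The degeneracy $g''(t_0)=0$ affects only the transverse direction of the $w$ mode, i.e. the pair $(a,b)$. That direction is excluded from $\gamma$ and appears in the nonnegative term $g(b/a)-g(t_0)$ on the left. This is where the exponent drops to $1/2$ in Theorem \ref{main thm}, not in this lemma.

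For $i\ge 3$ the form equals
\[
(p-1)(r_i^2+q_i^2)-\frac{\kappa\alpha\beta t_0^{\beta-2}}{\lambda+\kappa\alpha t_0^{\beta}}(t_0r_i-q_i)^2+O(m)(r_i^2+q_i^2).
\]
This gives exactly the margin from $\Lambda_3>(p-1)\mu_s$ that you describe.
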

\begin{proof}
    If $t_0>0$, by \eqref{ab_close}, for sufficiently small $\epsilon_1$,
    \begin{equation}\label{ab_close_1}
        \begin{cases}
            |b/\sigma-t_0| \le \sqrt{\frac{\epsilon_1}{(1+t_0^2)\mu_s}} < \frac 12 t_0,\\
            |a/\sigma-1| \le \sqrt{ \frac{t_0^2\epsilon_1}{(1+t_0^2)\mu_s}} < \frac 12.
        \end{cases}
    \end{equation}
Using Lemma \ref{lem 1}, fix a $m>0$ to be determined later, we have
    \begin{equation}\label{est 0.1}
        \begin{aligned}
            & \int \frac{1}{|x|^s}(u^p - (aw)^p)\,\mathrm{d}x\\
            &\le p \int \frac{1}{|x|^s} (u-aw)(aw)^{p-1}\,\mathrm{d}x \\
            &\quad + \left(\frac{p(p-1)}{2}+m\right)\int \frac{1}{|x|^s}\Big((aw)^{p-2}(u-aw)^2\,\mathrm{d}x + C_1|u-aw|_p^p\Big)\,\mathrm{d}x \\
            &\le \left(\frac{p(p-1)}{2}+m\right)a^{p-2}\int \frac{1}{|x|^s}w^{p-2}(u-aw)^2\,\mathrm{d}x + C_1\mu_s^{-p/2}\|\nabla(u-aw)\|_2^p.
        \end{aligned}
    \end{equation}
    Here by the decomposition \eqref{decom} of $u$, $\int \frac{1}{|x|^s} (u-aw)w^{p-1}\,\mathrm{d}x = 0$. Similarly,
    \begin{equation}\label{est 0.2}
        \begin{aligned}
            & \int \frac{1}{|x|^s}(v^p - (bw)^p)\,\mathrm{d}x\\
            &\le \left(\frac{p(p-1)}{2}+m\right) b^{p-2}\int \frac{1}{|x|^s}w^{p-2}(v-bw)^2\,\mathrm{d}x + C_1  {\mu_s^{-p/2}}\|\nabla(v-bw)\|_2^p.
        \end{aligned}
    \end{equation}
    To deal with the term $u^\alpha v^\beta$, we use Lemma \ref{Lem2} to derive
    \begin{equation}\label{inter term_0}
        \begin{aligned}
            & \int \frac{1}{|x|^s} (u^\alpha v^\beta - (aw)^\alpha(bw)^\beta)\,\mathrm{d}x\\
            &\le \alpha a^{\alpha-1}b^{\beta} \int \frac{1}{|x|^s} w^{p-1}(u-aw)\,\mathrm{d}x + \beta a^{\alpha}b^{\beta-1} \int\frac{1}{|x|^s}w^{p-1}(v-bw)\,\mathrm{d}x\\
            & \quad + \left(\frac{\alpha(\alpha-1)}{2}+m\right)a^{\alpha-2}b^\beta\int \frac{1}{|x|^s}w^{p-2}(u-aw)^2\,\mathrm{d}x \\
            & \quad + \left(\frac{\beta(\beta-1)}{2}+m\right)a^{\alpha}b^{\beta-2}\int \frac{1}{|x|^s} w^{p-2}(v-bw)^2\,\mathrm{d}x\\
            & \quad + \alpha\beta a^{\alpha-1}b^{\beta-1}\int \frac 1{|x|^s}w^{p-2}(u-aw)(v-bw)\,\mathrm{d}x + \kappa p\, C_3 f(\alpha,\beta, u,v).
        \end{aligned}
    \end{equation}
    Here  $f(\alpha,\beta,u,v)$ is the resident term shown in Lemma \ref{lem 1}. In detail, if $\alpha,\beta>2$,
    \begin{align*}
        & f(\alpha,\beta,u,v) \\
        &= \int \frac{1}{|x|^s} \Big((aw)^\alpha |v-bw|^\beta + (bw)^\beta |u-aw|^\alpha+ (aw)^{\alpha-1}(bw)^{\beta-2} |u-aw||v-bw|^2 \\ & \qquad + (aw)^{\alpha-2}(bw)^{\beta-1}|u-aw|^2|v-bw| + |u-aw|^\alpha|v-bw|^\beta \Big)\,\mathrm{d}x \\
        &\le a^\alpha \mu_s^{-\beta/2}\|\nabla(v-bw)\|_2^\beta \\
        & \quad + b^\beta \mu_s^{-\alpha/2} \|\nabla(u-aw)\|_2^\alpha + a^{\alpha-1}b^{\beta-2} \mu_s^{-3/2}\|\nabla(u-aw)\|_2\|\nabla(v-bw)\|^2_2 \\
        & \quad + a^{\alpha-2}b^{\beta-1}\|\nabla(u-aw)\|_2^2\|\nabla(v-bw)\|_2 + \mu_s^{-p/2}\|\nabla(u-aw)\|_2^\alpha\|\nabla(v-bw)\|^\beta_2 \\
        &\lesssim (1+O(1)\epsilon_1)^\alpha \sigma^\alpha\gamma^{\beta/2} + (t_0+O(1)\epsilon)^\beta \sigma^\beta \gamma^{\alpha/2}\\
        & \quad + (1+O(1)\epsilon_1)^{\alpha-1}(t_0+O(1)\epsilon_1)^{\beta-2} \sigma^{p-3}\gamma^{3/2} \\
        & \quad + (1+O(1)\epsilon_1)^{\alpha-2}(t_0+O(1)\epsilon_1)^{\beta-1} {\sigma^{p-3}}\gamma^{3/2} + \gamma^{p/2}\\
        &\lesssim \sigma^\alpha \gamma^{\beta/2} + \sigma^\beta \gamma^{\alpha/2} + \sigma^{p-3}\gamma^{3/2} + \gamma^{p/2}.
    \end{align*}
    Similarly for other cases, we can get

    \noindent $f(\alpha,\beta,u,v)\lesssim $
    \[ \begin{cases}
        \sigma^\alpha \gamma^{\beta/2} + \sigma^\beta \gamma^{\alpha/2} + \sigma^{p-3}\gamma^{3/2} + \gamma^{p/2}, & \text{if}\,\min\{\alpha,\beta\}>2;\\
        \sigma^2\gamma^{\max\{\alpha,\beta\}/2} + \sigma^{\max\{\alpha,\beta\}-1}\gamma^{3/2} + \gamma^{p/2}, & \text{if}\,\min\{\alpha,\beta\}=2<\max\{\alpha,\beta\};\\
        \sigma\gamma^{3/2} + \gamma^{2}, & \text{if}\,\alpha=\beta=2;\\
        \sigma^{\min\{\alpha,\beta\}} \gamma^{\max\{\alpha,\beta\}/2} + \sigma^{p-3}\gamma^{3/2} \\
        \quad + \sigma^{\max\{\alpha,\beta\}-2} \gamma^{1+\min\{\alpha,\beta\}/2} \\
        \quad + \sigma^{\max\{\alpha,\beta\}-1}\gamma^{\frac{\min\{\alpha,\beta\}+1}{2}} + \gamma^{p/2}, & \text{if}\,\min\{\alpha,\beta\}< 2 <\max\{\alpha,\beta\}; \\
        \sigma^{p-3}\gamma^{3/2} + \sigma\gamma^{\frac{\min\{\alpha,\beta\}+1}{2}} + \gamma^{p/2}, & \text{if}\,\min\{\alpha,\beta\} < 2 = \max\{\alpha,\beta\}; \\
        \sigma^{\alpha-1}\gamma^{\frac{\beta+1}{2}} + \sigma^{\beta-1}\gamma^{\frac{\alpha+1}{2}} + \sigma^{\alpha+\frac{\beta}{2} - \frac 32} \gamma^{\frac{\beta+3}{4}}\\
        \quad + \sigma^{\beta+\frac{\alpha}{2} - \frac 32} \gamma^{\frac{\alpha+3}{4}} + \gamma^{p/2}, & \text{if}\,\max\{\alpha,\beta\}<2.
    \end{cases} \]

    By \eqref{est 0.1}-\eqref{inter term_0},
    \begin{align*}
        & \int \frac{1}{|x|^s}(\lambda u^p + \mu v^p + \kappa p u^\alpha  v^\beta )\,\mathrm{d}x - B \\
        &\le k^{p-2} \bigg( A_u\int \frac{1}{|x|^s}w^{p-2}(u-aw)^2\,\mathrm{d}x +  A_v\int \frac{1}{|x|^s}w^{p-2}(v-bw)^2\,\mathrm{d}x\\
        & \qquad + A_{u,v} \int \frac 1{|x|^s}w^{p-2}(u-aw)(v-bw)\,\mathrm{d}x\bigg) \\
        &\quad + C_1\mu_s^{-p/2}\gamma^{p/2} +\kappa p \,C_3 f(\alpha,\beta, u,v).
    \end{align*}
    Where
    \begin{align}
        A_u = {}& \lambda (a/\sigma)^{p-2} \left(\frac{p(p-1)}{2}+m\right) + \kappa p (a/\sigma)^{\alpha-2}(b/\sigma)^{\beta}\left(\frac{\alpha(\alpha-1)}{2}+ m\right), \label{0.A} \\
        A_v = {}& \mu (b/\sigma)^{p-2} \left(\frac{p(p-1)}{2}+m\right)  + \kappa p (a/\sigma)^{\alpha}(b/\sigma)^{\beta-2} \left(\frac{\beta(\beta-1)}{2}+ m\right), \label{0.B} \\
        A_{u,v} = {}& \kappa p \alpha\beta (a/\sigma)^{\alpha-1}(b/\sigma)^{\beta-1}. \label{0.C}
    \end{align}
    Taking all the estimates into \eqref{est*} yields
    \begin{equation}\label{est0.111}
        \begin{aligned}
            & \gamma + (g(b/a)-g(t_0))B^{2/p}  {\mu_s}\\
            &\le \delta(u,v) + g(t_0) \mu_s \frac{2}{p} B^{2/p-1} \bigg(\int \frac{1}{|x|^s}(\lambda u^p + \mu v^p + \kappa p u^\alpha  v^\beta )\,\mathrm{d}x - B \bigg)\\
            &\le \delta(u,v) + g(t_0) \mu_s \frac 2p (B/  {\sigma^p})^{2/p-1} \bigg(A_u\int \frac{1}{|x|^s}w^{p-2}(u-aw)^2\,\mathrm{d}x \\ & \qquad +  A_v\int \frac{1}{|x|^s}w^{p-2}(v-bw)^2\,\mathrm{d}x + A_{u,v} \int \frac 1{|x|^s}w^{p-2}(u-aw)(v-bw)\,\mathrm{d}x\bigg) \\
            & \quad + C \sigma^{2-p}(\gamma^{p/2} + f(\alpha,\beta,u,v)).
        \end{aligned}
    \end{equation}
    Denote
    \begin{align}
        A_u^0 = {}& \lambda \left(\frac{p(p-1)}{2}+m\right) + \kappa p \left(\frac{\alpha(\alpha-1)}{2}+ m\right) t_0^\beta, \label{A} \\
        A_v^0 = {}& \mu t_0^{p-2} \left(\frac{p(p-1)}{2}+m\right)  + \kappa p t_0^{\beta -2} \left(\frac{\beta(\beta-1)}{2}+ m\right), \label{B} \\
        A_{u,v}^0 = {}& \kappa p \alpha \beta t_0^{\beta-1}. \label{C}
    \end{align}
    Since $\frac{x^\iota-1}{x-1}$ is increasing if $\iota\ge 1$ or $\iota\le 0$, and decreasing if $0<\iota<1$, for every $x\in (1/2,3/2)$,
    \begin{equation}\label{0.x^p-1}
        \left|\frac{x^\iota-1}{x-1}\right| \le
        \begin{cases}
            2((3/2)^\iota -1), & \iota\ge 1,\\
            1,& 0<\iota<1,\\
            2^{1-p}-2, & \iota<0.
        \end{cases}
    \end{equation}
    As a result, by \eqref{ab_close_1}, compared \eqref{0.A}-\eqref{0.C} to \eqref{A}-\eqref{C},
    \begin{equation}\label{0.(A')close(A)}
        |A_u-A_u^0| + |A_v-A_v^0| + |A_{u,v}-A_{u,v}^0| \lesssim |a/\sigma-1| + |b/\sigma-t_0| \lesssim \epsilon_1^{1/2}.
    \end{equation}
    Similarly,
    \begin{equation}\label{0.coefficient}
        \begin{aligned}
            & \left|g(t_0)(B/  {\sigma^p})^{2/p-1} - \frac{1+t_0^2}{\lambda + \mu t_0^p + \kappa p t_0^\beta}\right| \\
            &= \frac{1+t_0^2}{\lambda + \mu t_0^p + \kappa p t_0^\beta }\left|\left(\frac{\lambda(a/\sigma)^p + \mu(b/\sigma)^p + \kappa p (a/\sigma)^\alpha(b/\sigma)^\beta}{\lambda+\mu t_0^p +\kappa p t_0^\beta}\right)^{2/p-1}-1\right| \\
            &\lesssim |a/\sigma-1| + |b/\sigma-t_0| \\
            &\lesssim \epsilon_1^{1/2}.
        \end{aligned}
    \end{equation}
    We claim, as will be shown below, that for sufficiently small $m$, there exists a constant $c_1 = c_1(\lambda, \mu, \kappa, \alpha, \beta, N, s, m)\in (0,1)$ such that
    \begin{equation}\label{0 est of 2nd term}
        \begin{aligned}
            c_1\gamma \ge{}& \mu_s \frac 2p \cdot \frac{1+t_0^2}{\lambda + t_0^p + \kappa p t_0^\beta} \bigg(A_u^0\int \frac{1}{|x|^s}w^{p-2}(u-aw)^2\,\mathrm{d}x \\ & \qquad +  A_v^0\int \frac{1}{|x|^s}w^{p-2}(v-bw)^2\,\mathrm{d}x + A_{u,v}^0 \int \frac 1{|x|^s}w^{p-2}(u-aw)(v-bw)\,\mathrm{d}x\bigg).
        \end{aligned}
    \end{equation}
    Taking \eqref{0.(A')close(A)}-\eqref{0 est of 2nd term} into \eqref{est0.111},
    \begin{align*}
        & \gamma + (g(b/a)-g(t_0))B^{2/p} {\mu_s} \\
        &\le \delta(u,v) + \mu_s \frac 2p \cdot \frac{1+t_0^2}{\lambda + t_0^p + \kappa p t_0^\beta} \bigg(A_u^0\int \frac{1}{|x|^s}w^{p-2}(u-aw)^2\,\mathrm{d}x \\
        & \qquad  +A_v^0\int \frac{1}{|x|^s}w^{p-2}(v-bw)^2\,\mathrm{d}x + A_{u,v}^0 \int \frac 1{|x|^s}w^{p-2}(u-aw)(v-bw)\,\mathrm{d}x\bigg) \\
        & \quad + C \epsilon_1^{1/2}\left(\int \frac{1}{|x|^s}\big(w^{p-2}(u-aw)^2 + w^{p-2}(v-bw)^2\big)\,\mathrm{d}x\right) \\
        & \quad + C \sigma^{2-p}(\gamma^{p/2} + f(\alpha,\beta,u,v)) \\
        &\le \delta(u,v) + c_1\gamma + C\epsilon^{1/2}\gamma + C\sigma^{2-p}(\gamma^{p/2}+f(\alpha,\beta,u,v)).
    \end{align*}
    Thus, if $\epsilon_1$ is sufficiently small such that $c_1+C\epsilon_1^{1/2}<1$, we can deduce that
    \[
        \gamma + (g(b/a)-g(t_0))\sigma^{2} \lesssim \delta(u,v) + \sigma^{2-p}(\gamma^{p/2}+f(\alpha,\beta,u,v)).
    \]
    and hence,
    \begin{align*}
        & \frac{\gamma}{\sigma^2} + (g(b/a)-g(t_0))  \lesssim{} \\
        & \begin{cases}
            \frac{\delta(u,v)}{\sigma^2} + \left(\frac{\gamma}{\sigma^2}\right)^{p/2} + \left(\frac{\gamma}{\sigma^2}\right)^{\beta/2} + \left(\frac{\gamma}{\sigma^2}\right)^{\alpha/2} + \left(\frac{\gamma}{\sigma^2}\right)^{3/2},   & \text{if}\, \min\{\alpha,\beta\}>2; \\[5pt]
            \frac{\delta(u,v)}{\sigma^2} + \left(\frac{\gamma}{\sigma^2}\right)^{\max\{\alpha,\beta\}/2} + \left(\frac{\gamma}{\sigma^2}\right)^{3/2} + \left(\frac{\gamma}{\sigma^2}\right)^{p/2}, & \text{if}\, \min\{\alpha,\beta\} = 2 < \max\{\alpha,\beta\};\\[5pt]
            \frac{\delta(u,v)}{\sigma^2} + \left(\frac{\gamma}{\sigma^2}\right)^{3/2} + \left(\frac{\gamma}{\sigma^2}\right)^2,& \text{if}\, \alpha=\beta=2;\\[5pt]
            \frac{\delta(u,v)}{\sigma^2} + \left(\frac{\gamma}{\sigma^2}\right)^{p/2} + \left(\frac{\gamma}{\sigma^2}\right)^{\max\{\alpha,\beta\}/2} + \left(\frac{\gamma}{\sigma^2}\right)^{3/2}\\
            \quad + \left(\frac{\gamma}{\sigma^2}\right)^{\min\{\alpha,\beta\}/2+1} + \left(\frac{\gamma}{\sigma^2}\right)^{\frac{\min\{\alpha,\beta\}+1}{2}},&\text{if}\, \min\{\alpha,\beta\}<2<\max\{\alpha,\beta\}; \\[5pt]
            \frac{\delta(u,v)}{\sigma^2} + \left(\frac{\gamma}{\sigma^2}\right)^{p/2} + \left(\frac{\gamma}{\sigma^2}\right)^{3/2} + \left(\frac{\gamma}{\sigma^2}\right)^{\frac{\min\{\alpha,\beta\}+1}{2}},&   \text{if}\, \min\{\alpha,\beta\}<2=\max\{\alpha,\beta\}; \\[5pt]
            \frac{\delta(u,v)}{\sigma^2} + \left(\frac{\gamma}{\sigma^2}\right)^{p/2} + \left(\frac{\gamma}{\sigma^2}\right)^{\frac{\beta+1}2} + \left(\frac{\gamma}{\sigma^2}\right)^{\frac{\alpha+1}{2}} \\ \quad +\left(\frac{\gamma}{\sigma^2}\right)^{\frac{\beta+3}{4}} + \left(\frac{\gamma}{\sigma^2}\right)^{\frac{\alpha+3}{4}},&\text{if}\, \max\{\alpha,\beta\}<2.
        \end{cases}
    \end{align*}
    In each case, all exponents of $\frac{\gamma}{\sigma^2}$ on the right side are greater than $1$, showing that if $\frac{\gamma}{\sigma^2}\le \epsilon_1$ is sufficiently small, all these terms can be absorbed by the left side term. Thus, to conclude the proof, we only need to prove \eqref{0 est of 2nd term}.
\end{proof}

\begin{proof}[\bf Proof of \eqref{0 est of 2nd term}]
    From \eqref{decom} and \eqref{abrs},
    \begin{align*}
        \int \frac{1}{|x|^s}w^{p-2}(u-aw)^2\,\mathrm{d}x ={}& r^2 + \sum_{i=3}^\infty r_i^2, \\
        \int \frac{1}{|x|^s}w^{p-2}(v-bw)^2\,\mathrm{d}x ={}& t_0^{-2}r^2 + \sum_{i=3}^\infty q_i^2, \\
        \int \frac 1{|x|^s}w^{p-2}(u-aw)(v-bw)\,\mathrm{d}x ={}& r  {q} + \sum_{k=i}^\infty r_iq_i, \\
        ={}& - t_0^{-1}r^2 + \sum_{i=3}^\infty r_iq_i\\
        \|\nabla(u-aw)\|_2^2 ={}& (p-1)\mu_s r^2 + \sum_{i=3}^\infty \Lambda_i r_i^2 \\
        \|\nabla(v-bw)\|_2^2 ={}& (p-1)\mu_s t_0^{-2}r^2 + \sum_{i=3}^\infty \Lambda_i q_i^2.
    \end{align*}
    Thus, \eqref{0 est of 2nd term} equals to
    \begin{equation}
        \begin{aligned}
            & c_1 (1+t_0^{-2})(p-1)r^2 + c_1\sum_{i=3}^\infty \frac{\Lambda_i}{\mu_s}(r_i^2+q_i^2)\\
            & \ge  \frac 2p \cdot \frac{1+t_0^2}{\lambda + \mu t_0^p + \kappa p t_0^\beta} \cdot \\
            & \bigg( (A_u^0+t_0^{-2}A_v^0 - t_0^{-1}A_{u,v}^0)r^2 + \sum_{i=3}^\infty \big(A_u^0r_i^2+A_v^0q_i^2 + A_{u,v}^0r_iq_i\big)\bigg).
        \end{aligned}
    \end{equation}
    This is true if we let
    \begin{align}
        c_1(1+t_0^{-2})(p-1) \ge{}& \frac 2p \cdot \frac{1+t_0^2}{\lambda + \mu t_0^p + \kappa p t_0^\beta}(A_u^0+t_0^{-2}A_v^0 - t_0^{-1}A_{u,v}^0),\label{hope0.1}\\
        c_1(\Lambda_i\mu_s^{-1}(r_i^2+q_i^2)) \ge{}&  \frac 2p \cdot \frac{1+t_0^2}{\lambda + \mu t_0^p + \kappa p t_0^\beta}(A_u^0r_i^2 + A_v^0q_i^2 + A_{u,v}^0r_iq_i),\quad i\ge 3.\label{hope0.2}
    \end{align}
    Then, we have to rigorously compute $A_u^0,A_v^0,A_{u,v}^0$. By \eqref{A},\eqref{B} and \eqref{g'=0},
    \begin{align*}
        A_u^0 ={}& \frac{p(p-1)}{2} \left(\lambda + \kappa\alpha t_0^\beta \right) - \frac 12 \kappa p \alpha\beta t_0^\beta + (\lambda + \kappa p t_0^\beta) m, \\
        ={}& \frac{p(p-1)}{2} \left(\lambda + \kappa\alpha t_0^\beta \right) - \frac 12 \kappa p \alpha\beta t_0^\beta + O(1)m;\\
        A_v^0 ={}& \frac{p(p-1)}{2}\left(\mu t_0^{p-2} + \kappa\beta t_0^{\beta-2}\right) - \frac 12 \kappa p \alpha\beta t_0^{\beta-2} + (\mu t_0^{p-2} + \kappa p t_0^{\beta-2}) m \\
        ={}& \frac{p(p-1)}{2} \left(\lambda + \kappa\alpha t_0^\beta \right) - \frac 12 \kappa p \alpha\beta t_0^{\beta-2} + O(1)m.
    \end{align*}
    Hence, the right side of \eqref{hope0.1} becomes
    \begin{align*}
        & \frac 2p \cdot \frac{1+t_0^2}{\lambda + \mu t_0^p + \kappa p t_0^\beta}(A_u^0+t_0^{-2}A_v^0 - t_0^{-1}A_{u,v}^0) \\
        &= \frac 2p \cdot \frac{1}{\lambda + \kappa\alpha t_0^{\beta}}\cdot \\
        & \quad \left(\frac{p(p-1)}{2} (1+t_0^{-2})\left(\lambda + \kappa\alpha t_0^\beta \right) - \frac 12 \kappa p \alpha\beta t_0^\beta(1+t_0^{-4}+2t_0^{-2}) + O(1)m\right) \\
        &= (1+t_0^{-2})(p-1) - (1+t_0^{-2})^2\frac{\kappa\alpha\beta t_0^{\beta}}{\lambda + \kappa \alpha t_0^\beta} + O(1)m,
    \end{align*}
    and \eqref{hope0.1} equals to
    \begin{equation}\label{0.c11}
        (p-1)c_1 \ge (p-1) - (1+t_0^{-2})\frac{\kappa\alpha\beta t_0^{\beta}}{\lambda + \kappa \alpha t_0^\beta} + O(1)m.
    \end{equation}
    By \eqref{g''>=0},
    \begin{align*}
        (p-1) - (1+t_0^{-2})\frac{\kappa\alpha\beta t_0^{\beta}}{\lambda + \kappa \alpha t_0^\beta} ={}& \frac{(p-2)\lambda + \kappa\alpha t_0^\beta (p-2) - \kappa\alpha\beta t_0^\beta - \kappa\alpha \beta t_0^{\beta-2}}{\lambda + \kappa\alpha t_0^\beta} +1\\
        ={}& \frac{(p-2)\lambda - \alpha(2-\alpha)\kappa t_0^\beta - \kappa t_0^{\beta-2}\alpha\beta }{\lambda + \kappa\alpha t_0^\beta}+1\\ \le{}& 1,
    \end{align*}
    and so, since $p-1>1$, for sufficiently small $m$, the right side of \eqref{0.c11} is less than $p-1$. Similarly, the right side of \eqref{hope0.2} becomes
    \begin{align*}
        & \frac 2p \cdot \frac{1+t_0^2}{\lambda + \mu t_0^p + \kappa p t_0^\beta}(A_u^0r_i^2 + A_v^0q_i^2 + A_{u,v}^0r_iq_i) \\
        &=  \frac 2p \cdot \frac{1}{\lambda + \kappa\alpha t_0^{\beta}}\cdot\\
        &\quad \left( \frac{p(p-1)}{2}\left(\lambda + \kappa\alpha t_0^\beta \right) (r_i^2 + q_i^2) - \frac 12 \kappa p \alpha\beta t_0^{\beta-2}( t_0^2r_i^2 + q_i^2 - 2t_0r_iq_i)\right)\\
        &\quad + O(1)m(r_i^2+q_i^2) \\
        &\le (p-1+O(1)m)(r_i^2+q_i^2),
    \end{align*}
    and combining $\Lambda_i\ge \Lambda_3$, $\forall\, i \ge 3$, \eqref{hope0.2} holds for every $i\ge 3$ if
    \begin{equation}\label{0.c12}
        c_1 \ge \frac{p-1 + O(1)m}{\Lambda_3\mu_s^{-1}}.
    \end{equation}
    Since $\Lambda_3>(p-1)\mu_s$, for sufficiently small $m$, the right side of \eqref{0.c12} is less than $1$, and finally, we can pick some $0<c_1<1$ that satisfies \eqref{0 est of 2nd term}.
\end{proof}

\begin{lemma}\label{Lemma_t=0}
    Let $a,b$ be given by \eqref{decom} and $\gamma$ be defined by \eqref{eq:defi-gamma}. If $t_0=0$, then $a=\sigma$, and for sufficiently small $\epsilon_1$, we still have
    \begin{equation}\label{est2-4}
        \frac{\gamma}{\sigma^2} + (g(b/a)-g(0)) \lesssim \frac{\delta(u,v)}{\sigma^2}.
    \end{equation}
\end{lemma}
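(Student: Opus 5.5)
The plan is to repeat the proof of Lemma \ref{Lemma_t>0} with $t_0=0$, changing only the parts where $b$ is now small. That $a=\sigma$ follows at once from $a+t_0b=(1+t_0^2)\sigma$. From \eqref{ab_close} we get $(b/\sigma)^2\le \epsilon_1/\mu_s$, so $b/\sigma\lesssim\epsilon_1^{1/2}$. With $t_0=0$, \eqref{abrs} forces $r=0$. So $u-aw=\sum_{i\ge3}r_i\varphi_i$ sees only eigenvalues $\Lambda_i\ge\Lambda_3>(p-1)\mu_s$. On the other hand, $v-bw=q\,\partial_\tau w/k_1+\sum_{i\ge3}q_i\varphi_i$ still has a $\partial_\tau w$-component. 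Since $0\in\mathcal{S}$, Proposition \ref{prop g(0)} gives $\beta\ge2$ and $\lambda\ge\mu$. In the nondegenerate case $\beta=2$ it also gives $\lambda\ge 2\kappa$ (equality is allowed, so this can be tight). I start from \eqref{est*} with $g(t_0)=g(0)=\lambda^{-2/p}$ and $B=\lambda\sigma^p+\mu b^p+\kappa p\sigma^\alpha b^\beta=\lambda\sigma^p(1+O(\epsilon_1^{\beta/2}))$.

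Next I expand the three integrals. The $u^p$ term is handled exactly as in \eqref{est 0.1}. For the $v^p$ term I do not expand around $bw$. Instead I use the crude bound $\int|x|^{-s}(v^p-(bw)^p)\lesssim (b+\gamma^{1/2})^{p-1}\gamma^{1/2}$, whose leading part $b^{p-1}\gamma^{1/2}$ carries the extra factor $(b/\sigma)^{p-2}$. This still needs checking: after dividing by $\sigma^{p-2}$ it must be absorbable into $\gamma$ and the left-hand side via Young's inequality, if necessary together with the orthogonality $\int|x|^{-s}w^{p-1}(v-bw)=0$. For the coupling term I apply Lemma \ref{Lem2} with $x=aw$, $y=u-aw$, $z=bw$, $w\mapsto v-bw$, using case (i), or case (ii) with the roles of $\alpha,\beta$ swapped when $\alpha<2$. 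If $b=0$ I use instead the direct bound $|x+y|^\alpha|w'|^\beta$ with $w'=v-bw$. The linear terms vanish by orthogonality.

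Among the quadratic terms, the $u$-coefficient is $\frac{2}{p}\lambda^{-1}\bigl(\lambda\tfrac{p(p-1)}{2}+O(b/\sigma)+O(m)\bigr)=p-1+O(\epsilon_1^{1/2}+m)$. Combined with the spectral gap $\Lambda_3>(p-1)\mu_s$ on $u-aw$, this gives a constant $c_1<1$, as in \eqref{hope0.2}. For $\beta>2$ the $(v-bw)^2$ and mixed coefficients carry positive powers of $b/\sigma$, so they are $O(\epsilon_1^{1/2})$ and are absorbed. The genuinely new case is $\beta=2$. There the $(v-bw)^2$ coefficient becomes $\frac2p\lambda^{-1}\kappa p\,\mu_s\int|x|^{-s}w^{p-2}(v-bw)^2=\frac{2\kappa}{\lambda}\mu_s\int|x|^{-s} w^{p-2}(v-bw)^2$. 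Because $v-bw\perp w$, Lemma \ref{spectrum-HLS} gives $\mu_s\int|x|^{-s}w^{p-2}(v-bw)^2\le(p-1)^{-1}\|\nabla(v-bw)\|_2^2$. The coefficient is therefore at most $\frac{2\kappa}{\lambda(p-1)}\le\frac1{p-1}<1$, using $\lambda\ge2\kappa$. This yields a constant $c_1<1$ even in the degenerate case $\lambda=2\kappa$. The mixed term has coefficient $\propto b/\sigma=O(\epsilon_1^{1/2})$ and is handled by Cauchy--Schwarz.

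The step I expect to be the main obstacle is the bookkeeping of the remainder terms from Lemma \ref{Lem2}. There $z=bw$ is small rather than comparable to $\sigma$, and I must ensure that no term carries a negative power of $b$. For $\beta\ge2$ every term has $z$ to a nonnegative power: for example $x^{\alpha-1}z^{\beta-2}|y||w|^2$ and $x^{-1}z^\beta|y|^{\alpha+1}$. Each is then bounded by $\sigma^{p-2}$ times a power $(\gamma/\sigma^2)^{\theta}$ with $\theta>1$, using $b\lesssim\sigma$. Concluding as in Lemma \ref{Lemma_t>0}, I obtain $\gamma+(g(b/a)-g(0))\sigma^2\lesssim\delta(u,v)+\sigma^2\sum(\gamma/\sigma^2)^{\theta_j}$ with all $\theta_j>1$. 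Since $\gamma/\sigma^2\le\epsilon_1$, the higher-order terms are absorbed, which gives \eqref{est2-4}.
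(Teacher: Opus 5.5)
Your plan is the paper's proof. It uses $a=\sigma$ and $r=0$, the spectral gaps $\Lambda_3$ for $u-\sigma w$ and $(p-1)\mu_s$ for $v-bw\perp w$, the bound $(b/\sigma)^2\le\epsilon_1/\mu_s$, and Lemma \ref{Lem2} for the coupling term. Above all it uses $\lambda\ge 2\kappa$ when $\beta=2$, which gives the coefficient $\frac{2\kappa}{\lambda(p-1)}\le\frac{1}{p-1}<1$ on $\|\nabla(v-bw)\|_2^2$.

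Two fine points here. For $\beta=2$ you must use the ``in particular'' forms of Lemma \ref{Lem2}, where $x^\alpha w^2$ has coefficient exactly $1$; the generic remainder $C_2x^\alpha|w|^\beta$ would be a non-absorbable $\theta=1$ term at $\beta=2$. Your coefficient computation implicitly does this. Also, the case $b=0$ is trivial: $b=\int|x|^{-s}w^{p-1}v\,\mathrm{d}x$ with $v\ge0$ and $w>0$ forces $v\equiv0$.

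The $v^p$ step you left unchecked closes as in the paper. Apply Lemma \ref{lem 1} around $bw$. Orthogonality $\int|x|^{-s}w^{p-1}(v-bw)\,\mathrm{d}x=0$ kills the linear term, which leaves $\lesssim b^{p-2}\gamma+\gamma^{p/2}\lesssim\epsilon_1^{p/2-1}\sigma^{p-2}\gamma+\gamma^{p/2}$, and this is absorbed directly.

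Your crude bound can also be made to work. Young's inequality turns $\sigma^{2-p}b^{p-1}\gamma^{1/2}$ into $\eta\gamma+C_\eta\sigma^2(b/\sigma)^{2p-2}$. Since $p=\alpha+\beta>3$ and $g(t)-g(0)\gtrsim t^4$ near $0$ by Proposition \ref{prop g(0)}, the second term is absorbed by the $g$-term on the left. However, this route needlessly makes the lemma depend on the nondegeneracy of $g$ at $0$, which the expansion avoids.
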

\begin{proof}
    In this case, $0$ is a minimizer of $g(t)$, implying that $\inf_{t\in(0,\infty)}g(t) = \lambda^{-2/p}$. Hence, Proposition \ref{prop g(0)} tells that $\beta \ge 2$. By \eqref{abrs}, $a=\sigma,\, r=0$ and so by the spectrum analysis,
    \begin{equation}\label{est2-1}
        \begin{aligned}
            \int \frac{1}{|x|^s}w^{p-2}(u-\sigma w)^2 \,\mathrm{d}x \le{}& \frac{1}{\Lambda_3} \|\nabla (u-\sigma w)\|_2^2. \\
            \int \frac{1}{|x|^s}w^{p-2}(v-bw)^2 \,\mathrm{d}x \le{}& \frac{1}{(p-1)\mu_s} \|\nabla (v-bw)\|_2^2.
        \end{aligned}
    \end{equation}
    Moreover, \eqref{ab_close} shows that
    \begin{equation}\label{est2-2}
        (b/\sigma)^2 \le \mu_s^{-1}\epsilon_1.
    \end{equation}
    By \eqref{est*},
    \begin{align*}
        \gamma :=& \|\nabla (u-a w)\|_2^2 + \|\nabla(v-bw)\|_2^2 \\
        \le{}& \delta(u,v) - (g(b/\sigma)-g(0))\sigma^2(\lambda + \mu (b/\sigma)^p + \kappa p (b/\sigma)^\beta)^{2/p} \\
        & + \Big[\mu_s \frac 2p \sigma^{2-p}\Big(1 + \frac{\mu}{\lambda} (b/\sigma)^p + \frac{\kappa p}{\lambda} (b/\sigma)^\beta\Big)^{2/p-1}\lambda^{-1}\Big] \\
        & \quad\quad \cdot \Big[ \int \frac{1}{|x|^s} \Big(\lambda (u^p -(\sigma w)^p) + \mu (v^p-(bw)^p) \\
        & \qquad\qquad + \kappa p (u^\alpha v^\beta - (\sigma w)^\alpha(bw)^\beta)\Big) \,\mathrm{d}x \Big].
    \end{align*}
  Noting that $a=\sigma$, which implies that $\int \frac{1}{|x|^s}(u-\sigma w)w^{p-1}\,\mathrm{d}x=0$. So, by Lemma \ref{lem 1} and \eqref{est2-1}-\eqref{est2-2},
    \begin{align*}
        &\int \frac{1}{|x|^s} (u^p -(\sigma w)^p)\,\mathrm{d}x \\
        & \le  \sigma^{p-2} \left(\frac{p(p-1)}{2}+m\right)\int\frac{1}{|x|^s}w^{p-2}(u-\sigma w)^2 + C \int\frac{1}{|x|^s}|u-\sigma w|^p\,\mathrm{d}x \\
        & \le \sigma^{p-2}\left(\frac{p(p-1)}{2\Lambda_3}+\frac{m}{\Lambda_3}\right)\|\nabla (u-\sigma w)\|_2^2 + C \gamma^{p/2}, \\\\
        & \int \frac{1}{|x|^s} (v^p -(bw)^p)\,\mathrm{d}x \\
        & \le  b^{p-2}\left(\frac{p(p-1)}{2}+m\right)\int\frac{1}{|x|^s}w^{p-2}(v-bw)^2 + C \|\nabla(v-bw)\|_2^p \\
        & \le \sigma^{p-2}(\mu_s^{-1}\epsilon_1)^{p/2-1}\left(\frac{p(p-1)}{2}+m\right)\frac{1}{(p-1)\mu_s}\|\nabla(v-bw)\|_2^2 + C\|\nabla(v-bw)\|_2^p \\
        & \le C(\epsilon_1^{p/2-1}\gamma + \gamma^{p/2}).
    \end{align*}
    By Lemma \ref{Lem2} and \eqref{est2-1}-\eqref{est2-2},
    \begin{align*}
        & \int \frac{1}{|x|^s} (u^\alpha v^\beta - (\sigma w)^\alpha(bw)^\beta)\,\mathrm{d}x \\
        &\le \left(\frac{\alpha(\alpha-1)}{2}+m\right) \sigma^{\alpha-2}b^\beta\int \frac{1}{|x|^s}w^{p-2}(u-\sigma w)^2\,\mathrm{d}x\\
        &\quad + \left(\frac{\beta(\beta-1)}{2}+m\right)\sigma^\alpha b^{\beta-2}\int \frac{1}{|x|^s}w^{p-2}(v-bw)^2\,\mathrm{d}x \\
        &\quad + \alpha\beta \sigma^{\alpha-1}b^{\beta-1} \int \frac{1}{|x|^s} w^{p-2}(u-\sigma w)(v-bw)\,\mathrm{d}x + C_3 f(\alpha,\beta,u,v) \\
        &\le \frac{1}{(p-1)\mu_s}\left(\frac{\beta(\beta-1)}{2}+m\right)\sigma^{p-2} (\mu_s^{-1}\epsilon_1)^{\beta/2-1}\|\nabla(v-bw)\|_2^2 + C (\epsilon_1^{\beta/2}+\epsilon_1^{\frac{\beta-1}{2}})\gamma \\
        &\quad + C_3 f(\alpha,\beta,u,v),
    \end{align*}
    where
    \begin{multline*}
        f(\alpha,\beta,u,v) \lesssim{} \\
        \begin{cases}
            \sigma^\alpha \gamma^{\beta/2} + b^\beta \gamma^{\alpha/2} + (\sigma^{\alpha-1}b^{\beta-2}+\sigma^{\alpha-2}b^{\beta-1}) \gamma^{3/2} + \gamma^{p/2}, & \text{if}\, \min\{\alpha,\beta\} >2;\\
            \sigma^2\gamma^{\beta/2} + (kb^{\beta-2}+b^{\beta-1})\gamma^{3/2} + \gamma^{p/2}, & \text{if}\,\alpha=2<\beta; \\
            b^2\gamma^{\alpha/2} + (\sigma^{\alpha-2}b+\sigma^{\alpha-1})\gamma^{3/2} + \gamma^{p/2}, & \text{if}\,\alpha>2 = \beta; \\
            (k+b)\gamma^{3/2} + \gamma^{p/2}, & \text{if}\,\alpha=\beta=2;\\
            \sigma^\alpha \gamma^{\beta/2} + (\sigma^{\alpha-1}b^{\beta-2}+\sigma^{\alpha-2}b^{\beta-1})\gamma^{3/2}\\
             \phantom{\sigma^\alpha \gamma^{\beta/2}} +\sigma^{-1}b^{\beta-1}\gamma^{\frac{\alpha+3}{4}} + \gamma^{p/2}, &\text{if}\, \alpha<2<\beta;\\
            \sigma^{-1}b^2\gamma^{\frac{\alpha+1}2} + \sigma^{\alpha-1}\gamma^{3/2} + (\sigma^{-1}b + 1)\gamma^{p/2}, & \text{if}\,\alpha<2=\beta.
        \end{cases}
    \end{multline*}
    Hence,
    \begin{align}
        & \gamma + (g(b/\sigma)-g(0)) \sigma^2\lambda^{2/p} \notag\\
        &\le \delta(u,v) + \left(1+\frac{\mu}{\lambda}(b/\sigma)^{p} + \frac{\kappa p}{\lambda}(b/\sigma)^{\beta}\right)^{2/p-1} \notag \\
        & \quad \cdot \bigg( \left(\frac{p-1}{\Lambda_3/\mu_s} + \frac{2\mu_s m}{p\Lambda_3}\right) \|\nabla(u-\sigma w)\|_2^2 + \notag \\
        & \qquad \quad (\mu_s^{-1}\epsilon_1)^{\beta/2-1}\frac{\sigma}{(p-1)\lambda}\left(\beta(\beta-1) + 2m\right)\|\nabla(v-bw)\|_2^2\bigg) \notag \\
        & + C_4 \Big((\epsilon_1^{\beta/2}+\epsilon_1^{\frac{\beta-1}{2}} + \epsilon_1^{p/2-1})\gamma + \gamma^{p/2} + f(\alpha,\beta,u,v)\Big). \label{est2-3}
    \end{align}
    Since $p>2$,
    \begin{align*}
        \left(1+\frac{\mu}{\lambda}(b/\sigma)^{p} + \frac{\kappa p}{\lambda}(b/\sigma)^{\beta}\right)^{2/p-1} \le 1.
    \end{align*}
    By $\Lambda_3 > (p-1)\mu_s$,
    \[
        \frac{p-1}{\Lambda_3/\mu_s} + \frac{2\mu_s m}{p\Lambda_3} + C_4 (\epsilon_1^{\beta/2}+\epsilon_1^{\frac{\beta-1}{2}} + \epsilon_1^{p/2-1})< 1
    \]
    as long as $m,\epsilon_1$ is sufficiently small. By Proposition \ref{prop g(0)}, $\beta \ge 2$. If $\beta>2$, then
    \[ 
        (\mu_s^{-1}\epsilon_1)^{\beta/2-1}\frac{\sigma}{(p-1)\lambda}\left(\beta(\beta-1) + 2m\right) + C_4 (\epsilon_1^{\beta/2}+\epsilon_1^{\frac{\beta-1}{2}} + \epsilon_1^{p/2-1}) \lesssim \epsilon_1^{\beta/2-1}< 1
    \]
    provided that $m,\epsilon_1$ is sufficiently small. If $\beta=2$, then $\kappa\le \frac{\lambda}{2}$ and so
    $$
        (\mu_s^{-1}\epsilon_1)^{\beta/2-1}\frac{\sigma}{(p-1)\lambda}\left(\beta(\beta-1) + 2m\right) + C_4 (\epsilon_1^{\beta/2}+\epsilon_1^{\frac{\beta-1}{2}} + \epsilon_1^{p/2-1})
    $$
    $$
        \le \frac{m+1}{p-1} + C \epsilon_1^{\frac{\beta-1}{2}}< 1
    $$
    given that $m,\epsilon_1$ is sufficiently small. In conclusion, we can pick $m,\epsilon_1$ sufficiently small to ensure there exists $c_2\in (0,1)$, such that the right side of \eqref{est2-3} can be controlled by
    \[
        \delta(u,v) + c_2 \gamma + C_4(\gamma^{p/2} + f(\alpha,\beta,u,v)).
    \]
    As a result, by the estimate of $f(\alpha,\beta,u,v)$ and similar deduction as above, we directly obtain \eqref{est2-4}.
\end{proof}

\begin{proof}[\bf Proof of Theorem \ref{main thm}]
    If case \textbf{(I)} happens, we have $g''(\tilde{t})>0$, $\forall\, \tilde{t}\in \mathcal{S}$. Since
    \[
        |b/a-t_0| = \left|\frac{b/\sigma}{a/\sigma}-t_0\right| \lesssim \epsilon_1^{1/2},
    \]
    by $g''(t_0)>0$, for $\epsilon_1$ sufficiently small, we have
    \[
        g(b/a)-g(t_0) = \frac{1}{2}g''(t_0)(b/a-t_0)^2 + o((b/a-t_0)^2) \ge \frac 14 g''(t_0)(b/a-t_0)^2,
    \]
    and so Lemma \ref{Lemma_t>0} (if $t_0>0$) or Lemma \ref{Lemma_t=0} (if $t_0=0$) yields
    \[
        \gamma \lesssim \delta(u,v),\quad (b/a-t_0)^2 \lesssim g(b/a)-g(t_0) \lesssim \frac{\delta(u,v)}{\sigma^2}.
    \]
    Hence,
    \begin{align*}
        \|\nabla(u-\sigma w)\|_2^2 + \|\nabla(v-t_0\sigma w)\|_2^2={}& \gamma + (a-\sigma)^2\mu_s + (b-t_0\sigma)^2\mu_s \\
        ={}& \gamma + \frac{a^2\mu_s}{1+t_0^2}(b/a-t_0)^2 \\
        \lesssim{}& \delta(u,v).
    \end{align*}

    If case \textbf{(II)} happens, by Proposition \ref{prop:finite-minimizer}, we see that $\tilde{\mathcal{S}}$ is a singleton, which equals to one of $\{0\}$, $\{\sqrt{\frac{2-\beta}{2-\alpha}}\}$, or $\{\infty\}$.
    By symmetry, we may assume that $\tilde{\mathcal{S}}=\{\sqrt{\frac{2-\beta}{2-\alpha}}\}$ or $ \tilde{\mathcal{S}}=\{0\}$. 
    
    If $\tilde{\mathcal{S}}=\{\sqrt{\frac{2-\beta}{2-\alpha}}\}$ occurs, let $t_0=\sqrt{\frac{2-\beta}{2-\alpha}}$, by  Proposition \ref{prop g(t)}, $g^{(i)}(t_0)=0$ for $i=1,2,3$ and $g^{(4)}(t_0)>0$. Hence, for $\epsilon_1$ sufficiently small, we have
    \[
        g(b/a)-g(t_0) = \frac{1}{24}g^{(4)}(t_0)(b/a-t_0)^4 + o((b/a-t_0)^4) \ge \frac 1{30} g^{(4)}(t_0)(b/a-t_0)^4.
    \]
    Thus when $\epsilon_1$ is sufficiently small, Lemma \ref{Lemma_t>0} yields
    \[
        \gamma \lesssim \delta(u,v),\quad (b/a-t_0)^4 \lesssim g(b/a)-g(t_0)\lesssim \frac{\delta(u,v)}{\sigma^2}.
    \]
    Hence,
    \begin{align*}
        \|\nabla(u-\sigma w)\|_2^2 + \|\nabla(v-t_0\sigma w)\|_2^2 ={}& \gamma + \frac{a^2\mu_s}{1+t_0^2}(b/a-t_0)^2\\
        \lesssim{}& \delta(u,v) + \sigma^2\sqrt{\frac{\delta(u,v)}{\sigma^2}} \\
        \lesssim{}& \delta(u,v)^{1/2}(\|\nabla u\|_2^2 +\|\nabla v\|_2^2)^{1/2}.
    \end{align*}
    Here we use the fact that
    \begin{align*}
        \sigma = \frac{\int \nabla(u+t_0v)\cdot \nabla w\,\mathrm{d}x}{(1+t_0^2)\mu_s} \lesssim \|\nabla(u+t_0v)\|_2 \lesssim (\|\nabla u\|_2^2+\|\nabla v\|_2^2)^{1/2}.
    \end{align*}

    If $\tilde{\mathcal{S}}=\{0\}$ occurs,  by $g''(0)=0$, Proposition \ref{prop g(0)} gives that $g^{(3)}(0)=0$ and $g^{(4)}(0)>0$. Then for $\epsilon_1$ sufficiently small,
    \[
        g(b/\sigma)-g(0) =\frac 1{24}g^{(4)}(0)(b/\sigma)^4 + o((b/\sigma)^4)\ge \frac 1{30}(b/\sigma)^4.
    \]
    Thus, Lemma \ref{Lemma_t=0} yields
    \[ 
        \gamma\lesssim \delta(u,v),\quad (b/\sigma)^4\lesssim g(b/\sigma)-g(0) \lesssim \frac{\delta(u,v)}{\sigma^2}.
    \]
    Hence,
    \begin{align*}
        & \|\nabla(u-\sigma w)\|_2^2 + \|\nabla v\|_2^2 = \gamma + b^2\mu_s\\ & \lesssim \delta(u,v) + \delta(u,v)^{1/2}k \lesssim \delta(u,v)(\|\nabla u\|_2^2 + \|\nabla v\|_2^2)^{1/2}.
    \end{align*}
    Here we use the fact that
    \[
        \sigma=\frac{\int \nabla u\cdot\nabla w \,\mathrm{d}x}{\mu_s} \lesssim \|\nabla u\|_2 \lesssim (\|\nabla u\|_2^2 + \|\nabla v\|_2^2)^{1/2}.
    \]
    We complete the proof.
\end{proof}

\begin{remark}\label{rmk11}
    The exponents $1/2$ and $1$ in Theorem \ref{main thm} are sharp.
    
    In fact, when $\tilde{\mathcal{S}} \neq \emptyset$, Proposition~\ref{prop:finite-minimizer} implies that $\tilde{\mathcal{S}}$ is a singleton, equal to one of $\{0\}$, $\{\sqrt{\frac{2-\beta}{2-\alpha}}\}$, or $\{\infty\}$.
    Due to the symmetry between the cases $\tilde{\mathcal{S}} = \{0\}$ and $\tilde{\mathcal{S}} = \{\infty\}$, we may assume without loss of generality that either $\tilde{\mathcal{S}} = \{\sqrt{\frac{2-\beta}{2-\alpha}}\}$ or $\tilde{\mathcal{S}} = \{0\}$.

    Define $t_0$ as follows:
    \[ t_0 =
        \begin{cases}
        \sqrt{\frac{2-\beta}{2-\alpha}}, & \text{if $\tilde{\mathcal{S}} = \left\{\sqrt{\frac{2-\beta}{2-\alpha}}\right\}$}, \\
        0, & \text{if $\tilde{\mathcal{S}} = \{0\}$}.
        \end{cases}
    \]
    By Propositions \ref{prop g(t)} and \ref{prop g(0)}, we obtain the expansion:
    \[
        g(t) = g(t_0) + \frac{g^{(4)}(t_0)}{24}(t - t_0)^4 + o((t - t_0)^4).
    \]
    Now, let $(u,v) = (aw, bw)$ with $(a,b) = (1, t_0 + \varepsilon)$, where $w \in \mathcal{M}'$ and $\|w\|_{(s)} = 1$. Define $\sigma = \frac{a + t_0 b}{1 + t_0^2}$. Then, for sufficiently small $\varepsilon$, we have:
    \begin{align*}
        \inf_{(U_0,V_0)\subset \mathcal{M}'} \left[\|\nabla (u - U_0)\|_2^2 + \|\nabla(v - V_0)\|_2^2\right]
        &= (a - \sigma)^2\mu_s + (b - t_0\sigma)^2\mu_s \\
        &= \frac{a^2\mu_s}{1 + t_0^2}(b/a - t_0)^2 \\
        &= O(\varepsilon^2).
    \end{align*}
    On the other hand,
    \begin{align*}
        \delta(u,v) &= (a^2 + b^2)\mu_s - g(t_0)\mu_s(\lambda a^p + \mu b^p + \kappa p a^\alpha b^\beta)^{2/p} \\
        &= \mu_s (g(b/a) - g(t_0))(\lambda a^p + \mu b^p + \kappa p a^\alpha b^\beta)^{2/p} \\
        &= \mu_s(\lambda a^p + \mu b^p + \kappa p a^\alpha b^\beta)^{2/p} \left( \frac{g^{(4)}(t_0)}{24}(b/a - t_0)^4 + o((b/a - t_0)^4) \right) \\
        &= O(\varepsilon^4).
    \end{align*}
    Therefore, the exponent $1/2$ is sharp in case \emph{\textbf{(II)}}.

    For case \emph{\textbf{(I)}}, i.e. $\tilde{\mathcal{S}} = \emptyset$, fix any $t_0 \in \mathcal{S}$. By symmetry, we may assume $0 \leq t_0 < \infty$.
    Then we have the expansion:
    \[
        g(t) = g(t_0) + \frac{g''(t_0)}{2}(t - t_0)^2 + o((t - t_0)^2).
    \]
    Choose $(u,v) = (aw, bw)$ with $(a,b) = (1, t_0 + \varepsilon)$. Following similar computations as before, we obtain:
    \begin{gather*}
        \delta(u,v) = O(\varepsilon^2), \\
        \inf_{(U_0,V_0)\subset\mathcal{M}} \left[\|\nabla (u - U_0)\|_2^2 + \|\nabla(v - V_0)\|_2^2\right] = O(\varepsilon^2).
    \end{gather*}
    This shows that the exponent $1$ is sharp.
\end{remark}

\begin{proof}[\bf Proof of Corollary \ref{main_cor}]
    The key idea is the transformation in \cite{Toshio1997, Lam2017}. For
    $$
        u,v\in \{u \in L^{2^*(s)}(\R^N, |x|^{-(N-s)(1- {\ell})-s}\,\mathrm{d}x):\nabla u \in L^2(\R^N,|x|^{-(N-2)(1- {\ell})}\,\mathrm{d}x)\},
    $$
    let $\tilde{u}(x)=\tilde{u}(r\theta) := {\ell}^{1/2}u(r^{1/ {\ell}}\theta)$, $\tilde{v}(x) :=  {\ell}^{1/2}v(r^{1/ {\ell}}\theta)$, then
    \begin{equation}\label{computation_nabla}
        \begin{aligned}
            & {\int_{\R^N}}  |\nabla \tilde{u}|^2\,\mathrm{d}x\\
            & =  \int_{\S^{N-1}}\int_0^\infty (|\nabla_r\tilde{u}|^2 + r^{-2}|\nabla_\theta \tilde{u}|^2)r^{N-1}\, {\mathrm{d}r\mathrm{d}\theta}\\
            & =   {\ell}\int_{\S^{N-1}}\int_0^\infty ( {\ell}^{-2}r^{2/ {\ell}-2}|\nabla_r u|^2(r^{1/ {\ell}}\theta) +  {r^{2/\ell}}|\nabla_\theta u|^2(r^{1/ {\ell}}\theta)) r^{N-1}\, {\mathrm{d}r\mathrm{d}\theta} \\
            & =  \int_{\S^{N-1}}\int_0^\infty (|\nabla_ru|^2 +  {\ell}^2 r^{-2}|\nabla_\theta u|^2)r^{(N-1) {\ell}+1- {\ell}}\, {\mathrm{d}r\mathrm{d}\theta} \\
            & \le \int_{\S^{N-1}}\int_0^\infty (|\nabla_r u|^2 + r^{-2}|\nabla_\theta u|^2)r^{(N-2)( {\ell}-1)+N-1}\, {\mathrm{d}r\mathrm{d}\theta} \quad \hbox{ {(since $\ell<1$)}}\\
            &  =  {\int_{\R^N}}  |x|^{-(N-2)(1- {\ell})}|\nabla u|^2\,\mathrm{d}x.
        \end{aligned}
    \end{equation}
    And \eqref{computation_nabla} takes  equality   if and only if $u$ is radial. Similarly,
    \begin{align*}
        {\int_{\R^N}}  \frac{1}{|x|^s}|\tilde{u}|^p\,\mathrm{d}x ={}&  {\ell}^{p/2}\int_{\S^{N-1}}\int_0^\infty |u(r^{1/ {\ell}}\theta)|^p r^{-s+N-1}\, {\mathrm{d}r\mathrm{d}\theta} \\
        ={}&  {\ell}^{p/2+1}\int_{\S^{N-1}}\int_0^\infty |u(r\theta)|^p r^{(-s+N-1) {\ell}+ {\ell}-1}\, {\mathrm{d}r\mathrm{d}\theta} \\
        ={}& {\ell}^{p/2+1} {\int_{\R^N}}  |x|^{-(N-s)(1- {\ell})-s}|u|^p\,\mathrm{d}x,\\
         {\int_{\R^N}}  \frac{1}{|x|^s} |\tilde{u}|^\alpha|\tilde{v}|^\beta ={}&  {\ell}^{p/2+1}  {\int_{\R^N}}  |x|^{-(N-s)(1- {\ell})-s}|u|^\alpha|v|^\beta\,\mathrm{d}x.
    \end{align*}
    Hence,
    \begin{equation}\label{delta_compare}
        \begin{aligned}
           & \delta(\tilde{u},\tilde{v}) \\
           & \le{}   {\int_{\R^N}}  |x|^{-(N-2)(1- {\ell})} (|\nabla u|^2 + |\nabla v|^2)\,\mathrm{d}x \\
            & \quad - S_{\alpha,\beta,\lambda,\mu}  {\ell}^{2/p+1} \left( {\int_{\R^N}}  |x|^{-(N-s)(1- {\ell})-s} (\lambda|u|^p + \mu|v|^p +\kappa p |u|^\alpha|v|^\beta) \,\mathrm{d}x \right)^{2/p} \\
            & =  \delta_{{\ell}}(u,v).
        \end{aligned}
    \end{equation}
    By $\delta(\tilde{u},\tilde{v})\ge 0$, we get \eqref{class1}. At the same time, we can derive minimizers of \eqref{class1} through minimizers of \eqref{HS-double}: if $\delta_l(u,v)=0$, then $\delta(\tilde{u},\tilde{v})=0$ and $(\tilde{u},\tilde{v})\in \mathcal{M}'$, implying $(u,v)\in \mathcal{M}_{ {\ell}}'$. On the other hand, if $(u,v)\in \mathcal{M}_{ {\ell}}'$, then they are radial and \eqref{computation_nabla} takes equality, implying $\delta_{ {\ell}}(u,v) = \delta(\tilde{u},\tilde{v})=0$. The proof give the best constant $S_{\alpha,\beta,\lambda,\mu} {\ell}^{2/p+1}$ as well.

    By \eqref{computation_nabla}, we also have
    \begin{equation}\label{computation_nabla_below}
        \begin{aligned}
            {\int_{\R^N}} |\nabla u|^2\,\mathrm{d}x ={}& \int_{\S^{N-1}}\int_0^\infty (|\nabla_ru|^2 +  {\ell}^2 r^{-2}|\nabla_\theta u|^2)r^{(N-1) {\ell}+1- {\ell}}\, {\mathrm{d}r\mathrm{d}\theta} \\
            \ge{}&  {\ell}^2 \int_{\S^{N-1}}\int_0^\infty (|\nabla_r u|^2 + r^{-2}|\nabla_\theta u|^2) r^{(N-2)( {\ell}-1)+N-1}\, {\mathrm{d}r\mathrm{d}\theta} \\
            ={}&  {\ell}^2  {\int_{\R^N}} |x|^{-(N-2)( {\ell}-1)}|\nabla u|^2\,\mathrm{d}x,
        \end{aligned}
    \end{equation}
    and so for Case \textbf{(I)}, by Theorem \ref{main thm} and \eqref{delta_compare}-\eqref{computation_nabla_below},
    \begin{align*}
        & \inf_{(U_0,V_0)\in \mathcal{M}_{ {\ell}}'} {\int_{\R^N}} |x|^{-(N-2)( {\ell}-1)}(|\nabla(u-U_0)|^2+|\nabla(v-V_0)|^2)\,\mathrm{d}x \\
        &\le  {\ell}^{-2} \inf_{(\tilde{U}_0,\tilde{V}_0)\in\mathcal{M}}  {\int_{\R^N}} (|\nabla(\tilde{u}-\tilde{U}_0)|^2 + |\nabla(\tilde{v}-\tilde{V}_0)|^2)\,\mathrm{d}x \\
        &\le  {\ell}^{-2}C\delta(\tilde{u},\tilde{v}) \\
        &\le  {\ell}^{-2}C\delta_{ {\ell}}(u,v).
    \end{align*}
    For Case \textbf{(II)}, by Theorem \ref{main thm} and \eqref{computation_nabla}-\eqref{computation_nabla_below},
    \begin{align*}
        & \inf_{(U_0,V_0)\in \mathcal{M}'_{ {\ell}}} \frac{\int_{\R^N} |x|^{-(N-2)(1- {\ell})}(|\nabla(u-U_0)|^2 + |\nabla(v-V_0)|^2)\,\mathrm{d}x}{\int_{\R^N} |x|^{-(N-2)( {\ell}-1)}(|\nabla u|^2+|\nabla v|^2)\,\mathrm{d}x}\\
        &\le  {\ell}^{-2} \inf_{(\tilde{U}_0,\tilde{V}_0)\in\mathcal{M}'} \frac{\|\nabla(\tilde{u}-\tilde{U}_0)\|_2^2 + \|\nabla(\tilde{v}-\tilde{V}_0)\|_2^2}{\int_{\R^N} |x|^{-(N-2)( {\ell}-1)}(|\nabla u|^2+|\nabla v|^2)\,\mathrm{d}x} \\
        &\le  {\ell}^{-2}C \frac{\delta(\tilde{u},\tilde{v})^{1/2}(\|\nabla \tilde{u}\|_2^2 + \|\nabla\tilde{v}\|_2^2)^{1/2}}{\int_{\R^N} |x|^{-(N-2)( {\ell}-1)}(|\nabla u|^2+|\nabla v|^2)\,\mathrm{d}x} \\
        &\le  {\ell}^{-2}C \left(\frac{\delta_{ {\ell}}(u,v)}{\int_{\R^N} |x|^{-(N-2)( {\ell}-1)}(|\nabla u|^2+|\nabla v|^2)\,\mathrm{d}x}\right)^{1/2}. \qedhere
    \end{align*}
\end{proof}

\vskip0.2in

\begin{center}{DECLARATIONS}\end{center}

\noindent {\bf Data availability statement.} Data sharing not applicable to this article as no data sets were generated
or analysed during the current study.

\noindent{\bf Competing interests. } The authors have no relevant financial or non-financial interests to disclose.

\noindent{\bf Authors contributions. } All authors contributed equally to the writing of this paper. All authors
read and approved the final manuscript.

\end{document}